\documentclass[a4paper,11pt,reqno]{amsart}

\usepackage{graphicx}
\usepackage{amsmath,amsopn,amssymb,amsfonts,stmaryrd}
\usepackage{verbatim}
\usepackage{amsthm}
\usepackage{mathtools}
\usepackage{color}
\usepackage{enumitem}
\usepackage[framemethod=TikZ]{mdframed}
\usepackage{bbm}
\usepackage{mathrsfs}
\usepackage{booktabs}
\usepackage{caption}
\usepackage{bm}
\pagestyle{headings}
\usepackage[top=1.2in, bottom=1.2in, left=0.9in, right=0.9in]{geometry}
\usepackage{makecell}
\usepackage{latexsym}

\usepackage{cleveref}

\theoremstyle{plain}
\newtheorem{theorem}{Theorem}[section]

\newtheorem{conjecture}[theorem]{Conjecture}
\newtheorem{lemma}[theorem]{Lemma}
\newtheorem{proposition}[theorem]{Proposition}
\newtheorem*{remark}{Remark}

\theoremstyle{definition}

\newtheorem*{definition*}{Definition}

\newtheorem*{example*}{Example}
\newtheorem{question}{Question}

\numberwithin{theorem}{section}
\numberwithin{equation}{section}


\usepackage{booktabs} 
\usepackage{array} 
\usepackage{verbatim} 
\usepackage{enumitem}
\usepackage{subfig} 







\newcommand{\Z}{\mathbb{Z}}

\newcommand{\N}{\mathbb{N}}
\newcommand{\R}{\mathbb{R}}
\newcommand{\C}{\mathbb{C}}

\newcommand{\Nc}{\mathcal{N}}

\newcommand{\Cc}{\mathcal{C}}

\newcommand{\Ec}{\mathcal{E}}
\newcommand{\Ic}{\mathcal{I}}
\renewcommand{\Mc}{\mathcal{M}}

\newcommand{\Dc}{\mathcal{D}}

\renewcommand{\b}{\beta}
\renewcommand{\d}{\delta}

\renewcommand{\k}{\kappa}

\newcommand{\g}{\gamma}
\newcommand{\om}{\omega}
\newcommand{\vr}{\varrho}

\newcommand{\z}{\zeta}
\newcommand{\La}{\Lambda}
\newcommand{\De}{\Delta}
\newcommand{\Ga}{\Gamma}

\newcommand{\cb}[1]{\left\{{#1}\right\}}

\newcommand{\crank}{\operatorname{crank}}

\newcommand{\del}{\partial}

\newcommand{\im}{\operatorname{Im}}

\renewcommand{\pmod}[1]{\,\,\left({\rm mod}\,\,{#1}\right)}

\newcommand{\pd}[2]{\frac{\partial{#1}}{\partial{#2}}}

\newcommand{\flo}[1]{\lfloor #1\rfloor}
\newcommand{\pflo}[1]{\left\lfloor #1\right\rfloor}

\newcommand{\pcei}[1]{\left\lceil #1\right\rceil}

\newcommand{\rank}{\operatorname{rank}}
\newcommand{\rb}[1]{\left({#1}\right)}
\newcommand{\re}{\operatorname{Re}}

\newcommand{\sqb}[1]{\left[{#1}\right]}

\newcommand{\vb}[1]{\left| {#1} \right|}

\newcommand{\mc}{\mathcal}
\newcommand{\mf}{\mathfrak}

\allowdisplaybreaks

\newcommand{\bdd}{\begin{center}\begin{tikzcd}}
\newcommand{\bd}{\begin{tikzcd}}
\newcommand{\edd}{\end{tikzcd}\end{center}}
\newcommand{\ed}{\end{tikzcd}}
\newcommand{\bdp}{\begin{center}\begin{tikzpicture}}
\newcommand{\edp}{\end{tikzpicture}\end{center}}
\newcommand{\bi}{\begin{itemize}[leftmargin=*]}
\newcommand{\ei}{\end{itemize}}
\newcommand{\bt}{\begin{tikzpicture}}
\newcommand{\et}{\end{tikzpicture}}
\newcommand{\ba}{\[\begin{aligned}}
\newcommand{\ea}{\end{aligned}\]}
\newcommand{\bp}{\begin{pmatrix}}
\newcommand{\ep}{\end{pmatrix}}
\newcommand{\bsm}{\begin{smallmatrix}}
\newcommand{\esm}{\end{smallmatrix}}
\newcommand{\bv}{\begin{vmatrix}}
\newcommand{\ev}{\end{vmatrix}}
\newcommand{\bb}{\begin{bmatrix}}
\newcommand{\eb}{\end{bmatrix}}
\newcommand{\bB}{\begin{Bmatrix}}
\newcommand{\eB}{\end{Bmatrix}}
\newcommand{\bea}{\begin{enumerate}[leftmargin=*,label=\textnormal{(\alph*)}]}
\newcommand{\ber}{\begin{enumerate}[leftmargin=*,label=\textnormal{(\roman*)}]}
\newcommand{\ben}{\begin{enumerate}[leftmargin=*,label=\textnormal{(\arabic*)}, wide,  labelindent=0pt, labelwidth=*]}
\newcommand{\ee}{\end{enumerate}}

\newcommand{\hlb}{\color{black}}

\makeatletter
\renewcommand{\boxed}[1]{\text{\fboxsep=.2em\fbox{\m@th$\displaystyle#1$}}}
\makeatother

\title[Unimodality of ranks and a proof of Stanton's conjecture]{Unimodality of ranks and a proof of Stanton's conjecture}
\author{Kathrin Bringmann}
\author{Siu Hang Man}
\address{University of Cologne, Department of Mathematics and Computer Science, Weyertal 86-90, 50931 Cologne, Germany}
\email{kbringma@math.uni-koeln.de}
\email{sman1@math.uni-koeln.de}

\address{Charles University, Faculty of Mathematics and Physics, Department of Algebra, Sokolovská 83, 186 00 Praha 8, Czech Republic}
\email{shman@karlin.mff.cuni.cz}

\author{Larry Rolen}
\address{Department of Mathematics, 1420 Stevenson Center, Vanderbilt University, Nashville, TN 37240}
\email{larry.rolen@vanderbilt.edu}
\date{} 

\thanks{The first and the second author have received funding from the European Research Council (ERC) under the European Union’s Horizon 2020 research and innovation programme (grant agreement No. 101001179). The second author was also supported by the Czech Science Foundation GAČR, grant 21-00420M. The third author was supported by a grant from the Simons Foundation (853830) and is also grateful for support from a 2021-2023 Dean's Faculty Fellowship from Vanderbilt University and to the Max Planck Institute for Mathematics in Bonn for its hospitality and financial support.}

\begin{document}
\maketitle
\vspace*{-0.45in}
\begin{abstract}
	Recently, much attention has been given to various inequalities among partition functions. For example, Nicolas, {and later DeSavlvo--Pak,} proved that $p(n)$ is eventually log-concave, and Ji--Zang showed that the cranks are eventually unimodal. This has led to a flurry of recent activity generalizing such results in different directions. At the same time, Stanton recently made deep conjectures on the positivity of certain polynomials associated to ranks and cranks of partitions, with the ultimate goal of pointing the way to ``deeper'' structure refining ranks and cranks. These have been shown to be robust in recent works, which have identified further infinite families of such conjectures in the case of colored partitions. In this paper, we employ the Circle Method to prove unimodality for ranks. As a corollary, we prove Stanton's original conjecture. This points to future study of the positive, integral coefficients Stanton conjectured to exist, hinting at new combinatorial structure yet to be uncovered.
\end{abstract}

\section{Introduction and statement of results}

The study of the sizes and of various inequalities among partitions has a long history, going back to Hardy and Ramanujan's original development of the Circle Method to prove their famous asymptotic for $p(n)$, the number of integer partitions of $n$. Recently, there has been a reinvigoration of this subject in several directions. {Nicolas  proved, and later DeSalvo-Pak independently reproved} \cite{DeSalvoPak,Nicolas} that $p(n)$ is log-concave starting at $n=26$. That is,
\[
p(n)^2\geq p(n-1)p(n+1),\quad \text{ for } n\geq26.
\]
An infinite family of generalizations of log-concavity was conjectured by Chen--Jia--Wang \cite{ChenJiaWang} and proven by Griffin, Ono, Zagier, and the third author \cite{GORZ}. This was in direct analogy with a program of Jensen--Pol\'a on the Riemann Hypothesis, and uncovered a general phenomenon which gave new evidence for it. Another striking multiplicative-additive inequality recently  was proven by Bessenrodt--Ono \cite{BessenrodtOno}
\[
p(a)p(b)\geq p(a+b)\qquad\text{ for }a,b\geq2\ a+b>8.
\]

In this paper we prove further inequalities in the world of partitions, namely for rank counting functions. Although analytic in nature, the motivation behind these inequalities, based on ideas of Stanton, hints at combinatorial structure ``refining'' partition ranks. In conjunction with recent works extending Stanton's conjectures, this suggests that the deeper combinatorial structure Stanton posited is waiting to be discovered. We highlight this in Question~\ref{TheQuestion} below.

To set up this picture, we recall the classical story of ranks and cranks.
Ramanujan \cite{Ramanujan1920,Ramanujan1921} {initiated the arithmetic study of $p(n)$ by stating and proving his famous three congruences}
\begin{align*}
	p(5n+4)&\equiv 0 \pmod 5, & p(7n+5)&\equiv 0 \pmod 7, & p(11n+6)&\equiv 0 \pmod{11}.
\end{align*}
{His proofs relied on $q$-series manipulation. Given the combinatorial nature of the statement, it is natural to ask for a combinatorial proof.} In the search for such a  combinatorial explanation, Dyson \cite{Dyson1944} defined the {\it rank} of a partition to be its largest part minus its number of parts, and conjectured that the partitions of $5n+4$ (resp. $7n+5$) are equidistributed with respect to the rank modulo $5$ (resp. $7$). Dyson's conjecture was proved by Atkin and Swinnerton-Dyer \cite{ASD1954}. Dyson also conjectured the existence of another statistic which he called the crank, which would explain all of Ramanujan's congruences. {More than four decades later}, Andrews and Garvan \cite{AG1988} found such a statistic. For a partition $\lambda$, let $\ell(\lambda)$ be the largest part of $\lambda$, $o(\lambda)$ the number of ones in $\lambda$, and $\mu(\lambda)$ the number of parts in $\lambda$ larger than $o(\lambda)$. The {\it crank} of $\lambda$ is then defined as
\[
	\crank(\lambda) := \begin{cases} \ell(\lambda) & \text{ if } o(\lambda)=0,\\ \mu(\lambda)-o(\lambda) & \text{ if } o(\lambda) > 0.\end{cases}
\]

{Here, we are motivated by a recent search, spearheaded by Stanton, for combinatorial structure refining ranks and cranks.}
While the rank and crank split partitions into classes of equal size, there is no known direct map between these classes. {That is, while ranks and cranks provide a combinatorial explanation for Ramanujan's congruences, the proofs of the rank and crank equidistribution properties are also done via $q$-series, and thus beg for a combinatorial explanation themselves.} In the search of such a map, Stanton proposed several conjectures  in his unpublished notes \cite{Stanton2019} regarding the rank and crank of partitions. Atkin and Swinnerton-Dyer \cite{ASD1954} showed that we have {the following {generating function}}
\begin{equation}\label{eq:rank_zeta}
	R(\z;q) := \sum_{\substack{m\in\Z\\n\ge0}} N(m,n)\z^mq^n =: \sum_{n=0}^\infty \rank_n(\z)q^n = \sum_{n=0}^\infty \frac{q^{n^2}}{(\z q;q)_n\left(\z^{-1}q;q\right)_n}.
\end{equation}
{Here,} $(a;q)_n:=(a)_n:=\prod_{j=0}^{n-1}(1-aq^j)$ ($n\in\N_0\cup\{\infty\}$) is the usual \textit{$q$-Pochhammer symbol}. Meanwhile, Andrews and Garvan \cite{AG1988} proved that the crank generating function
\begin{align}\label{eq:crank_zeta}
	C(\z;q) := \sum_{\substack{m\in\Z\\n\ge0}} M(m,n)\z^mq^n =: \sum_{n=0}^\infty \crank_n(\z)q^n = \prod_{n=1}^\infty \frac{1-q^n}{\left(1-\z q^n\right)\left(1-\z^{-1}q^n\right)}
\end{align}
 gives the {right} crank $M(m,n)$ except for $n=1$ (the correct values are $M(0,1)=1$, and $M(m,1)=0$ for $m\ne0$). 
{We need a  modification of the Laurent polynomials in \eqref{eq:rank_zeta} and \eqref{eq:crank_zeta}.}

\begin{definition*}
	For $n\in\N_0$, $\ell \in \cb{5,7,11}$, and $\beta:= \ell - \frac{\ell^2-1}{24}$,  the {\it{ modified rank}} and {\it{crank}} are
	\begin{align*}
		\rank_{\ell,n}^*(\zeta) &:= \rank_{\ell n+\beta}(\zeta) + \zeta^{\ell n+\beta-2} - \zeta^{\ell n+\beta-1} + \zeta^{-\ell n-\beta+2} - \zeta^{-\ell n-\beta+1},\\
		\crank_{\ell,n}^*(\zeta) &:= \crank_{\ell n+\beta}(\zeta) + \zeta^{\ell n +\beta-\ell} - \zeta^{\ell n+\beta} + \zeta^{-\ell n-\beta+\ell} - \zeta^{-\ell n-\beta}.
	\end{align*}
\end{definition*}

\begin{example*}
	These modifications simply redefine the rank or crank of two partitions each. In the case of the rank, there is always one partition of $n$ of rank $n-1$, namely, $n$. The term $-\z^{\ell n+\b-1}$ deletes this, and assigns it the rank $n-2$. Similarly, the third and fourth monomials in the definition of $\rank_{\ell,n}^*(\z)$ ``reassigns'' the partition $1+1\ldots+1$ with rank $1-n$ to have rank $2-n$. While it may seem strange at first glance, this choice is natural as it turns out it fixes the ``trivial'' lack of unimodality of the number of partitions of $n$ with rank $m$ at the tails. The main result of this paper show that for $n\ge39$, there are no other failures of unimodality, as shown in this table: 
\begin{table}[h!]\label{RanksTable}\caption{Table of $N(m,n)$ for $n=1,2,3,4,5,6,\ldots,38,39$}
\begin{tabular}{ c c c c c c c c c c c }
 &  & & & & 1& & & & & \\ 
 &  & & &1 & 0& 1& & & & \\  
 &  & & 1& 0& 1&0 &1 & & &\\
 &  & 1& 0& 1& 1&1&0 & 1& & \\
  &  1&0& 1& 1& 1&1&1 & 0&1 & \\
   1 &  0&1& 1& 2& 1&2&1 & 1&0 &1 \\
      \vdots &  \vdots&\vdots& \vdots& \vdots& \vdots&\vdots&\vdots & \vdots&\vdots &\vdots \\
      \ldots &  1169&1273& 1331& 1390& 1389&1390&1331 & 1273&1169 &\ldots \\
            \ldots &  1404&1494& 1592& 1627& 1661&1627&1592 & 1494&1404 &\ldots \\

\end{tabular}
\end{table}

\end{example*}

To state Stanton's conjectures on ranks and cranks we let $\Phi_\ell(\z):=1+\z+\ldots+\z^{\ell-1}$ denote the $\ell$-th cyclotomic polynomial, where $\z_\ell:=e^\frac{2\pi i}{\ell}$. 

\begin{conjecture}[Stanton]\label{cnj:Stanton}
	Let $n\in\N_0$.
	\begin{enumerate}[label=\rm{(\arabic*)},wide,labelwidth=!,labelindent=0pt]
		\item The following expressions are Laurent polynomials with non-negative coefficients:
		\[
			\frac{\rank_{5,n}^*(\zeta_\ell)}{\Phi_5(\zeta_\ell)} \quad \text{ and } \quad \frac{\rank_{7,n}^*(\zeta_\ell)}{\Phi_7(\zeta_\ell)}.
		\]
		
		\item The following expressions are Laurent polynomials with non-negative coefficients:
		\[
			\frac{\crank_{5,n}^*(\zeta_\ell)}{\Phi_5(\zeta_\ell)}, \quad \quad \frac{\crank_{7,n}^*(\zeta_\ell)}{\Phi_7(\zeta_\ell)} \quad \text{ and } \quad \frac{\crank_{11,n}^*(\zeta_\ell)}{\Phi_{11}(\zeta_\ell)}. 
		\]
	\end{enumerate}
\end{conjecture}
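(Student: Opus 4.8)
The plan is to reduce Conjecture~\ref{cnj:Stanton} to eventual symmetric unimodality of the modified rank and crank polynomials, and to establish the latter for ranks by the Circle Method. The reduction uses two ingredients. The first is an elementary lemma: if $f(\zeta)=\sum_j a_j\zeta^j$ is a symmetric Laurent polynomial ($a_j=a_{-j}$) that is unimodal ($a_0\ge a_1\ge a_2\ge\cdots\ge 0$) and vanishes at every primitive $\ell$th root of unity for an odd prime $\ell$, then $f(\zeta)/\Phi_\ell(\zeta)$ has non-negative coefficients. To see this, write $f=\Phi_\ell\cdot g$ with $g=\sum_j b_j\zeta^j$; comparing coefficients gives $a_j-a_{j-1}=b_j-b_{j-\ell}$, so (as $g$ has finite support) $b_j=\sum_{i\ge 0}(a_{j-i\ell}-a_{j-i\ell-1})$, which for $j\le 0$ is a sum of non-negative terms, and since $f(1/\zeta)=f(\zeta)$ and $\Phi_\ell(1/\zeta)=\zeta^{1-\ell}\Phi_\ell(\zeta)$ force $b_j=b_{1-\ell-j}$, the bound $b_j\ge 0$ extends to $j\ge 1-\ell$, covering all of $\Z$. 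The second ingredient is that the vanishing hypothesis for $f=\rank_{\ell,n}^*$ (resp.\ $\crank_{\ell,n}^*$) is exactly the classical rank equidistribution of Atkin--Swinnerton-Dyer \cite{ASD1954} ($\ell\in\{5,7\}$) (resp.\ crank equidistribution of Andrews--Garvan \cite{AG1988}, $\ell\in\{5,7,11\}$) of the partitions of $\ell n+\beta$ modulo $\ell$, so that $\rank_{\ell n+\beta}(\zeta_\ell^k)=\tfrac{p(\ell n+\beta)}{\ell}\sum_{r=0}^{\ell-1}\zeta_\ell^{kr}=0$ for $\ell\nmid k$, while the four extra monomials in each modification cancel in pairs at $\zeta_\ell^k$. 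Since $N(m,n)=N(-m,n)$, $M(m,n)=M(-m,n)$, the modifications are $\zeta\mapsto\zeta^{-1}$-invariant, and (as the Example shows) the modification removes precisely the single forced defect of unimodality at the extreme ranks/cranks, the conjecture will follow once $\rank_{\ell,n}^*$ and $\crank_{\ell,n}^*$ are shown to be unimodal for all $n\in\N_0$.

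For cranks this is the Ji--Zang theorem that $(M(m,n))_m$ is unimodal for all large $n$, plus a short analysis of the modified tail and a finite check of the small indices $\ell n+\beta$, giving part~(2). Part~(1) is the new input, and here one starts from the Appell--Lerch expansion behind \eqref{eq:rank_zeta}, which yields for $m\ge 0$ the identity
\[
\sum_{n\ge 0}\bigl(N(m,n)-N(m+1,n)\bigr)q^n=\frac{1}{(q;q)_\infty}\sum_{k\ge 1}(-1)^{k-1}(1-q^k)^2\,q^{\frac{k(3k-1)}{2}+mk}.
\]
For $m$ near the top of its range ($2m+5>n$) only the $k=1$ term contributes, and its $q^n$-coefficient, $p(n-1-m)-2p(n-2-m)+p(n-3-m)$, is a non-negative second difference of the convex function $p$ once $m\le n-3$ (the remaining top values of $m$ being absorbed by the modification); a sharpening --- the $k=1$ term dominates, by elementary estimates on ratios of partition numbers --- extends non-negativity of the coefficients to all $m\ge C\sqrt n$ for a suitable absolute constant $C$. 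For $0\le m\le C\sqrt n$ one runs the Hardy--Ramanujan--Rademacher Circle Method on this series --- equivalently, a two-variable saddle-point analysis of $R(\zeta;q)$ near $\zeta=q=1$ using the (mock) modular transformation of $R$ --- to obtain $N(m,n)-N(m+1,n)=(\text{positive main term})+(\text{error})$, with the main term a negative power of $n$ times $e^{\pi\sqrt{2n/3}}$ (shaped by the $\operatorname{sech}^2$-profile of $N(\cdot,n)$) and a provably smaller error once $n\ge N_0$. Combining the ranges gives $N(m,n)\ge N(m+1,n)$ for all $m\ge 0$ when $n\ge N_0$; the computation behind the table above then settles $n<N_0$ (and records the genuine failure at $n=38$), so that $\rank_n^*$ is unimodal for $n\ge 39$. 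Specializing $n\mapsto\ell n+\beta$ and checking the finitely many small indices directly completes part~(1), and with it Conjecture~\ref{cnj:Stanton}.

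The main obstacle is the Circle Method estimate on $0\le m\le C\sqrt n$: it must be made \emph{uniform in $m$}, since in the transition regime $m\asymp\sqrt n$ the main term already carries the factor $\operatorname{sech}^2\!\left(\tfrac{\pi m}{2\sqrt{6n}}\right)$ and at $m=0$ the leading profile is flat, so the main term there shrinks to order $n^{-5/2}e^{\pi\sqrt{2n/3}}$ and the minor-arc error and non-principal-saddle contributions must be forced below that; moreover, so that the residual finite check is truly finite, the threshold $N_0$ must be made completely explicit. Controlling the dependence of the transformation formula for $R$ on the root of unity at the centre of each major arc, uniformly in $m$, and meshing the analytic range with the elementary one without a gap, is the delicate part; the rest is bookkeeping together with a large but finite numerical verification.
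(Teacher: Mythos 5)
Your outline---reduce to symmetric unimodality of $\rank_{\ell,n}^*$ and $\crank_{\ell,n}^*$ via the divisibility lemma (this is \cite{BGRT2021}, Theorems~1.3 and~1.5), prove unimodality of ranks by combining elementary second-difference estimates on $p(n)$ for $m$ at least of order $\sqrt n$ with a Circle Method estimate uniform in $m$ for smaller $m$, and close the gap with an explicit finite computation---is precisely the structure of the paper's proof. One implementation caveat: your ``equivalently, $\ldots$ using the (mock) modular transformation of $R$'' overstates the parallelism; the paper runs Wright's single-major-arc version of the Circle Method directly on $H_m(q)/(q;q)_\infty$, so that only the classical modularity of $1/(q;q)_\infty$ enters and no mock-modular transformation theory is needed, and the uniform major-arc analysis actually reaches $m \ll n^{3/4}$ (\Cref{thm:m_small}), overlapping generously with the elementary range $m \ge 2\sqrt n$ (\Cref{thm:m_large}), which is what makes the crossover and the explicit $n_0$ tractable.
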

These quotients being Laurent polynomials is equivalent to the equidistribution of ranks and cranks. In \cite{BGRT2021}, the first and the third author, along with Gomez and Tripp reduced the conjecture to a problem on the unimodality of the modified rank and crank. For the crank, Ji and Zang \cite{JZ2021} proved a related result, via a careful analysis of the generating function. 

The main goal of this paper is to prove Conjecture \ref{cnj:Stanton}. To state our methods, we first recall the previous proof of part (1), and then outline our methods for proving part (2). It turns out that the key phenomenon beyond Stanton's Conjecture is the eventual near-unimodality of ranks and cranks. 
{In the case of cranks, let $M(m,n)$ denote the number of partitions of $n$ with crank $m$. Eventual unimodality in this case was recently proven} by Ji and Zang \cite{JZ2021} {(as $M(m,n)=M(-m,n)$, unimodality is the same as monotonicity starting at $m=0$).}

\begin{theorem}[\cite{JZ2021}, Theorem 1.7]\label{thm:JZ}
For $n\ge 44$ and $0\le m \le n-2$, we have
\[
M(m,n) \ge M(m+1,n).
\]
\end{theorem}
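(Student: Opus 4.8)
The plan is to reduce the inequality $M(m,n)\ge M(m+1,n)$ to a statement about the ordinary partition function $p$, to dispose of the ``tail'' (where $m$ is close to $n$) by convexity of $p$, and to attack the ``bulk'' of the range of $m$ by the Circle Method, in the spirit of this paper's treatment of ranks. Extracting the $\zeta^m$-coefficient from \eqref{eq:crank_zeta} by a Kronecker/Appell--Lerch partial fraction identity yields Garvan's formula: for $m\ge0$ (and $n\ge2$, which is all we need),
\[
\sum_{n\ge0}M(m,n)q^n=\frac{1}{(q)_\infty}\sum_{j\ge1}(-1)^{j-1}q^{\binom j2+mj}\bigl(1-q^j\bigr),
\]
equivalently $M(m,n)=\sum_{j\ge1}(-1)^{j-1}\bigl(p(n-\binom j2-mj)-p(n-\binom j2-mj-j)\bigr)$, with the convention $p(k):=0$ for $k<0$. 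Summing the telescoping series over $k\ge m$ collapses the partial sums $M_{\ge}(m,n):=\sum_{k\ge m}M(k,n)$ to $\sum_{j\ge1}(-1)^{j-1}p(n-\binom j2-mj)$, so that
\[
M(m,n)-M(m+1,n)=M_{\ge}(m,n)-2M_{\ge}(m+1,n)+M_{\ge}(m+2,n)=\sum_{j\ge1}(-1)^{j-1}\De_j^2 p\bigl(n-\binom j2-mj\bigr),
\]
where $\De_j^2 p(N):=p(N)-2p(N-j)+p(N-2j)$. Thus the claim is equivalent to the discrete convexity of $m\mapsto M_{\ge}(m,n)$ on $0\le m\le n$. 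This alternating sum has no term-by-term sign pattern forcing positivity; the crux is that the substantial cancellation nets out nonnegative.

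When $m$ is close to $n$, say $n-1-2m<0$, every term with $j\ge2$ vanishes and one is left with $M(m,n)-M(m+1,n)=\De_1^2 p(n-m)=p(n-m)-2p(n-m-1)+p(n-m-2)\ge0$, since $p$ is convex on $\{N\ge2\}$ and $n-m\ge2$ by the hypothesis $m\le n-2$. A further finite range of ``moderately large'' $m$ is handled the same way, as then only $O(1)$ terms of the series survive and $p$ is increasing and convex on all the relevant arguments.

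For $0\le m\le n^{1/2+\eps}$ I would run a Hardy--Ramanujan--Rademacher circle method on the two-variable generating function $C(\zeta;q)$: at a root of unity $\zeta$ the quotient $\tfrac{(q)_\infty}{(\zeta q)_\infty(\zeta^{-1}q)_\infty}$ is, up to an elementary factor, a modular $\eta$-quotient with explicit behavior at every cusp, and a saddle-point analysis performed \emph{uniformly} in the auxiliary parameter $m$ produces an asymptotic expansion of the shape
\[
M(m,n)=\frac{\pi}{4\sqrt6\,\sqrt n}\operatorname{sech}^2\!\left(\frac{\pi m}{2\sqrt{6n}}\right)p(n)\bigl(1+E(m,n)\bigr),
\]
with an effective bound for $E(m,n)$. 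Since $x\mapsto\operatorname{sech}^2 x$ is strictly decreasing on $[0,\infty)$, the main term strictly decreases under $m\mapsto m+1$, by an amount $\asymp n^{-1}p(n)$ times a factor depending only on $m/\sqrt n$; as soon as $E(m,n)$ is of smaller order than this gap we get $M(m,n)>M(m+1,n)$ for all $n\ge N_0$, with $N_0$ effective. The intermediate range of $m$ between this regime and the tail is bridged either by pushing the saddle-point expansion (whose saddle drifts with $m$) or by returning to the exact formula of the first step, where $n-m$ is bounded away from $n$; finally the finitely many pairs $(m,n)$ with $n\le N_0$ are checked directly, and one confirms that $N_0$ may be taken small enough that $n\ge44$ suffices. (An alternative, more elementary route would bypass the circle method and instead bound the alternating tail in the first-step formula by effective estimates for $p$, closer to the approach of Ji and Zang.)

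The hard part is the effective and uniform-in-$m$ control of $E(m,n)$: the difference $M(m,n)-M(m+1,n)$ is a full power of $n$ smaller than $M(m,n)$ itself, so the asymptotic expansion must be carried to relative precision $o(n^{-1})$ — meaning several terms of the Rademacher expansion estimated uniformly as $m$ grows toward and past $\sqrt n$, together with a clean gluing to the elementary range. This uniform effective analysis is precisely what the present paper develops for ranks.
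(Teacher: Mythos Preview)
The paper does not prove this theorem at all; it is quoted verbatim from Ji--Zang \cite{JZ2021} as an external input. The paper even remarks that Ji--Zang's argument proceeds by ``the judicious use of clever $q$-series manipulations and comparisons'' and that ``these methods do not, however, generalize to other situations'', which is exactly why the paper then develops the Circle Method machinery for the \emph{rank} case.

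Your proposal transplants the paper's rank strategy to the crank: Garvan's partition-function identity for $M(m,n)-M(m+1,n)$, convexity of $p$ for the tail, a uniform-in-$m$ saddle-point expansion with main term $\operatorname{sech}^2$ for the bulk, and a finite check. Architecturally this is sound and is a genuinely different route from Ji--Zang's $q$-series approach. But what you have written is a programme, not a proof. The entire difficulty is the one you flag in your last paragraph: making the error $E(m,n)$ effective and small enough, uniformly in $m$, that the resulting $N_0$ is computationally feasible. In the rank case the paper spends all of Sections~\ref{section:cm}--\ref{section:num_com} on precisely this, and even then only reaches $N_0\approx 9\cdot 10^5$, after which a substantial computer verification closes the gap down to $n=39$. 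Your sketch contains none of this work for the crank, and nothing in it suggests you would reach $N_0$ anywhere near $44$; you would need the analogues of Lemmas~\ref{lem:ELmz_bound}--\ref{lem:M5_split} with all constants tracked, plus the numerical optimisation of Section~\ref{section:num_com}, plus the large-$m$ case-analysis of Theorem~\ref{thm:m_large}. By contrast, Ji--Zang's combinatorial approach delivers the sharp threshold $n\ge 44$ directly, without any asymptotic analysis or computer search --- which is what makes their method better suited to this particular statement, even if it is less portable.
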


Using \Cref{thm:JZ}, the first  and the third author, Gomez, and Tripp \cite{BGRT2021} established Stanton's conjecture for the crank.

\begin{theorem}[\cite{BGRT2021}, Theorem 1.3]
	\Cref{cnj:Stanton} {\rm(2)} is true.
\end{theorem}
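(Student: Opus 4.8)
The plan is to deduce part~(2) of \Cref{cnj:Stanton} from the near-unimodality input \Cref{thm:JZ}, an elementary structural reduction, and a finite verification. Throughout fix $\ell\in\{5,7,11\}$, write $N:=\ell n+\beta$, and let $M_{\ell,n}^*(m)$ denote the coefficient of $\zeta^m$ in $\crank_{\ell,n}^*(\zeta)$. The first ingredient is the reduction of \cite{BGRT2021}: if a Laurent polynomial $P(\zeta)=\sum_m a_m\zeta^m$ is \emph{symmetric} ($a_m=a_{-m}$), \emph{unimodal} ($a_0\ge a_1\ge a_2\ge\cdots$), and divisible by $\Phi_\ell$, then $P(\zeta)/\Phi_\ell(\zeta)$ has non-negative coefficients. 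I would prove this by writing $P=\Phi_\ell\cdot Q$ with $Q(\zeta)=\sum_k b_k\zeta^k$ and exploiting the identity $a_m-a_{m-1}=b_m-b_{m-\ell}$: it forces the top $\ell$ coefficients of $Q$ to equal the consecutive differences $a_M,\,a_{M-1}-a_M,\dots$ (with $M$ the top degree of $P$), which are non-negative by unimodality, and it shows $b_{k-\ell}\ge b_k$ for every $k\ge1$; combining the peak at the top with the symmetry of $Q$ then yields $b_k\ge0$ for all $k$. Hence it suffices to check that $\crank_{\ell,n}^*$ is symmetric, divisible by $\Phi_\ell$, and unimodal.

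Symmetry and divisibility are immediate. Symmetry of $\crank_{\ell,n}^*$ under $\zeta\mapsto\zeta^{-1}$ follows from $M(m,N)=M(-m,N)$ and the obvious symmetry of the correction terms $\zeta^{N-\ell}-\zeta^N+\zeta^{-(N-\ell)}-\zeta^{-N}$. Divisibility by $\Phi_\ell$ holds because the crank splits the partitions of $N=\ell n+\beta$ into $\ell$ classes of equal size modulo $\ell$ — precisely the Andrews--Garvan refinement \cite{AG1988} of $p(\ell n+\beta)\equiv0\pmod\ell$ — so that $\crank_N(\zeta_\ell)=0$; the correction terms also vanish at $\zeta=\zeta_\ell$ since $\zeta_\ell^\ell=1$, and $\Phi_\ell$ is the minimal polynomial of $\zeta_\ell$ over $\Q$.

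The crux is unimodality of $\crank_{\ell,n}^*$, i.e.\ $M_{\ell,n}^*(m)\ge M_{\ell,n}^*(m+1)$ for $m\ge0$; this is where \Cref{thm:JZ} enters. The correction alters only the coefficients at $m\in\{N-\ell,\,N\}$ and their negatives, and elementarily $M(N,N)=1$ and $M(N-1,N)=0$ (the one-part partition is the unique partition of $N$ of crank $\ge N-1$), so $M_{\ell,n}^*(N)=0=M_{\ell,n}^*(N-1)$ and the desired inequality is trivial for $m\ge N-1$. For $0\le m\le N-\ell-2$ and for $N-\ell+1\le m\le N-3$ it reads $M(m,N)\ge M(m+1,N)$, which \Cref{thm:JZ} provides once $N\ge44$; for $m=N-\ell$ it reads $M(N-\ell,N)+1\ge M(N-\ell+1,N)$, which again follows from \Cref{thm:JZ} (even after discarding the $+1$). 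The only remaining case is $m=N-\ell-1$, which requires the strict inequality $M(N-\ell-1,N)\ge M(N-\ell,N)+1$: for $N$ above an absolute bound (in particular for all $N\ge44$) the values $M(N-j,N)$ with $0\le j\le \ell+1$ are given by a fixed local formula — obtained by enumerating the finitely many triples of partitions contributing to $[\zeta^{N-j}q^N]C(\zeta;q)$ — so this boils down to a single numerical inequality for each $\ell\in\{5,7,11\}$.

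It remains to treat the finitely many $n$ with $\ell n+\beta<44$ (namely $n\le 7$ for $\ell=5$, $n\le 5$ for $\ell=7$, and $n\le 3$ for $\ell=11$); for these one simply computes the finite Laurent polynomial $\crank_{\ell,n}^*(\zeta)/\Phi_\ell(\zeta)$ and checks that its coefficients are non-negative. The genuine obstacle in this argument is \Cref{thm:JZ} itself — the eventual near-unimodality of the crank, the deep analytic input of \cite{JZ2021}, which we take as given; granting it, the only delicate bookkeeping point is the junction $m=N-\ell-1$, where \Cref{thm:JZ} yields only a weak inequality and must be reinforced by the explicit extremal behavior of the crank.
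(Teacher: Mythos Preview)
Your proposal is correct and follows precisely the route the paper indicates: the paper does not reprove this theorem but cites \cite{BGRT2021}, where the result is obtained by combining the Ji--Zang unimodality input \Cref{thm:JZ} with the elementary reduction (symmetric $+$ unimodal $+$ divisible by $\Phi_\ell$ $\Rightarrow$ non-negative quotient) and a finite check---exactly your outline. Two very minor remarks: in your reduction sketch, $Q=P/\Phi_\ell$ is not symmetric about $0$ but palindromic about $-(\ell-1)/2$ (since $\Phi_\ell(\zeta^{-1})=\zeta^{-(\ell-1)}\Phi_\ell(\zeta)$), which is what your descent argument actually uses; and your case split omits $m=N-2$, but there the inequality $M^*_{\ell,n}(N-2)\ge M^*_{\ell,n}(N-1)$ is just $M(N-2,N)\ge M(N-1,N)=0$, which is immediate (or again \Cref{thm:JZ}).
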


To prove Stanton's conjecture for the rank, we need a result analogous to \Cref{thm:JZ} for the rank, which was conjectured in \cite{BGRT2021}. The methods used to prove Theorem~\ref{thm:JZ} were the judicious use of clever $q$-series manipulations and comparisons. These methods do not, however, generalize to other situations. Here, we give a more general analytic proof based on the Circle Method. 

The monotonicity result we aim to show here is the following, where $N(m,n)$ denote the number of partitions of $n$ with rank $m$.

\begin{conjecture}\label{cnj:rank_unimodality}
For $n\ge 39$ and $0\le m \le n-3$, we have
\[
N(m,n)\ge N(m+1,n).
\]
\end{conjecture}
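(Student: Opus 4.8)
The plan is to establish Conjecture~\ref{cnj:rank_unimodality} by obtaining uniform asymptotic expansions for $N(m,n)$ via the Circle Method, sharp enough to see the monotonicity in $m$, and then to close the remaining finite range by direct computation. The starting point is the generating function identity
\[
\sum_{m \in \Z} N(m,n) \z^m = \rank_n(\z) = [q^n] R(\z;q),
\]
so that $N(m,n) - N(m+1,n)$ can be extracted as a double contour integral of $R(\z;q)$ against $q^{-n-1}$ and an appropriate kernel in $\z$ picking out the coefficient of $\z^m$ minus that of $\z^{m+1}$. The function $R(\z;q)$ is, for fixed root of unity $\z$, essentially a mock theta function (up to the Eichler-type completion), and more generally for $\z$ on the unit circle it has a well-understood modular-type transformation behaviour under $q \mapsto e^{-2\pi/q}$-type substitutions coming from the work of Bringmann--Ono and the theory of Appell--Lerch sums. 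First I would set up the Circle Method with a single dominant arc near $q=1$ (equivalently $\z$-uniform saddle point), and use the modular transformation of $R$ to pass to the cusp $0$, where the behaviour is governed by $\sinh$/$\cosh$ factors; the minor arcs contribute an error that is exponentially smaller, uniformly in $m$, provided $m$ is not too close to the tails.

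The key technical step is to obtain, for $m$ in the bulk (say $|m| \le (1-\delta)\sqrt{n}\log n$ or a similar natural range dictated by the saddle point), an asymptotic of the shape
\[
N(m,n) = \frac{C(n)}{\text{(normalisation)}} \exp\!\left(\pi\sqrt{\tfrac{2n}{3}} - c\,\frac{m^2}{\sqrt{n}}\right)\bigl(1 + O(\text{lower order})\bigr),
\]
i.e.\ a Gaussian profile in $m$ times the main $p(n)$-type growth; this is the rank analogue of the crank asymptotics that underlie Ji--Zang's Theorem~\ref{thm:JZ}, and it immediately yields $N(m,n) > N(m+1,n)$ in that range once $n$ is large, because the Gaussian is strictly decreasing in $m \ge 0$ and the error terms are of smaller order than the gap between consecutive values. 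For $m$ in the intermediate and tail regimes (close to $n-3$), the asymptotic degenerates and one instead needs monotonicity via a more elementary argument: here I would combine the crude but effective bound $N(m,n) \le N(m-1,n) + (\text{small combinatorial correction})$ coming from Dyson's rank generating function written as $\sum_n q^{n^2}/((\z q;q)_n (\z^{-1}q;q)_n)$ — expanding and comparing coefficients shows $N(m,n) - N(m+1,n)$ equals a manifestly nonnegative $q$-series coefficient up to controllable terms — so that the tail monotonicity is visible directly, just as in the $N(m,n)$ table in the excerpt.

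The main obstacle I anticipate is making the Circle Method estimates genuinely \emph{uniform} in $m$: the saddle point $q = q(n,m)$ moves with $m$, and one must control the transformation of $R(\z;q)$ and the error terms with explicit constants over the whole range $0 \le m \le n-3$ rather than merely for fixed $m$ as $n \to \infty$. This requires careful bookkeeping of the completion term (the Eichler integral / non-holomorphic piece) of $R$, whose contribution must be shown to be lower order uniformly, and it requires a clean treatment of the behaviour as $\z \to 1$ where $\Phi_\ell$-type cancellations occur. A secondary obstacle is the matching of ranges: one must ensure the "bulk" asymptotic regime and the "tail" elementary regime overlap, leaving no gap. Finally, once the asymptotic analysis yields monotonicity for all $n \ge N_0$ with an \emph{effective} $N_0$, the residual range $39 \le n < N_0$ is handled by a finite computer verification using \eqref{eq:rank_zeta}; combined with the known small-$n$ data (cf.\ the table), this completes the proof of Conjecture~\ref{cnj:rank_unimodality}, and hence, via the reduction in \cite{BGRT2021}, of Conjecture~\ref{cnj:Stanton}(1).
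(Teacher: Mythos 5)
Your broad strategy --- Circle Method for bulk $m$, an elementary argument for the tail, and a finite computer verification to close the gap --- matches the paper's architecture, but the technical route you propose for each piece differs substantially from what the paper actually does, and the paper's choices are considerably lighter.

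For the bulk regime, you propose to work with $R(\zeta;q)$ directly, invoke its mock-modular transformation (Bringmann--Ono, Appell--Lerch sums), and control the Eichler-integral completion term uniformly in $m$ via a double contour in $\zeta$ and $q$. The paper avoids all of this. It instead uses the explicit $q$-series identity \eqref{eq:diff_gf}, namely $\mathcal N_m(q)-\mathcal N_{m+1}(q)=H_m(q)/(q;q)_\infty$, where $H_m(q)=\sum_{r\ge 1}(-1)^{r+1}q^{3r^2/2+(m+1/2)r}(q^{-r/2}-q^{r/2})^2$. Since $H_m$ is an absolutely convergent, elementary $q$-series, the only modular input needed is the classical transformation of $(q;q)_\infty^{-1}$ via the Dedekind $\eta$-function (Lemma~\ref{lem:Z3.1}); the uniformity in $m$ then comes from careful Taylor/Euler--Maclaurin expansions of $H_m(e^{-Z})$ (Lemma~\ref{lem:ELmz_bound}, Proposition~\ref{prop:Cnew11_def}) rather than from any mock-modular machinery. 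This sidesteps the hardest obstacle you correctly identified --- uniform control of the non-holomorphic completion --- and is what makes the explicit constant $n_0$ tractable.

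For the tail regime, your sketch (``$N(m,n)-N(m+1,n)$ equals a manifestly nonnegative $q$-series coefficient up to controllable terms'') is too vague, and in fact the coefficient is \emph{not} manifestly nonnegative: from \eqref{eq:Ndiff_partition} one has $N(m,n)-N(m+1,n)=\sum_{r\ge1}(-1)^{r+1}P(\omega_r,r)$ with $P(n,r)=p(n)-2p(n+r)+p(n+2r)$ and $\omega_r=n-\tfrac{3r^2}{2}-(m+\tfrac32)r$, so the sum alternates in sign. The paper's actual argument (Theorem~\ref{thm:m_large}) uses the explicit sandwich bounds on $P(n,r)$ from Lemma~\ref{lem:sandwich} (coming from convexity of $p$) to pair consecutive terms $P(\omega_{2r-1},2r-1)-P(\omega_{2r},2r)$ and show each pair is nonnegative once $m\ge 2\sqrt n$, plus a trivial bound on the finitely many small-$\omega_r$ terms. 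This is a genuinely different --- and entirely elementary --- argument that you would need to supply. You also need to verify that the two regimes overlap; the paper checks $2\sqrt n\le \tfrac13\beta_n^{-3/2}-\tfrac12$ for $n\ge 925276$, and the residual computer verification is nontrivial in size ($n\le 11522$ for all $m$, plus $11523\le n\le 925275$ for $m<2\sqrt n$), which is only feasible because \eqref{eq:Ndiff_partition} reduces each check to $O(\sqrt n)$ lookups of $p(\cdot)$.
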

As shown in \cite{BGRT2021}, this is the key analytic result needed to prove Stanton's Conjecture.
\begin{theorem}[\cite{BGRT2021}, Theorem 1.5]
\Cref{cnj:rank_unimodality} implies \Cref{cnj:Stanton} {\rm(1)}.
\end{theorem}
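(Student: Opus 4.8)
The plan is to deduce \Cref{cnj:Stanton}~{\rm(1)} from \Cref{cnj:rank_unimodality} by separating the two assertions packed into the conjecture. That $\rank_{\ell,n}^*(\zeta)/\Phi_\ell(\zeta)$ is a \emph{Laurent polynomial} at all is, as noted above, the equidistribution of the ranks of the partitions of $\ell n+\beta$ modulo $\ell$; since $\beta$ was chosen so that $\ell n+\beta$ runs through the progressions $5n+4$ and $7n+5$, this is Dyson's conjecture, proved by Atkin--Swinnerton-Dyer \cite{ASD1954}. One must also check that the four correction monomials in $\rank_{\ell,n}^*$ do not spoil this: writing $N:=\ell n+\beta$, we have $N\equiv\beta\pmod\ell$, and since $2\beta\equiv 3\pmod\ell$ for $\ell\in\{5,7\}$ the exponent pairs $\{N-2,-(N-2)\}$ and $\{N-1,-(N-1)\}$ agree modulo $\ell$, so $\zeta^{N-2}-\zeta^{N-1}+\zeta^{-(N-2)}-\zeta^{-(N-1)}$ vanishes at every $\ell$-th root of unity. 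Hence everything in \Cref{cnj:Stanton}~{\rm(1)} beyond equidistribution is the \emph{non-negativity} of the coefficients of the quotient, and that is precisely what \Cref{cnj:rank_unimodality} supplies.

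The crux is an elementary lemma, which I would state and prove first. Let $f(\zeta)=\sum_{m\in\Z}a_m\zeta^m$ have real coefficients, be symmetric, $a_m=a_{-m}$, and unimodal, $a_0\ge a_1\ge a_2\ge\cdots\ge 0$, and suppose $f$ vanishes at every primitive $\ell$-th root of unity. Since $\ell$ is prime, $\Phi_\ell$ has those as its $\ell-1$ distinct roots, so $\Phi_\ell$ divides the polynomial $g(\zeta):=\zeta^d f(\zeta)$, where $d$ is the top of the support of $f$; I claim $f(\zeta)/\Phi_\ell(\zeta)$ then has non-negative coefficients. Indeed $g=\sum_{k=0}^{2d}g_k\zeta^k$ is palindromic with $g_k=g_{2d-k}=a_{k-d}$, so its coefficients rise and then fall, peaking at $k=d$; and $h:=g/\Phi_\ell=\sum_k b_k\zeta^k$ has degree $2d-\ell+1$ and is again palindromic, $b_k=b_{2d-\ell+1-k}$. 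From $g_k=\sum_{i=0}^{\ell-1}b_{k-i}$ one telescopes to the recurrence $b_k=b_{k-\ell}+(g_k-g_{k-1})$, with $b_k=0$ for $k<0$. For $0\le k\le d$ the increment $g_k-g_{k-1}=a_{k-d}-a_{k-d-1}$ is $\ge 0$, because the index $k-d$ is $\le 0$ and $(a_m)$ is non-decreasing on $m\le 0$; so induction gives $b_k\ge 0$ for $0\le k\le d$, and palindromy then forces $b_k\ge 0$ for every $k$. Thus $f/\Phi_\ell=\zeta^{-d}h$ is a Laurent polynomial with non-negative coefficients.

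It then remains to verify the lemma's hypotheses for $f=\rank_{\ell,n}^*$, where $N=\ell n+\beta$. Symmetry is immediate from $N(m,N)=N(-m,N)$ and the symmetry of the four correction monomials. Non-negativity and unimodality follow from three easy facts about partitions of $N$: $(N)$ is the unique one of rank $N-1$, none has rank $N-2$, and $(N-1,1)$ is the unique one of rank $N-3$. By the first two, the modification changes the coefficient of $\zeta^{\pm(N-1)}$ from $1$ to $0$ and that of $\zeta^{\pm(N-2)}$ from $0$ to $1$, leaving a non-negative array supported on $[-(N-2),N-2]$; by the third, its coefficient of $\zeta^{N-2}$ equals its (unchanged) coefficient of $\zeta^{N-3}$, both being $1$. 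Now for $N\ge 39$, \Cref{cnj:rank_unimodality} gives $N(0,N)\ge N(1,N)\ge\cdots\ge N(N-3,N)$, and together with the previous sentence this shows the coefficient of $\zeta^m$ in $\rank_{\ell,n}^*$ is non-increasing in $m\ge 0$, which is exactly the unimodality hypothesis. (This is why the modification is defined as it is: it repairs the unavoidable break in unimodality at rank $N-1$ caused by the partition $(N)$.) The lemma then gives \Cref{cnj:Stanton}~{\rm(1)} whenever $\ell n+\beta\ge 39$, i.e.\ for $n\ge 7$ if $\ell=5$ and $n\ge 5$ if $\ell=7$; the finitely many remaining $n$ ($0\le n\le 6$ for $\ell=5$, $0\le n\le 4$ for $\ell=7$) are dispatched by tabulating $N(m,N)$, checking unimodality by hand, and applying the same lemma.

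Every step of this reduction is elementary, so the main obstacle lies not here but in \Cref{cnj:rank_unimodality} itself, whose proof by the Circle Method is the substance of the paper. The only point in the reduction requiring genuine care is the design of $\rank_{\ell,n}^*$: the four added monomials must simultaneously keep all coefficients non-negative, make the coefficient sequence monotone all the way out to its extreme index, and still vanish at the roots of $\Phi_\ell$, and one has to check that the definition achieves all three at once.
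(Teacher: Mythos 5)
The paper does not actually reproduce a proof of this theorem; it is quoted as Theorem~1.5 of \cite{BGRT2021}. So I will assess your argument on its own merits rather than against an in-paper proof.

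Your reduction is correct and is the natural one. The two ingredients are cleanly separated: (i) equidistribution of ranks mod~$\ell$ for $N=\ell n+\beta$ (Atkin--Swinnerton-Dyer) gives that $\rank_{\ell,n}^*(\zeta)/\Phi_\ell(\zeta)$ is a Laurent polynomial, once one checks (as you do, via $2\beta\equiv 3\pmod\ell$ for $\ell\in\{5,7\}$) that the four correction monomials vanish at primitive $\ell$-th roots of unity; (ii) the elementary lemma that a symmetric, non-negative, unimodal Laurent polynomial divisible by $\Phi_\ell$ has a quotient with non-negative coefficients. Your proof of (ii) via the telescoping recurrence $b_k=b_{k-\ell}+(g_k-g_{k-1})$ on the half $0\le k\le d$ together with palindromy of the quotient is correct, and your verification of the unimodality hypothesis for $\rank_{\ell,n}^*$ — using $N(N-1,N)=1$, $N(N-2,N)=0$, $N(N-3,N)=1$ together with \Cref{cnj:rank_unimodality} for $N\ge 39$ — is accurate and correctly explains the role of the correction monomials.

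One caveat on your treatment of the finitely many $n$ with $N=\ell n+\beta<39$. You propose to ``check unimodality by hand and apply the same lemma.'' But unimodality of the modified rank polynomial genuinely fails for some small $N$ (for instance the paper's table shows $N(0,38)=1389<1390=N(1,38)$, and a direct computation gives the same failure at $N=6$), so this step is not automatic: you would need to verify unimodality separately for each of the specific residues $N\in\{4,9,14,19,24,29,34\}$ (for $\ell=5$) and $N\in\{5,12,19,26,33\}$ (for $\ell=7$), and there is no a priori reason these are exempt from the small-$n$ failures. The robust fix is to bypass the lemma there and instead directly compute $\rank_{\ell,n}^*(\zeta)/\Phi_\ell(\zeta)$ for those finitely many $n$ and confirm non-negativity of its coefficients, which is a trivial finite check and does not change the substance of the argument. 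With that adjustment your reduction is complete.
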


\Cref{cnj:rank_unimodality} is known to be true for $n$ sufficiently large; in fact, Zhou \cite{Zhou2021} showed that $\cb{N(m,n)}_{\vb{m}\le n-73}$ is log-concave. However, such proofs are quite complicated and required different arguments depending on the size of $m$, and no explicit bounds are known in the literature. The main result of the paper is the following.

\begin{theorem}\label{thm:main}
	\Cref{cnj:rank_unimodality}, and hence \Cref{cnj:Stanton} is true.
\end{theorem}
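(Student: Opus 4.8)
The plan is to establish Conjecture~\ref{cnj:rank_unimodality} directly, since by the cited results of \cite{BGRT2021} this immediately yields Theorem~\ref{thm:main}. The strategy is a hard-analytic one via the Circle Method: obtain an asymptotic expansion, uniform in $m$, for the rank counting function $N(m,n)$, and then compare $N(m,n)$ with $N(m+1,n)$ using the leading behaviour plus effective error terms. Concretely, I would start from the generating function $R(\z;q)$ in \eqref{eq:rank_zeta}. The natural object is the ``rank difference'' generating function; one writes
\[
N(m,n)-N(m+1,n) = \frac{1}{2\pi i}\oint \left(\text{coefficient extraction in }\z\right)\left(\text{coefficient extraction in }q\right),
\]
or, more efficiently, one exploits the fact that $\sum_m (N(m,n)-N(m+1,n))\z^m$ is, up to the factor $(1-\z)$, essentially $\rank_n(\z)$, so that we are reduced to understanding the sign pattern of the coefficients of $(1-\z^{-1})\rank_n(\z)$ for $m$ in the stated range. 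The key input is then a uniform asymptotic for these coefficients as $n\to\infty$, with an explicit dependence on $m$.

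The main technical step is the Circle Method analysis. I would treat separately the regime of small $m$ (say $|m| \le C\sqrt{n}$ or so, where $N(m,n)$ is of maximal order and behaves like a Gaussian in $m$) and the regime of large $m$ (comparable to $n$, near the tail, where $N(m,n)$ decays and the relevant behaviour is governed by a different saddle). For small $m$ one uses the known modular-type transformation behaviour of $R(\z;q)$ at roots of unity — here $\z$ is itself a parameter to be integrated over a small arc, which makes this a ``two-dimensional'' Circle Method / saddle point computation. The dominant contribution comes from the cusp at $q\to 1$ (and $\z\to 1$), giving a main term of the shape $N(m,n)\sim \frac{\beta(n)}{4n}\,\mathrm{sech}^2\!\bigl(\tfrac{\beta(n) m}{2}\bigr)\cdot \text{(main }p(n)\text{-type term)}$ for an appropriate scaling $\beta(n)\asymp n^{-1/2}$, whose $m$-monotonicity on $0\le m\le n-3$ is then transparent provided the error terms are controlled with fully explicit constants. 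The large-$m$ regime requires a separate estimate, comparing consecutive coefficients of $\rank_n(\z)$ directly, where one can use positivity/combinatorial monotonicity bounds or a coarser version of the asymptotics, since there the gaps $N(m,n)-N(m+1,n)$ are comparatively large.

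The step I expect to be the main obstacle is making every error term in the Circle Method \emph{effective and uniform in $m$} across the whole range $0 \le m \le n-3$, and then pinning down the threshold $n \ge 39$. Textbook Circle Method asymptotics are stated ``for $n$ sufficiently large'' with implied constants; here we must bound the contribution of the minor arcs, the error in the modular transformation of the Mordell-type integral appearing in the transformation of $R(\z;q)$, and the tail of the asymptotic expansion, all with numerically explicit constants, and then verify by a finite computation (the table of $N(m,n)$ for $n\le 39$ exhibited in the Example) that the remaining finitely many cases hold. A secondary difficulty is that $R(\z;q)$ is not modular but a mock/quasimodular-type object, so its transformation law carries an extra non-holomorphic or integral correction term; I would need to show this correction is genuinely lower-order uniformly in $m$, which is where most of the delicate estimation will live. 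Once the uniform asymptotic with explicit error is in hand, the monotonicity $N(m,n)\ge N(m+1,n)$ follows by comparing the main terms — which differ by a manifestly positive amount of larger order than the errors — and Conjecture~\ref{cnj:Stanton}~(1), hence Theorem~\ref{thm:main}, follows from the implications recorded in the excerpt.
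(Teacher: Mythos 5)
Your overall architecture---Circle Method for small $m$ with fully explicit error bounds, a separate argument for large $m$, and a finite computer check at the end---matches the shape of the paper's proof. But the technical route you sketch for the Circle Method part is genuinely different from, and substantially harder than, what the paper actually does. You propose integrating over $\zeta$ and $q$ jointly, invoking the (mock) modular transformation behaviour of $R(\zeta;q)$ at roots of unity, and identify the non-holomorphic/Mordell-integral correction as the main obstacle. The paper avoids all of this. Instead it starts from the explicit Atkin--Swinnerton-Dyer identity
\[
\mathcal{N}_m(q) - \mathcal{N}_{m+1}(q) = \frac{H_m(q)}{(q;q)_\infty},\qquad
H_m(q) = \sum_{r\ge 1}(-1)^{r+1} q^{\frac{3r^2}{2}+(m+\frac12)r}\bigl(q^{-\frac r2}-q^{\frac r2}\bigr)^2,
\]
so the rank-difference generating function factors as a genuinely modular object $1/(q;q)_\infty$ times a rapidly convergent, elementary theta-like sum $H_m(q)$. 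One then applies Wright's single-variable Circle Method with one major arc around $q=e^{-\beta_n}$; the uniformity in $m$ is handled by Taylor-expanding $H_m(e^{-Z})$ and the Fermi-type factor $\frac{1}{1+e^w}$ (at $w=(m+\tfrac12)Z$) through a hierarchy of decompositions $\mathcal{M}^{[1]},\dots,\mathcal{M}^{[6]}$, each with a numerically explicit error constant. No mock modular transformation enters anywhere; that is exactly what the factorization buys you, and it is why your ``main obstacle'' dissolves under the paper's approach.

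The large-$m$ argument in the paper is also more concrete than your sketch suggests. It is not a ``coarser version of the asymptotics'' or an appeal to unstated combinatorial monotonicity; it rests on the exact identity
\[
N(m,n)-N(m+1,n) = \sum_{r\ge 1}(-1)^{r+1} P(\omega_r,r),\qquad P(n,r) := p(n)-2p(n+r)+p(n+2r),
\]
together with two-sided effective bounds for the second difference $P(n,r)$ coming from Rademacher's expansion (Lemma~\ref{lem:sandwich}). One shows the $r=1$ term dominates the alternating tail. This is an elementary convexity-of-$p(n)$ argument, and it applies for $m\ge 2\sqrt n$; the paper's Circle Method piece only covers $m\lesssim n^{3/4}$, so the two regimes need to overlap, which they do. Finally, your estimate of the finite check is off: the cutoff $n\ge 39$ is the conclusion, but the computation required is much larger---all $m$ for $n\le 11522$, and the small-$m$ range $m<2\sqrt n$ for $11523\le n\le 925275$---before the analytic bounds take over. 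As written, your plan would founder on the uniform control of the Mordell correction and on the gap between your declared thresholds and what the analytic estimates can actually certify.
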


{Returning to Stanton's original goals for his conjecture, this suggests the following. 
\begin{question}\label{TheQuestion}
What is the combinatorial meaning of the positive numbers occurring in Conjecture~\ref{cnj:Stanton}? What does this suggest about bijections between ranks and cranks mod $5,7,11$?
\end{question}
This question is further bolstered by recent work in \cite{RolenTrippWagner} which gave a procedure for finding infinite families of crank-type functions explaining ``most'' Ramanujan-type congruences for all $k$-colored partition functions, and in \cite{BGRT2021} which reworked these families to give new families which explained the same Ramanujan-type congruences but also satisfy Stanton-type conjectures. This suggests that the phenomenon is very general, and we plan to address this in the future. 
}

We outline the strategy for proving \Cref{thm:main}. Our proof for  $m\ll n^{\frac 34}$ is based on Zhou's work on the eventual log-concavity of the rank \cite{Zhou2021}, which in turn is based on Wright's Circle Method. Typical for such approaches, this only shows that there exists $n_0\in\N$ (which may be very large), such that the statement is true for $n\ge n_0$. In this paper, we fine-tune the arguments in \cite{Zhou2021}, establish strong error bounds, so that the constant $n_0$ is sufficiently small to make explicit checking of the remaining cases feasible. For $m\gg n^{\frac 12}$, we have another argument, which is based on the convexity of the partition function $p(n)$. Combining these arguments, we find an explicit $n_0$ such that the statement is true for $n\ge n_0$, and check the remaining cases by computer. 

{The paper is organized as follows. In Section~\ref{Prelim} we give some preliminaries on asymptotics, particularly for the reciprocal of $(q)_\infty$, the Euler--Maclaurin summation formula, explicit estimates for Bessel functions, and resulting bound for partitions and partition differences. Using these analytic estimates, in Section~\ref{section:cm}, we set up the Circle Method and bound minor and major arc contributions. To prove our main result, we require extensive computer computations, which are summarized in a case-by-case basis in Section~\ref{section:num_com}, culminating in the proof of Theorem~\ref{thm:m_small} which proves Theorem~\ref{thm:main} for  small $m$ (specifically, asymptotically up to about $n^{\frac34}$) for $n$ sufficiently large. We conclude with the proof of Theorem~\ref{thm:main}, that is, Stanton's Conjecture,  in Section~\ref{FinalSection}.

}

\section{Preliminaries}\label{Prelim}
\subsection{Generating functions}

We {require the} following generating function $N(m,n)$ (see \cite[(2.12)]{ASD1954}),
\[
	\Nc_m(q) := \sum_{n=0}^\infty N(m,n)q^n = \frac{1}{(q;q)_\infty}\sum_{r=1}^\infty (-1)^{r+1}q^{\frac{r(3r-1)}{2}+mr}\left(1-q^r\right).
\]
Since $N(-m,n)=N(m,n)$ for $m\in\Z$, we may assume that $m\in\N_0$. As we aim to prove monotonicity, we are interested in $N(m,n)-N(m+1,n)$, {which has the generating function}
\begin{equation}\label{eq:diff_gf}
	\mc N_m(q) - \mc N_{m+1}(q) = \sum_{n=0}^\infty \rb{N(m,n) - N(m+1,n)} q^n = \frac{H_m(q)}{(q;q)_\infty},
\end{equation}
where
\[
	H_m(q) := \sum_{r=1}^\infty (-1)^{r+1}q^{\frac{3r^2}{2}+\left(m+\frac12\right)r}\left(q^{-\frac r2}-q^\frac r2\right)^2. 
\]

\subsection{Euler--Maclaurin summation}
{To produce further asymptotics in the Circle Method, we require the Euler--Maclaurin summation. In general, it gives an exact formula for  infinite sums in terms of an associated integral. 
To this end, let} $f\colon\R\to\R$ be a differentiable function, such that $f$ and its derivative $f'$ have rapid decay, {and let} $N\in\Z$. A simple {case of } the Euler--Maclaurin summation formula, {sufficient for our purposes,} states that
\begin{equation}\label{eq:EM}
	\sum_{j=N}^\infty f(j) = \int_N^\infty f(u) du + \frac{f(N)}{2} + \int_N^\infty f'(u)B_1(u-\flo{u}) du,
\end{equation}
where $B_1(u)=u-\frac12$ is the first Bernoulli polynomial.\footnote{A more detailed exposition on the Euler--Maclaurin summation formula can be found in \cite[p. 319]{Zagier2006}.} 

\subsection{Estimates and expansions for Bessel functions}
{We also require tight estimates for $I$-Bessel functions. Specifically, let $u>0$ and $\kappa\ge 0$. Then we approximate $I_{-\k-1}(u)$, the $I$-Bessel function of order $-\k-1$, by comparing it to the following integral
\[
	\mc I_\kappa(u) := \frac{1}{2\pi i} \int_{1-i}^{1+i} w^{-\kappa-1} \exp\rb{\frac u2 \rb{w+\frac1w}} dw
\]
This gives us the following, whose proof we omit as it is standard, following from the  integral representation of the $I$-Bessel function (see e.g. \cite{Arken1985}).}

\begin{lemma}\label{lem:Bessel_segment}
{For $u\in\R^+$ and $\k< 0$, we have}
\[
		| I_{k}(u)-\mc I_\k(u) |\le \frac{2^{\frac{1-\k}{2}}}{\pi} \rb{\frac{2^{-\k-1}\Gamma(-\k)}{u^{-\k}} + \exp\rb{\frac{3u}{4}}},
	\]
\end{lemma}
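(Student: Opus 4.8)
The plan is to estimate $I_{\k}(u)$ via its standard integral (Schl\"afli/Hankel-type) representation and then deform the contour so as to isolate exactly the short segment $[1-i,1+i]$ that defines $\mc I_\k(u)$, bounding the remaining pieces. Concretely, I would start from the Hankel loop representation
\[
	I_{\k}(u) = \frac{1}{2\pi i} \int_{\mc C} w^{-\k-1} \exp\rb{\frac u2\rb{w+\frac1w}} dw,
\]
where $\mc C$ is a Hankel contour wrapping the branch cut along $(-\infty,0]$; since $\k<0$ the integrand's behaviour at $w=0$ is mild, and one can take $\mc C$ to be (for definiteness) a path coming in from $-\infty$ along the lower side of the cut, encircling the origin, and returning to $-\infty$ along the upper side, or — after a deformation — the vertical line $\re w = 1$ traversed upward together with the two horizontal rays from $1\pm i\infty$ back toward the negative real axis. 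The contribution of the central segment $\re w=1$, $|\im w|\le 1$ is precisely $\mc I_\k(u)$, so the lemma reduces to bounding $I_\k(u)-\mc I_\k(u)$ by the contributions of (a) the two vertical tails $\re w =1$, $|\im w|\ge 1$, and (b) the two horizontal portions running out to the negative real axis (or the loop around the cut), depending on which deformation one uses.

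The key estimates are then elementary. On the vertical line $w = 1 + it$ one has $\re(w + 1/w) = 1 + \frac{1}{1+t^2}$, which is bounded above by $2$ for all $t$ and, crucially, equals $1 + \frac{1}{1+t^2}\le 1 + \frac{1}{1+1} \le \frac32$ once $|t|\ge 1$; and $|w^{-\k-1}| = (1+t^2)^{(-\k-1)/2}$. Hence the two vertical tails contribute at most
\[
	\frac{1}{\pi}\, e^{\frac u2\cdot\frac32} \int_1^\infty (1+t^2)^{\frac{-\k-1}{2}}\, dt \cdot e^{\frac u2 \cdot(\text{correction})},
\]
which after the crude bound $(1+t^2)^{(-\k-1)/2}\le 2^{(-\k-1)/2} t^{-\k-1}$... — actually, since $-\k-1$ could be positive, one instead keeps the exact exponential decay: on the tail $\re(w+1/w)\to 1$ and the Gaussian-type decay of $\exp(\frac u2\re(w+1/w))$ combined with the polynomial factor gives a bound of the shape $\frac{2^{(1-\k)/2}}{\pi}\exp(\frac{3u}{4})$ after writing $e^{\frac u2(1 + \frac1{1+t^2})} \le e^{\frac u2} e^{\frac u2\cdot\frac1{1+t^2}}$ and noting $e^{\frac u2}\le e^{\frac{3u}{4}}$, while the $t$-integral of the polynomial part against the remaining decaying factor is $O(1)$; one tracks constants carefully to land on the stated $2^{(1-\k)/2}/\pi$. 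Meanwhile the loop around the cut $(-\infty,0]$, where $w = -x$, $x\ge 0$, gives $\re(w + 1/w) = -x - 1/x \le -2$, and with $|w^{-\k-1}| = x^{-\k-1}$ the integral $\int_0^\infty x^{-\k-1} e^{-\frac u2(x + 1/x)} dx$ is dominated by $\int_0^\infty x^{-\k-1} e^{-\frac{u}{2x}} dx = 2^{-\k}\Gamma(-\k) u^{\k}$ (substitute $y = u/(2x)$), producing the term $\tfrac{2^{-\k-1}\Gamma(-\k)}{u^{-\k}}$ up to the overall constant $2^{(1-\k)/2}/\pi$.

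Assembling (a) and (b) yields exactly the claimed inequality. The main obstacle is purely bookkeeping: choosing the contour deformation so that \emph{both} error pieces are genuinely present and estimating each with constants sharp enough to fit under the single prefactor $2^{(1-\k)/2}/\pi$, rather than a larger absolute constant. One must be slightly careful that the branch of $w^{-\k-1}$ is consistent across the deformation (the phase jump across the cut is what makes the loop integral nonzero and equal to a multiple of the $\Gamma$-function term), and that the horizontal connecting segments at height $\pm i$ between $\re w = 1$ and the negative axis contribute something already absorbed into the $\exp(\frac{3u}{4})$ term — on those segments $\re(w+1/w)$ stays $\le \frac32$, so the same bound as the vertical tails applies. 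None of this is deep; since the paper explicitly says the proof is omitted as standard, the role of this lemma downstream is just to have \emph{explicit} constants, and the proof sketch above is the route by which those constants are obtained.
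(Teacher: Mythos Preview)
Your overall strategy --- start from the Schl\"afli/Hankel contour representation of the Bessel function and deform the contour so that the segment $[1-i,1+i]$ appears explicitly, then bound the remaining pieces --- is exactly the standard route the paper alludes to. However, there is a genuine gap in your treatment of the ``vertical tails'' $w=1+it$, $|t|\ge 1$. On that ray $\re(w+1/w)=1+\tfrac{1}{1+t^2}\to 1$ as $|t|\to\infty$, so $\exp\!\big(\tfrac u2\re(w+1/w)\big)$ does \emph{not} decay --- there is no ``Gaussian-type decay'' to invoke. Since for $\k<0$ the exponent in $|w^{-\k-1}|=(1+t^2)^{(-\k-1)/2}$ is nonnegative, the tail integral $\int_1^\infty (1+t^2)^{(-\k-1)/2}\exp\!\big(\tfrac u2(1+\tfrac1{1+t^2})\big)\,dt$ diverges outright, and no rearranging of constants will rescue it. Your contour, as first described, simply does not give a finite bound.

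The fix is the one you yourself gesture at in your final paragraph: do not use vertical tails at all. Take the deformed Hankel contour to run horizontally at height $\pm i$ from $-\infty\pm i$ to $1\pm i$, joined by the vertical segment $[1-i,1+i]$. On each horizontal ray $w=x\pm i$, $x\le 1$, one has $\re(w+1/w)=x+\tfrac{x}{x^2+1}$, which is $\le\tfrac32$ for $0\le x\le 1$ (this bounded piece, together with $|w|\le\sqrt 2$ there, feeds the $e^{3u/4}$ term with the prefactor $2^{(1-\k)/2}/\pi$), and is $\le 0$ for $x\le 0$ with $\re(w+1/w)\sim x$ as $x\to -\infty$. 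Combined with $|w^{-\k-1}|\sim |x|^{-\k-1}$, the portion $x\le 0$ is dominated by $\int_0^\infty y^{-\k-1}e^{-uy/2}\,dy = 2^{-\k}\Gamma(-\k)u^{\k}$, which produces the $\Gamma$-term. With this contour there are only these two contributions, and no separate ``loop around the cut'' on the real axis is needed --- the two horizontal rays already lie on opposite sides of the cut as $x\to-\infty$, which is what makes the full integral equal to $I_{-\k-1}(u)$ rather than zero. Once you commit to this contour the bookkeeping is routine and lands on the stated constants.
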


{We also require some facts on the asymptotic expansions of the $I$-Bessel functions. First, we recall standard expansions. }
Let $N,M\in\N_0$, $s\in\C$. Then we have for $w\in\C$  
\begin{equation*}\label{eq:IBes_exp}
	I_s(w) = \frac{e^w}{\sqrt{2\pi w}} \rb{\sum_{j=0}^{N-1} (-1)^j \frac{a_j(s)}{w^j} + \delta_N(s,w)} - ie^{-\pi i s} \frac{e^{-w}}{\sqrt{2\pi w}} \rb{\sum_{j=0}^{M-1} \frac{a_j(s)}{w^j} + \gamma_M(s,w)}
\end{equation*}
(see \cite[(exercise 7.13.2)]{Olver1997}),
where 
\ba
	a_0(s) := 1,\qquad a_j(s) := \frac1{8^jj!}{\prod_{k=1}^j \left(4s^2-(2k+1)^2\right)}.
\ea
Moreover, the error terms $\d_N$ and $\g_M$ satisfy the following bounds: 
\begin{align*}
	|\g_M(s,w)| &\le 2\exp\left(\vb{\frac{s^2-\frac14}{w}}\right)\vb{\frac{a_M(s)}{w^M}},\quad |\d_N(s,w)| \le \frac{2\pi^\frac12\Ga\left(\frac N2+1\right)}{\Ga\left(\frac N2+\frac12\right)} \exp\left(\frac\pi2\vb{\frac{s^2-\frac14}{w}}\right)\vb{\frac{a_N(s)}{w^N}}.
\end{align*}
For later use, we consider for $\nu\in\N_0$ the following linear combination of $I$-Bessel functions:
\[
	I_s^{[\nu]}(w) := \sum_{r=0}^\nu \binom \nu r (-1)^{\nu+r} I_{s-r}(w).
\]
Then $I_s^{[\nu]}(w)$ has an asymptotic expansion of the following form:
\begin{equation}\label{eq:Besselv_asymp}
	I_s^{[\nu]}(w) = \hspace{-1mm}\frac{e^w}{\sqrt{2\pi w}} \rb{\sum_{j=0}^{N-1} (-1)^j \frac{a_j^{[\nu]}(s)}{w^j} + \delta_N^{[\nu]}(s,w)} \hspace{-1mm}- ie^{-\pi i s} \frac{e^{-w}}{\sqrt{2\pi w}} \rb{\sum_{j=0}^{M-1} \frac{a_j^{[\nu]}(s)}{w^j} + \gamma_M^{[\nu]}(s,w)},\hspace{-2mm}
\end{equation}
where
\ba
a_j^{[\nu]}(s) &:= \sum_{r=0}^\nu \binom \nu r (-1)^{\nu+r} a_j(s-r), \quad
	\vb{\gamma_M^{[\nu]}(s,w)} \le \sum_{r=0}^\nu \binom \nu r \vb{\gamma_M(s-r,w)},\\ \vb{\delta_N^{[\nu]}(s,w)} &\le \sum_{r=0}^\nu \binom \nu r \vb{\delta_N(s-r,w)}.
\ea
{Finally, a direct calculation gives the following.}

\begin{proposition}\label{prp:Besselv_asymp}
	Let $\k\ge0$ and $\nu\in\N_0$. We have that $a_j^{[\nu]}(\k)=0$ for $j<\frac\nu2$.
\end{proposition}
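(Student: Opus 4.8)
The plan is to prove that $a_j^{[\nu]}(\kappa) = 0$ for $j < \frac{\nu}{2}$ by analyzing the polynomial structure of $a_j$ as a function of its order parameter, combined with a finite-difference annihilation argument. First I would observe that, directly from the product formula $a_j(s) = \frac{1}{8^j j!}\prod_{k=1}^j(4s^2 - (2k+1)^2)$, the quantity $a_j(s)$ is a polynomial in $s^2$ of degree $j$, hence a polynomial in $s$ of degree $2j$ (and in fact an even polynomial in $s$). The linear combination defining $a_j^{[\nu]}$,
\[
a_j^{[\nu]}(s) = \sum_{r=0}^{\nu} \binom{\nu}{r}(-1)^{\nu+r} a_j(s-r),
\]
is precisely the $\nu$-th forward finite difference $(\Delta^\nu a_j)(s-\nu)$ (up to the standard sign convention), applied to the polynomial $s \mapsto a_j(s)$ of degree $2j$. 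The key classical fact is that the $\nu$-th finite difference operator annihilates every polynomial of degree strictly less than $\nu$. Therefore, if $2j < \nu$, i.e. $j < \frac{\nu}{2}$, we get $a_j^{[\nu]}(s) \equiv 0$ identically in $s$, and in particular $a_j^{[\nu]}(\kappa) = 0$.

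The main steps, in order, are: (i) record that $a_j(s)$ is a polynomial in $s$ of degree exactly $2j$ with leading coefficient $\frac{(4)^j}{8^j j!} = \frac{1}{2^j j!}$ (this precise value is not needed, only that the degree is $2j$); (ii) recognize the combination $\sum_{r=0}^\nu \binom{\nu}{r}(-1)^{\nu+r} f(s-r)$ as the iterated finite difference $\Delta^\nu$ applied to $f$ after a shift, or alternatively invoke the operator identity $\sum_{r}\binom{\nu}{r}(-1)^r T^r = (1-T)^\nu$ where $T$ is the unit shift; (iii) apply the standard lemma that $(1-T)^\nu$ kills $\mathbb{C}[s]_{<\nu}$, i.e. all polynomials of degree less than $\nu$; (iv) conclude that for $j < \frac{\nu}{2}$ we have $2j \le \nu - 1 < \nu$, so $a_j^{[\nu]}$ is the zero polynomial and thus vanishes at $s = \kappa$.

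There is essentially no obstacle here; the statement is elementary once one identifies $a_j^{[\nu]}$ as a finite difference of a polynomial of controlled degree. The only mild subtlety worth a sentence is the boundary case: when $\nu$ is even, say $\nu = 2j$, the bound $j < \frac{\nu}{2}$ excludes $j = \frac{\nu}{2}$, consistent with the fact that $a_j$ genuinely has degree $2j = \nu$ and so $\Delta^\nu a_j$ is a nonzero constant (the leading coefficient times $\nu!$) rather than zero. Thus the range $j < \frac{\nu}{2}$ in the statement is sharp, and the proof does not need to say anything about $j = \frac{\nu}{2}$. One could optionally remark that an entirely equivalent phrasing is to note $a_j^{[\nu]}(s)$ is, up to constants, the generating-function coefficient extraction showing that the asymptotic expansion of $I_s^{[\nu]}(w)$ begins at order $w^{-\lceil \nu/2\rceil}$ rather than $w^0$, which is the reason this proposition is recorded for later use in the Circle Method estimates.
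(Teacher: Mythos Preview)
Your argument is correct. The paper itself does not supply a proof beyond the phrase ``a direct calculation gives the following,'' and your finite-difference argument is exactly the natural way to carry out that calculation: recognizing $a_j(s)$ as a polynomial of degree $2j$ in $s$ and $a_j^{[\nu]}(s)$ as $(-1)^\nu$ times the $\nu$-th backward difference $\nabla^\nu a_j(s)$, which annihilates polynomials of degree below $\nu$, immediately gives the vanishing for $2j<\nu$. Your remark on sharpness at $j=\tfrac{\nu}{2}$ is a nice addition but not required for the proposition as stated.
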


\subsection{Estimates for partitions}

{We first give basic estimates for $(q;q)_\infty^{-1}$. A direct calculation, using the modularity of the Dedekind $\eta$-function (see 5.8.1 of \cite{CS}) and bounds for $p(n)$ (see \cite[Theorem 14.5]{Apostol1976}) gives {the following.}\footnote{The explicit constants given here (and for the rest of the paper) do not need to be optimal.}

\begin{lemma}\label{lem:Z3.1}
	Let $Z=x+iy$, where $x,y\in\R$, $0<x\le\frac{1}{10}$, $0\le|y|\le\pi$. {Further let} $q:=e^{-Z}$.
	\begin{enumerate}[leftmargin=*,label=\rm{(\arabic*)}]
		\item For $|y|\le x$ {we have}, with $|C|\leq1$
		\begin{equation*}
			\frac{1}{(q;q)_\infty} = \sqrt{\frac{Z}{2\pi}} \exp\rb{-\frac{Z}{24}+\frac{\pi^2}{6Z}} + C.
		\end{equation*}

		\item For $x\le|y|\le\pi$, we have
		\begin{equation}\label{eq:Z3.1_claim2}
			\vb{\frac{1}{(q;q)_\infty}} \le 2\sqrt{x}\exp\left(\frac{\pi^2}{12x}\right).
		\end{equation}
	\end{enumerate}
\end{lemma}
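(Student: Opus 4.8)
The plan is to base everything on the modular transformation of the Dedekind $\eta$-function, $\eta(\tau) = q^{1/24}(q;q)_\infty$ with $q = e^{2\pi i\tau}$, which under $\tau \mapsto -1/\tau$ satisfies $\eta(-1/\tau) = \sqrt{-i\tau}\,\eta(\tau)$. Writing $q = e^{-Z}$ corresponds to $\tau = iZ/(2\pi)$, and the inversion sends this to $\tau' = -1/\tau = 2\pi i/Z$, i.e.\ to the nome $q' = e^{-4\pi^2/Z}$. Carrying out the substitution, one gets the exact identity
\[
\frac{1}{(q;q)_\infty} = \sqrt{\frac{Z}{2\pi}}\,\exp\!\rb{-\frac{Z}{24}+\frac{\pi^2}{6Z}}\cdot\frac{1}{(q';q')_\infty},
\]
so part (1) reduces to showing $(q';q')_\infty^{-1} = 1 + C'$ with $|C'|$ controlled, where $q' = e^{-4\pi^2/Z}$ and, crucially, in the regime $|y|\le x \le \tfrac1{10}$ the real part of $4\pi^2/Z$ is large (at least $\tfrac{4\pi^2}{2x} \ge 20\pi^2$), so $|q'|$ is extremely small.

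For part (1), I would expand $(q';q')_\infty^{-1} = \sum_{n\ge 0} p(n) |q'|^{\,?}$ — more precisely bound $|(q';q')_\infty^{-1} - 1| \le \sum_{n\ge 1} p(n)|q'|^n$ after accounting for the fact that $q'$ is complex (one has $\re(4\pi^2/Z) = 4\pi^2 x/(x^2+y^2) \ge 4\pi^2 x/(2x^2) = 2\pi^2/x$ when $|y|\le x$). Using the crude bound $p(n) \le e^{c\sqrt n}$ with $c = \pi\sqrt{2/3}$ from \cite[Theorem 14.5]{Apostol1976}, and $|q'| \le e^{-2\pi^2/x} \le e^{-20\pi^2}$, the geometric-type tail $\sum_{n\ge1} e^{c\sqrt n} e^{-2\pi^2 n/x}$ is dominated by its first term and is far below $1$; this yields $|C|\le 1$ with enormous room to spare. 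The multiplicative error $C'$ then gets converted to the additive error $C$ by multiplying through by the prefactor $\sqrt{Z/(2\pi)}\exp(-Z/24 + \pi^2/(6Z))$, whose modulus I must check is bounded — here $|\exp(\pi^2/(6Z))| = \exp(\pi^2 x/(6(x^2+y^2)))$, which in the range $|y|\le x$ is at most $\exp(\pi^2/(12x))$, potentially large, so one has to be slightly careful: the cleanest route is to keep the statement as written (the error $C$ absorbs the prefactor times $C'$, and since $|C'|$ beats any fixed power of the prefactor by the doubly-exponential smallness of $|q'|$, the bound $|C|\le1$ survives). The main technical point to get right is bookkeeping the real part of $1/Z$ versus $x$ in the cone $|y|\le x$.

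For part (2), the transformation still applies but now $x \le |y| \le \pi$, so $|q'| = e^{-\re(4\pi^2/Z)}$ is no longer tiny and the bound $(q';q')_\infty^{-1} = 1 + O(\text{small})$ fails. Instead I would bound directly: $|(q;q)_\infty^{-1}| = \sum_{n\ge0} p(n) q^n$ term-by-term is false (it's not a positive series in $q$), so instead use $|(q;q)_\infty^{-1}| \le (|q|;|q|)_\infty^{-1}\cdot(\text{correction})$ — more robustly, apply the transformation to get $|(q;q)_\infty^{-1}| = \sqrt{|Z|/(2\pi)}\,\vb{\exp(-Z/24+\pi^2/(6Z))}\cdot\vb{(q';q')_\infty^{-1}}$, bound $\vb{(q';q')_\infty^{-1}} \le (|q'|;|q'|)_\infty^{-1} \le \exp(\pi^2|q'|/(6(1-|q'|)))$ or just by $(1-|q'|)^{-\infty}$-type estimates, and then observe $\re(\pi^2/(6Z)) = \pi^2 x/(6(x^2+y^2)) \le \pi^2/(6x)$ always, while $\re(-Z/24)\le 0$ and $\sqrt{|Z|/(2\pi)} \le 1$ for $x$ small and $|y|\le\pi$. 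Combining, $\vb{(q;q)_\infty^{-1}} \le \exp(\pi^2/(6x))\cdot(\text{bounded factor from }q')$, and one checks the constant works out to the claimed $2\sqrt{x}\exp(\pi^2/(12x))$ — note $\pi^2/(12x)$ not $\pi^2/(6x)$, so here one cannot afford to be lossy: the gain of a factor $2$ in the exponent must come from a genuinely better estimate on $\re(\pi^2/(6Z))$ using $|y|\ge x$, namely $\re(\pi^2/(6Z)) = \pi^2 x/(6(x^2+y^2)) \le \pi^2 x/(6\cdot 2x^2) = \pi^2/(12x)$. That observation — that $|y|\ge x$ forces $x^2+y^2 \ge 2x^2$ — is exactly the crux, and combined with $\sqrt{|Z|/(2\pi)} \le \sqrt{(x+\pi)/(2\pi)}$ and a uniform bound on the $q'$-factor (which is now bounded since $\re(4\pi^2/Z) \ge 0$ always and in fact $\ge$ a small positive quantity) gives the result after a short numerical check that the leftover constants are $\le 2\sqrt{x}$.

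The step I expect to be the main obstacle is \emph{part (2)'s constant}: getting the exponent down to $\pi^2/(12x)$ rather than $\pi^2/(6x)$ requires using $|y|\ge x$ sharply in $\re(1/Z)$ and simultaneously ensuring the $(q';q')_\infty^{-1}$ factor does not contribute an additional growing exponential (it does not, since $\re(4\pi^2/Z)\ge 0$, but when $|y|$ is close to $x$ this factor is merely bounded by a constant, not close to $1$, so the residual constant needs an honest estimate). Part (1) is essentially routine once the $\eta$-transformation is invoked, the only care being the cone condition; I would present it first as a warm-up and then do part (2) with the refined inequality $x^2 + y^2 \ge 2x^2$ highlighted.
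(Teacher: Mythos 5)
Your overall approach---invoking the modular inversion of $\eta$ and then controlling the $(q';q')_\infty^{-1}$ factor---is exactly what the paper intends (it cites modularity of $\eta$ plus bounds for $p(n)$ and calls the rest a ``direct calculation''). Part (1) is essentially sound: you have a small numerical slip (in the cone $|y|\le x$, one has $x^2+y^2\ge x^2$, so $\re\bigl(\tfrac{\pi^2}{6Z}\bigr)=\tfrac{\pi^2 x}{6(x^2+y^2)}\le \tfrac{\pi^2}{6x}$, not $\le\tfrac{\pi^2}{12x}$; the inequality $x^2+y^2\le 2x^2$ gives a \emph{lower} bound $\tfrac{\pi^2}{12x}$, not an upper one), but you correctly identify that the doubly exponential smallness $|q'|\le e^{-2\pi^2/x}$ beats the merely exponential growth $\exp\bigl(\tfrac{\pi^2}{6x}\bigr)$ of the prefactor, since $\tfrac{\pi^2}{6x}-\tfrac{2\pi^2}{x}<0$, so $|C|\le 1$ does survive.

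Part (2) has a genuine gap. You assert the $q'$-factor ``is now bounded since $\re(4\pi^2/Z)\ge 0$ always and in fact $\ge$ a small positive quantity.'' This is the wrong direction: a small positive lower bound on $\re(4\pi^2/Z)$ means $|q'|=e^{-\re(4\pi^2/Z)}$ is \emph{close to} $1$, so $(|q'|;|q'|)_\infty^{-1}$ is \emph{large}, not bounded. Concretely, for $|y|$ near $\pi$ one has $\re(4\pi^2/Z)=\tfrac{4\pi^2 x}{x^2+y^2}\approx 4x\to 0$, hence $|q'|\to 1$ and the $q'$-factor is of size roughly $\exp\bigl(\tfrac{\pi^2}{24x}\bigr)$---unbounded as $x\to 0$. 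You also have the geometry reversed: the $q'$-factor is near $1$ precisely when $|y|\approx x$ (there $\re(4\pi^2/Z)\approx 2\pi^2/x$ is huge), and is worst when $|y|\approx\pi$.

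The argument can be repaired, but it requires a joint optimization that cannot be decoupled. Using your bound $(|q'|;|q'|)_\infty^{-1}\le\exp\bigl(\tfrac{\pi^2|q'|}{6(1-|q'|)}\bigr)\le \exp\bigl(\tfrac{\pi^2}{6x'}\bigr)$ with $x':=\re(4\pi^2/Z)=\tfrac{4\pi^2 x}{|Z|^2}$, the two exponential factors combine to
\[
\exp\!\left(\frac{\pi^2 x}{6|Z|^2}+\frac{|Z|^2}{24x}\right),
\]
and the exponent, viewed as a convex function of $t=|Z|^2$ on $[2x^2,\,x^2+\pi^2]$, is maximized at the endpoint $t=2x^2$ (i.e.\ $|y|=x$), where it equals $\tfrac{\pi^2}{12x}+\tfrac{x}{12}$. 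Crucially, the worst case for $\exp(\re(\pi^2/(6Z)))$ (namely $|y|=x$) is exactly where the $q'$-factor is near $1$, and vice versa; multiplying the separate worst cases would give the wrong answer $\exp(\pi^2/(8x))$. Including the prefactor $\sqrt{|Z|/(2\pi)}$, which at $|y|=x$ contributes the $O(\sqrt{x})$ in the stated bound, and checking the other endpoint $|y|=\pi$ numerically, yields $2\sqrt{x}\exp(\pi^2/(12x))$. This endpoint comparison in $|Z|^2$ is the missing step in your sketch.
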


{We also require an approximation for $p(n)$ with bounded error}. Using Rademacher's convergent series expansion for the partition function $p(n)$ \cite{Rademacher1937} one can show the following asymptotic formula (see \cite[Lemma 5.6]{Zhou2021} for a non-explicit version).

\begin{lemma}\label{lem:partition_asymp}
	For all $n\in\N$, {there exists a constant $|\Cc|\le\frac18$ for which}
		\begin{equation*}
		p(n) = \frac{1}{4\sqrt{3}}\frac{e^{\pi\sqrt{\frac23\left(n-\frac{1}{24}\right)}}}{n-\frac{1}{24}} - \frac{1}{4\sqrt{2}\pi}\frac{e^{\pi \sqrt{\frac23\left(n-\frac{1}{24}\right)}}}{\left(n-\frac{1}{24}\right)^\frac32} + \frac{\Cc e^{\frac\pi2\sqrt{\frac23\left(n-\frac{1}{24}\right)}}}{n-\frac{1}{24}}.
	\end{equation*}
\end{lemma}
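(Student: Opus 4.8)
The plan is to derive the formula from Rademacher's exact convergent series \cite{Rademacher1937},
\[
	p(n)=\frac{1}{\pi\sqrt{2}}\sum_{k=1}^{\infty}A_{k}(n)\sqrt{k}\,\frac{d}{dn}\!\left(\frac{\sinh\!\left(\frac{\pi}{k}\sqrt{\frac{2}{3}\left(n-\frac{1}{24}\right)}\right)}{\sqrt{n-\frac{1}{24}}}\right),
\]
where $A_{k}(n)=\sum_{\substack{0\le h<k\\(h,k)=1}}e^{\pi i s(h,k)-\frac{2\pi i n h}{k}}$ ($s(h,k)$ the Dedekind sum); here $A_{1}(n)=1$ and the triangle inequality gives $|A_{k}(n)|\le\varphi(k)$, in particular $|A_{2}(n)|\le1$. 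Write $N:=n-\frac{1}{24}$ and $\mu:=\pi\sqrt{\frac{2}{3}N}$, so that $\mu$ is the exponent in the statement and $\frac{\mu}{k}=\frac{\pi}{k}\sqrt{\frac{2}{3}N}$. A direct differentiation (using $\frac{d\mu}{dN}=\frac{\mu}{2N}$) gives
\[
	\frac{d}{dn}\!\left(\frac{\sinh(\mu/k)}{\sqrt{N}}\right)=\frac{\frac{\mu}{k}\cosh\frac{\mu}{k}-\sinh\frac{\mu}{k}}{2N^{3/2}}.
\]

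First I would peel off the $k=1$ term. Since $A_{1}(n)=1$, expressing $\cosh\mu,\sinh\mu$ via exponentials and using $\frac{\mu}{N^{3/2}}=\frac{\pi\sqrt{2/3}}{N}$ together with $\frac{\pi\sqrt{2/3}}{2\pi\sqrt{2}}=\frac{1}{2\sqrt{3}}$, the $k=1$ term equals
\[
	\frac{1}{4\sqrt{3}}\frac{e^{\mu}}{N}-\frac{1}{4\sqrt{2}\,\pi}\frac{e^{\mu}}{N^{3/2}}+E_{0}(n),\qquad E_{0}(n):=\frac{1}{4\sqrt{3}}\frac{e^{-\mu}}{N}+\frac{1}{4\sqrt{2}\,\pi}\frac{e^{-\mu}}{N^{3/2}},
\]
the first two summands being precisely the main terms asserted in the statement. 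Hence it suffices to show that
\[
	\mathcal{C}:=\frac{N}{e^{\mu/2}}\left(E_{0}(n)+\frac{1}{\pi\sqrt{2}}\sum_{k\ge2}A_{k}(n)\sqrt{k}\,\frac{\frac{\mu}{k}\cosh\frac{\mu}{k}-\sinh\frac{\mu}{k}}{2N^{3/2}}\right)
\]
satisfies $|\mathcal{C}|\le\frac{1}{8}$. The term $E_{0}(n)$ is harmless: for every $n\ge1$ one has $\mu\ge\mu|_{n=1}>\frac{5}{2}$, so $E_{0}(n)$ is at most a small multiple of $e^{-\mu}/N$, which is negligible against $e^{\mu/2}/N$ (its contribution to $\mathcal{C}$ is $O(e^{-3\mu/2})$).

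The real work is the tail. Set $g(t):=\frac{t\cosh t-\sinh t}{t^{3}}=\frac{\cosh t}{t^{2}}-\frac{\sinh t}{t^{3}}$; from $g(t)=\sum_{j\ge1}\frac{2j\,t^{2j-2}}{(2j+1)!}$ one sees $g$ is positive and increasing on $(0,\infty)$, with $g(t)\le\frac{\cosh t}{t^{2}}$. Using $\frac{\mu}{k}\cosh\frac{\mu}{k}-\sinh\frac{\mu}{k}=(\mu/k)^{3}g(\mu/k)$ and $\mu^{3}=(\pi\sqrt{2/3})^{3}N^{3/2}$, the $k$-th summand collapses to $\frac{A_{k}(n)}{2\pi\sqrt{2}}\cdot\frac{(\pi\sqrt{2/3})^{3}}{k^{5/2}}\,g(\mu/k)$, in which every power of $N$ has cancelled. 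The term $k=2$ is the bottleneck: with $|A_{2}(n)|\le1$, $g(\mu/2)\le\frac{\cosh(\mu/2)}{(\mu/2)^{2}}$, $\cosh(\mu/2)\le\frac{1}{2}(1+e^{-\mu})e^{\mu/2}$ and $\mu^{2}=\frac{2\pi^{2}}{3}N$, this contributes at most about $\frac{1}{10}\cdot\frac{e^{\mu/2}}{N}$ — the same order as the error term in the statement, but with constant strictly below $\frac{1}{8}$. For $k\ge3$ I would use $|A_{k}(n)|\le\varphi(k)$, the monotonicity bound $g(\mu/k)\le g(\mu/3)\le\frac{9\cosh(\mu/3)}{\mu^{2}}$, and $\sum_{k\ge3}\frac{\varphi(k)}{k^{5/2}}=\frac{\zeta(3/2)}{\zeta(5/2)}-1-2^{-5/2}$, which bounds the $k\ge3$ part by a constant times $\frac{e^{\mu/3}}{N}$; since $e^{\mu/3}/e^{\mu/2}=e^{-\mu/6}\to0$ this is eventually negligible. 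Adding the three pieces yields an explicit $n_{0}$ of moderate size (at most a few hundred) with $|\mathcal{C}|\le\frac{1}{8}$ for $n\ge n_{0}$, and one finishes by computing $p(n)$ directly for the finitely many $n<n_{0}$ and checking the inequality numerically (it holds with room to spare). This is the argument of \cite[Lemma 5.6]{Zhou2021} with all constants made effective.

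The main obstacle is precisely the $k=2$ term: its size is $\asymp e^{\mu/2}/N$, exactly the order of the claimed error, so the constant $\frac{1}{8}$ allows little room. One must use $|A_{2}(n)|\le1$ (one may also invoke the finer $A_{2}(n)=(-1)^{n}$) rather than the cruder $|A_{2}(n)|\le2$, and estimate $g(\mu/2)$ by its leading behaviour $\cosh(\mu/2)/(\mu/2)^{2}$ and not by anything coarser; only then does the clean inequality take effect, and only once $\mu$ is large enough that $e^{\mu/3}$ is dwarfed by $e^{\mu/2}$. Keeping $n_{0}$ small enough that the remaining finitely many cases can be dispatched by direct computation is where the quantitative care is required.
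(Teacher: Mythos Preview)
Your proposal is correct and follows exactly the approach the paper indicates: the paper's proof is the single sentence ``Using Rademacher's convergent series expansion for the partition function $p(n)$ \cite{Rademacher1937} one can show the following asymptotic formula (see \cite[Lemma 5.6]{Zhou2021} for a non-explicit version)'', and you have supplied precisely those details --- peel off $k=1$ to obtain the two main terms, bound the critical $k=2$ term (whose constant $\frac{1}{4\sqrt{6}}\approx 0.102<\frac18$ you compute correctly), and control $k\ge3$ by $e^{\mu/3}$. Your identification of the $k=2$ term as the bottleneck and the need for a small finite check is exactly right.
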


Note that \Cref{lem:partition_asymp} implies the following convenient upper bound for $n\in\N$:
\begin{equation}\label{eq:partition_ubound}
	p(n) \le \frac{e^{\pi\sqrt{\frac23\left(n-\frac{1}{24}\right)}}}{4\sqrt{3}\left(n-\frac{1}{24}\right)}.
\end{equation}
{Finally, for} $n\in\N_0$, $r\in\N$, we require strong estimates on the following ``second order'' differences
\begin{equation}\label{eq:wp_def}
	P(n,r) := p(n)-2p(n+r)+p(n+2r). 
\end{equation}
{Then one can use Lemma~\ref{lem:partition_asymp} directly to give the following key estimate.}

\begin{lemma}\label{lem:sandwich} 
Let $n\ge 100$, $r\in\N$. Then we have
\[
\frac{r^2 e^{\pi\sqrt{\frac 23\rb{n-\frac{1}{24}}}}}{6\rb{n+2r-\frac 1{24}}^2}  \le P(n,r) \le \frac{\pi^2 r^2 e^{\pi\sqrt{\frac 23 \rb{n+2r-\frac 1{24}}}}}{24\sqrt{3}\rb{n-\frac 1{24}}^2}. 
\]
\end{lemma}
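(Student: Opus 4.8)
The plan is to prove the two-sided estimate for $P(n,r) = p(n) - 2p(n+r) + p(n+2r)$ by treating it as a discretized second derivative and applying \Cref{lem:partition_asymp} to each of the three terms. Write $g(x) := \frac{1}{4\sqrt 3} \frac{e^{\pi\sqrt{\frac 23(x - 1/24)}}}{x - 1/24}$ for the leading term in \Cref{lem:partition_asymp}, $h(x)$ for the (negative) secondary term, and bundle the Rademacher tail into an error $E(x)$ with $|E(x)| \le \frac 18 e^{\frac\pi 2 \sqrt{\frac 23(x-1/24)}}/(x - 1/24)$. Then $P(n,r) = \big(g(n) - 2g(n+r) + g(n+2r)\big) + \big(h(n) - 2h(n+r) + h(n+2r)\big) + \big(E(n) - 2E(n+r) + E(n+2r)\big)$, and the task reduces to (i) showing the $g$-difference is positive and of the right order, (ii) showing the $h$-difference and the $E$-difference are negligible compared to the lower bound we want, and dominated by the upper bound we want.

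First I would handle the main term. Since $g$ is smooth and, on the relevant range, convex with $g'' > 0$, the second difference $g(n) - 2g(n+r) + g(n+2r) = \int_0^r\!\int_0^r g''(n + s + t)\, ds\, dt$ is positive; more precisely it lies between $r^2 \min_{[n,n+2r]} g''$ and $r^2 \max_{[n,n+2r]} g''$. A direct computation gives $g''(x) = \frac{1}{4\sqrt 3} e^{\pi\sqrt{\frac 23(x-1/24)}} \cdot \frac{\pi^2}{3(x-1/24)^2}\big(1 + O(x^{-1/2})\big)$ — the dominant contribution coming from differentiating the exponential twice, which produces the factor $(\pi\sqrt{2/3})^2 / (4 (x-1/24)) \cdot 1/(2\sqrt3) $, i.e. essentially $\frac{\pi^2}{6}\cdot \frac{g(x)}{x - 1/24}$, whose size is $\asymp e^{\pi\sqrt{\frac 23 (x-1/24)}}/x^{3/2}$. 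Evaluating the minimum of $g''$ at the right endpoint $x = n + 2r$ (where the polynomial denominator is largest and, for the lower bound, one can crudely drop lower-order positive pieces) yields a quantity $\ge \frac{e^{\pi\sqrt{\frac 23(n - 1/24)}}}{6(n + 2r - 1/24)^2}$ after absorbing constants and bounding the exponential from below by its value at $x = n$; evaluating the maximum at $x = n$ and pushing the exponential up to $x = n+2r$ gives $\le \frac{\pi^2 e^{\pi\sqrt{\frac 23(n+2r - 1/24)}}}{24\sqrt 3 (n - 1/24)^2}$. The constants in the statement ($\frac 16$ and $\frac{\pi^2}{24\sqrt3}$) are deliberately lossy, which gives room to absorb all the lower-order terms.

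The secondary pieces: $h(x) = -\frac{1}{4\sqrt 2 \pi} e^{\pi\sqrt{\frac 23(x-1/24)}}/(x-1/24)^{3/2}$, so by the same integral-of-second-derivative argument $|h(n) - 2h(n+r) + h(n+2r)| \le r^2 \max |h''|$, and $h''$ is of order $e^{\pi\sqrt{\frac 23(x - 1/24)}}/x^{5/2}$ — a full power of $x^{-1}$ smaller than $g''$, hence negligible for $n \ge 100$ against the main lower bound (one checks $\max|h''|$ on $[n, n+2r]$ is, say, at most a tenth of $\min g''$ for $n \ge 100$, using monotone bounds on the ratio). For the error term, since $|E(x) - 2E(x+r) + E(x+2r)| \le 4\max_{[n,n+2r]}|E(x)|$ and $|E(x)| \le \frac 18 e^{\frac\pi 2\sqrt{\frac 23(x-1/24)}}/(x-1/24)$, this is bounded by $\frac 12 e^{\frac\pi 2 \sqrt{\frac 23(n + 2r - 1/24)}}/(n - 1/24)$; because the exponent here is half of that in the main term, for $n \ge 100$ this is exponentially smaller than $\frac{r^2}{6(n+2r - 1/24)^2} e^{\pi\sqrt{\frac 23(n-1/24)}}$ (note $r \ge 1$ and $\sqrt{n+2r} - \tfrac12\sqrt{n+2r}$ grows, while the polynomial loss is only $\asymp n^{2}$, beaten by the exponential gap) — though one must be a little careful when $r$ is comparable to $n$, in which case both bounds grow in $r$ and the exponential gap $\pi\sqrt{\tfrac 23(n-\tfrac1{24})}$ vs.\ $\tfrac\pi2\sqrt{\tfrac 23(n+2r - \tfrac1{24})}$ still favors the main term once $n \ge 100$ since $\sqrt{n} \ge \tfrac12\sqrt{n + 2r}$ fails only for $r$ large, where instead one compares against the positivity coming from $p(n+2r)$ dominating directly.

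The main obstacle I anticipate is \textbf{uniformity in $r$}: the clean "second difference $=$ double integral of $g''$" bookkeeping is easy, but making the constants come out to exactly $\frac 16$ and $\frac{\pi^2}{24\sqrt3}$ with a single threshold $n \ge 100$ valid for \emph{all} $r \in \N$ — small $r$ and $r \gg n$ alike — requires carefully choosing where to evaluate the exponential (at $n$ for the lower bound, at $n+2r$ for the upper) and verifying the lower-order/error contributions stay dominated across the whole range. The sub-case $r$ large relative to $n$ needs the observation that then $p(n+2r)$ alone already exceeds $2p(n+r)$ comfortably, so the crude endpoint bounds on $g''$ combined with the monotonicity $p(n+2r) \ge p(n+r) \ge p(n)$ close the argument; the sub-case $r = O(\sqrt n)$ is where the Taylor/integral estimate on $g''$ is genuinely needed and where one must confirm $h''$ and $E$ lose the requisite extra power of $n$.
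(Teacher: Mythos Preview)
Your approach is correct and is exactly what the paper intends: the paper omits the proof entirely, saying only that one uses \Cref{lem:partition_asymp} directly, and your second-difference-as-double-integral-of-$g''$ argument is the natural way to carry that out. One slip to correct when you write it up: the leading term of $g''(x)$ is $\tfrac{\pi^2}{24\sqrt3}\,e^{\pi\sqrt{\frac23(x-\frac1{24})}}/(x-\tfrac1{24})^2$, of order $e^{\ldots}/x^2$ (not $e^{\ldots}/x^{3/2}$ as you wrote) --- indeed this is precisely where the constant $\tfrac{\pi^2}{24\sqrt3}$ in the upper bound comes from, so there is essentially no slack on that side and you do need the negativity of the $h''$-contribution and the subtractive lower-order terms in $g''$ to close it.
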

\section{The Circle Method}\label{section:cm}
We next recall a version of the Circle Method that we require for the proof of our main result.
\subsection{The setup}

For $n\in\N$, we let
\begin{equation}\label{eq:bL_def}
	\beta_n := \frac{\pi}{\sqrt{6\left(n-\frac 1{24}\right)}}, \hspace{1cm} \mathcal{C}_n := \rb{n-\frac 1{24}} \beta_n = \frac{\pi}{\sqrt 6} \sqrt{n-\frac 1{24}}.
\end{equation}

Denote by $\Cc$ the circle with radius $e^{-\b_n}$, centered at the origin, traversed counter-clockwise exactly once. Cauchy's Theorem gives 
\begin{equation}\label{eq:Cauchy_int}
	N(m,n) - N(m+1,n) = \frac{1}{2\pi}\int_{-\pi}^\pi \frac{H_m\left(e^{-Z}\right)}{\left(e^{-Z};e^{-Z}\right)_\infty}e^{nZ} dy. 
\end{equation}
We split the integral into two parts: the {\it major arc integral}
\begin{equation}\label{eq:maj_arc_int}
	\mf M := \frac{1}{2\pi} \int_{\vb{y}\le \beta_n} \frac{H_m\left(e^{-Z}\right)}{(e^{-Z};e^{-Z})_\infty} e^{nZ} dy,
\end{equation}
and the {\it minor arc integral}
\begin{equation}\label{eq:min_arc_int}
	\mf m := \frac{1}{2\pi} \int_{\beta_n \le \vb{y}\le \pi} \frac{H_m\left(e^{-Z}\right)}{(e^{-Z};e^{-Z})_\infty} e^{nZ} dy.
\end{equation}
{We bound the minor arc contributions and approximate the major arc contributions, which give our main terms.}

\subsection{Minor arc bounds}

We begin by expanding
\[
	H_m\left(e^{-Z}\right) = \sum_{r=1}^\infty (-1)^{r+1} \left(e^{-\left(\frac{3r^2}{2}+\left(m-\frac12\right)r\right)Z} - 2e^{-\left(\frac{3r^2}{2}+\left(m+\frac12\right)r\right)Z} + e^{-\left(\frac{3r^2}{2}+\left(m+\frac32\right)r\right)Z}\right).
\]
Since $m\ge 0$, we have $\frac{3 r^2}{2}+ (m+j)r \ge \frac{3 r^2}{2} + jr \ge \frac 12 (r-1)^2$ for $j \in \{-\frac 12, \frac 12, \frac 32\}$ and $r\ge 1$. Hence
\[
	\vb{H_m\left(e^{-Z}\right)} \le \sum_{r=1}^\infty \left(e^{-\left(\frac{3r^2}{2}+\left(m-\frac12\right)r\right)\b_n} + 2e^{-\left(\frac{3r^2}{2}+\left(m+\frac12\right)r\right)\b_n} + e^{-\left(\frac{3r^2}{2}+\left(m+\frac32\right)r\right)\b_n}\right) \le 4\sum_{r\ge0} e^{-\frac{\b_n r^2}{2}}. 
\]
It is not hard to see that we may bound
\begin{equation}\label{eq:Hmez_bound}
	\vb{H_m\left(e^{-Z}\right)} \le 2\sqrt{\frac{2\pi}{\b_n}} + 4.
\end{equation}
Using this, we now turn to bounding the minor arc integral $\mf m$.

\begin{proposition}\label{prp:min_arc_bound}
	For  $\b_n\le\frac{1}{10}$, we have
	\[
		\vb{\mf m} \le 13 e^{\frac {3 \mc C_n}2}. 
	\]
\end{proposition}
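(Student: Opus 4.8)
The plan is to bound the integrand on the minor arc $\b_n \le |y| \le \pi$ by combining the crude bound \eqref{eq:Hmez_bound} for $|H_m(e^{-Z})|$ with the bound \eqref{eq:Z3.1_claim2} from Lemma \ref{lem:Z3.1}(2) for $|(q;q)_\infty^{-1}|$, and with the trivial bound $|e^{nZ}| = e^{n\b_n}$. On this arc we have $Z = \b_n + iy$ with $x = \b_n$, and since $\b_n \le \tfrac{1}{10}$ the hypotheses of Lemma \ref{lem:Z3.1}(2) are satisfied. Thus the integrand is at most
\[
	\rb{2\sqrt{\tfrac{2\pi}{\b_n}} + 4} \cdot 2\sqrt{\b_n}\, \exp\rb{\tfrac{\pi^2}{12\b_n}} \cdot e^{n\b_n}.
\]
Integrating over $y \in [-\pi,\pi]$ contributes a factor at most $2\pi$, and the $\tfrac{1}{2\pi}$ in front of \eqref{eq:min_arc_int} cancels it, so
\[
	|\mf m| \le \rb{2\sqrt{\tfrac{2\pi}{\b_n}} + 4} \cdot 2\sqrt{\b_n}\, \exp\rb{\tfrac{\pi^2}{12\b_n} + n\b_n}.
\]

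Next I would simplify the exponent. By the definitions in \eqref{eq:bL_def}, $n\b_n = \mc C_n + \tfrac{1}{24}\b_n$ and $\tfrac{\pi^2}{12\b_n} = \tfrac{\pi}{12}\sqrt{\tfrac{6(n-1/24)}{1}}\cdot\tfrac{1}{\pi}\cdot\pi = \tfrac{1}{2}\mc C_n$ — more precisely $\tfrac{\pi^2}{12\b_n} = \tfrac{\pi^2}{12}\cdot\tfrac{\sqrt{6(n-1/24)}}{\pi} = \tfrac{\pi\sqrt{6(n-1/24)}}{12} = \tfrac{1}{2}\cdot\tfrac{\pi}{\sqrt6}\sqrt{n-\tfrac{1}{24}} = \tfrac{1}{2}\mc C_n$. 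Hence the exponent equals $\tfrac{3}{2}\mc C_n + \tfrac{1}{24}\b_n$. Since $\b_n \le \tfrac{1}{10}$, the factor $e^{\b_n/24} \le e^{1/240} \le 1.005$. It remains to absorb the algebraic prefactor $\rb{2\sqrt{2\pi/\b_n} + 4}\cdot 2\sqrt{\b_n} = 4\sqrt{2\pi} + 8\sqrt{\b_n} \le 4\sqrt{2\pi} + 8\sqrt{1/10}$, which is bounded by a numerical constant; multiplying by $e^{1/240}$ and checking that the product is at most $13$ finishes the proof.

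There is essentially no hard step here: the proposition is a soft estimate, and the only thing to watch is bookkeeping of constants so that the final numerical bound comes out to the claimed $13$. The one point requiring a little care is the verification that $\tfrac{\pi^2}{12\b_n} = \tfrac{1}{2}\mc C_n$ exactly, which is where the normalization of $\b_n$ in \eqref{eq:bL_def} was chosen precisely so that the minor-arc exponent is $\tfrac{3}{2}\mc C_n$ rather than something larger; this is what makes the minor arc negligible against the major arc main term, which will be of size roughly $e^{2\mc C_n}$. I would double-check that $4\sqrt{2\pi} + 8/\sqrt{10} \approx 10.03 + 2.53 = 12.56$, so that after multiplying by $e^{1/240} \approx 1.004$ one gets $\approx 12.6 < 13$, leaving a small margin.
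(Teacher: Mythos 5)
Your proof is correct and follows the same route as the paper: bound $|H_m(e^{-Z})|$ by \eqref{eq:Hmez_bound}, bound $|(q;q)_\infty^{-1}|$ by Lemma \ref{lem:Z3.1}(2), use $|e^{nZ}|=e^{n\b_n}$, and let the arc length cancel the $\tfrac{1}{2\pi}$; the paper leaves the final numerical simplification to the reader (``a direct calculation gives the claim''), and your computation of $\tfrac{\pi^2}{12\b_n}=\tfrac12\Cc_n$, $n\b_n=\Cc_n+\tfrac{\b_n}{24}$, and $4\sqrt{2\pi}+8\sqrt{1/10}\approx 12.56<13/e^{1/240}$ is exactly that calculation. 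The only minor blemish is the garbled intermediate algebra in ``$\tfrac{\pi}{12}\sqrt{\tfrac{6(n-1/24)}{1}}\cdot\tfrac{1}{\pi}\cdot\pi$,'' which you correct immediately after; the rest is fine.
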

\begin{proof}
By \eqref{eq:Z3.1_claim2}, for $\beta_n \le \frac{1}{10}$ and $\beta_n \le \vb{y} \le \pi$ we have 
\[
	\vb{\frac{1}{\left(e^{-Z};e^{-Z}\right)_\infty}} \le 2\sqrt{\b_n}e^\frac{\pi^2}{12\b_n}.
\]
Hence
\[
	|\mf m| \le 4\left(\sqrt{2\pi}+2\sqrt{\b_n}\right)e^{\frac{\pi^2}{12\b_n}+n\b_n}. 
\]
Now a direct calculation gives the claim.
\end{proof}

\subsection{Major arc bounds}

Next we consider the major arc integral $\mf M$. Again we assume $\b_n\le\frac{1}{10}$. On the major arc, we have $|y|\le\b_n$, so we can use \Cref{lem:Z3.1} and obtain
\begin{equation*}
	\mf M = \frac{1}{2\pi} \int_{\vb{y}\le \beta_n} \frac{H_m\rb{e^{-Z}}e^{nZ}}{(e^{-Z};e^{-Z})_\infty} dy = \frac{1}{2\pi} \int_{\vb{y}\le \beta_n} \rb{\sqrt{\frac{Z}{2\pi}} e^{-\frac{Z}{24}+\frac{\pi^2}{6Z}} + C} H_m\rb{e^{-Z}}e^{nZ} dy,
\end{equation*}
for some $|C|\le1$. Let
\begin{equation}\label{eq:M1_def}
	\mc M^{[1]} := \frac{1}{2\pi} \int_{\vb{y}\le \beta_n} \sqrt{\frac{Z}{2\pi}} e^{\left(n-\frac 1{24}\right)Z+\frac{\pi^2}{6Z}} H_m\rb{e^{-Z}} dy,\quad \mc E^{[1]} := \mf M - \mc M^{[1]}.
\end{equation}
Then it follows from \eqref{eq:Hmez_bound} that for $\b_n\le\frac{1}{10}$, {we have}
\begin{equation}\label{eq:E1_display}
	\vb{\mc E^{[1]}} \le \frac{1}{2\pi}\int_{|y|\le\b_n} \vb{H_m\left(e^{-Z}\right)e^{nZ}} dy \le \frac{2\b_n}{\pi}\left(\sqrt{\frac{2\pi}{\b_n}}+2\right)e^{n\b_n} \le e^{\Cc_n}.
\end{equation}
 To give an estimate to $\Mc^{[1]}$, we need a bound for $H_m(e^{-Z})$. Note that $|Z|\le\sqrt{2}\b_n$. For any $L\in\mathbb N$, we approximate $H_m(e^{-Z})$ by a sum of $L-1$ terms. Specifically, we define 
 
\begin{align}\nonumber
	\Mc_{\ell,m}(Z) &:= \frac{2Z^{2\ell}}{(2\ell)!}\sum_{r=1}^\infty (-1)^{r+1}r^{2\ell}e^{-\left(\frac{3r^2}{2}+\left(m+\frac12\right)r\right)Z},\\
	\label{eq:EHm_def}
	\Ec_{L,m}(Z) &:= H_m\left(e^{-Z}\right) - \sum_{\ell=1}^{L-1} \Mc_{\ell,m}(Z).
\end{align}
Moreover, we set
\begin{equation}\label{eq:Cnew01_def}
	C_1(L,x_0) := \frac{2^{L+1} e^{\frac{x_0}2}}{(2L-1)!} \rb{\frac{\Gamma\rb{L+\frac 12}}2 + e^{-L} L^L \sqrt{x_0}}.
\end{equation}

{In terms of these quantities, we obtain the following estimate for the error term $\Ec_{L,m}(Z)$.}
\begin{lemma}\label{lem:ELmz_bound}
	Let $x_0\in\R^+$. Then we have, for $\b_n\le x_0$,
	\begin{equation*}
		\vb{\mc E_{L,m}(Z)} \le C_1(L,x_0)\b_n^{L-\frac12}e^{-\left(m+\frac12\right)\b_n}.
	\end{equation*}
\end{lemma}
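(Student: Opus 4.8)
\textbf{Proof strategy for Lemma~\ref{lem:ELmz_bound}.}
The plan is to expand each exponential $e^{-(\frac{3r^2}{2}+(m+j)r)Z}$ appearing in $H_m(e^{-Z})$ and in the partial sum $\sum_{\ell=1}^{L-1}\Mc_{\ell,m}(Z)$ and track the tail. Starting from the identity $(q^{-r/2}-q^{r/2})^2 = q^{-r}-2+q^r$, we have already recorded
\[
	H_m(e^{-Z}) = \sum_{r=1}^\infty (-1)^{r+1}\left(e^{-\left(\frac{3r^2}{2}+\left(m+\frac12\right)r\right)Z}\right)\left(e^{rZ}-2+e^{-rZ}\right).
\]
The crucial observation is that $e^{rZ}-2+e^{-rZ} = 2\sum_{\ell=1}^\infty \frac{(rZ)^{2\ell}}{(2\ell)!}$, which is exactly the generating identity making $\Mc_{\ell,m}(Z)$ the $\ell$-th term in this Taylor expansion; thus $\Ec_{L,m}(Z)$ is precisely the sum over $\ell\ge L$:
\[
	\Ec_{L,m}(Z) = \sum_{r=1}^\infty (-1)^{r+1} e^{-\left(\frac{3r^2}{2}+\left(m+\frac12\right)r\right)Z}\cdot 2\sum_{\ell\ge L}\frac{(rZ)^{2\ell}}{(2\ell)!}.
\]
First I would bound the inner Taylor tail: since $|Z|\le\sqrt2\,\b_n\le\sqrt2\,x_0$, one has $2\sum_{\ell\ge L}\frac{(r|Z|)^{2\ell}}{(2\ell)!}\le \frac{2(r|Z|)^{2L}}{(2L)!}\cosh(r|Z|)\le \frac{2(r|Z|)^{2L}}{(2L)!}e^{r|Z|}$, or some similarly clean majorant; the point is to pull out $\frac{|Z|^{2L}}{(2L)!}r^{2L}$ and leave behind something absorbed by the Gaussian decay $e^{-\frac{3r^2}{2}\re(Z)}$ in the prefactor.

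Next I would estimate $\re(Z)\ge \b_n$ (since $Z=\b_n+iy$ with the integration contour, though here $Z$ is just a point with $\re Z=\b_n$) and bound
\[
	\vb{\Ec_{L,m}(Z)} \le \frac{2|Z|^{2L}}{(2L)!} e^{-\left(m+\frac12\right)\b_n}\sum_{r=1}^\infty r^{2L} e^{-\left(\frac{3r^2}{2}-r\right)\b_n -(m-\tfrac12)r\b_n +\, r|Z|}.
\]
Using $m\ge 0$ and $|Z|\le\sqrt2\b_n$, the $r$-dependent exponent is dominated, so the sum is at most $\sum_{r\ge1}r^{2L}e^{-c r^2\b_n}$ for a suitable constant. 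Such a sum I would compare to the integral $\int_0^\infty t^{2L}e^{-ctr^2\b_n}\,dt \sim \tfrac12\Gamma(L+\tfrac12)(c\b_n)^{-L-1/2}$; more precisely, the intended shape of $C_1(L,x_0)$ in \eqref{eq:Cnew01_def} strongly suggests using \Cref{lem:Bessel_segment} (or its proof technique) to compare $\sum_r r^{2L}e^{-\frac{3r^2}{2}\b_n}e^{r\b_n}$-type sums with $I$-Bessel integrals, where the $\frac{\Gamma(L+1/2)}{2}$ and $e^{-L}L^L\sqrt{x_0}$ terms come from the two standard pieces of the Bessel estimate (the Gamma-function main term and the $e^{3u/4}$-type error). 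Collecting $|Z|^{2L}\le 2^L\b_n^{2L}$ against $(2L)!$ (or $(2L-1)!$ after one factor of $2L$ is spent on the $r^{2L}$ versus $r^{2L-1}$ bookkeeping), and extracting $\b_n^{2L}\cdot\b_n^{-L-1/2}=\b_n^{L-1/2}$, should produce exactly the claimed bound $C_1(L,x_0)\b_n^{L-1/2}e^{-(m+1/2)\b_n}$.

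The main obstacle I anticipate is the careful bookkeeping needed to land on the \emph{exact} constant $C_1(L,x_0)$ as written in \eqref{eq:Cnew01_def} rather than a messier one: one must be disciplined about (i) the factor $|Z|\le\sqrt2\b_n$ versus $|Z|^2\le 2\b_n^2$, deciding where powers of $2$ go; (ii) relating $(2L)!$ and $(2L-1)!$ and where the stray factor of $r$ (turning $r^{2L}$ into an $r^{2L-1}$ summand) is spent; (iii) handling the shift from $\sum_r$ to an integral cleanly — the Euler--Maclaurin formula \eqref{eq:EM} applied to $f(t)=t^{2L}e^{-\frac{3t^2}{2}\b_n+t\b_n}$ gives the main integral plus a boundary term $\tfrac12 f(1)$ plus a derivative-integral, and one checks these auxiliary pieces are dominated by the $e^{-L}L^L\sqrt{x_0}$ contribution; and (iv) the uniformity in $m$, which is essentially free since $m\ge 0$ only helps, letting us factor out $e^{-(m+1/2)\b_n}$ and bound the residual $e^{-(m-1/2)r\b_n}\le e^{(1/2)\b_n}\le e^{x_0/2}$ contributing the $e^{x_0/2}$ in $C_1$. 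None of these steps is deep, but the precise constant forces one to be meticulous; modulo that arithmetic, the structure of the argument is the routine "Taylor-tail plus Gaussian sum compared to a Bessel/Gamma integral" computation standard in Wright's Circle Method.
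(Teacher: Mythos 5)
Your strategy is essentially the paper's: expand the central factor $(e^{rZ/2}-e^{-rZ/2})^2$ as a Taylor series in $rZ$, identify $\Ec_{L,m}(Z)$ as the tail, pull out $e^{-(m+\frac12)\b_n}$ using $m\ge0$, and compare the resulting $\sum_r r^{2L}e^{-r^2\b_n}$ to an integral via Euler--Maclaurin. Two small technical corrections, both of which you half-anticipate. First, the paper uses Taylor's theorem with the \emph{integral} remainder rather than the $\cosh$ majorant; this gives the bound $\frac{2(r|Z|)^{2L}}{(2L-1)!}e^{r\b_n}$, where the exponent is $e^{r\b_n}$ (not $e^{r|Z|}\le e^{\sqrt2 r\b_n}$) because along the radial path from $0$ to $rZ$ one has $\re u\le r\b_n$, so $|\partial_u^{2L}(e^{-u/2}-e^{u/2})^2|=|e^{-u}+e^u|\le 2e^{r\b_n}$. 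Your $\cosh$ route would give $(2L)!$ in the denominator (not $(2L-1)!$ as in $C_1$) and a $\sqrt2$ bloat in the Gaussian exponent, breaking the clean reduction to $\sum_r r^{2L}e^{-r^2\b_n}$; this is precisely where the $(2L-1)!$ and $e^{x_0/2}$ in \eqref{eq:Cnew01_def} originate. Second, \Cref{lem:Bessel_segment} is a red herring: the two pieces of $C_1$ are the exact Gaussian integral $\int_0^\infty u^{2L}e^{-\b_n u^2}\,du=\frac{\Ga(L+\frac12)}{2}\b_n^{-L-\frac12}$ and the fundamental-theorem-of-calculus bound $\frac12\int_0^\infty|\partial_u(u^{2L}e^{-\b_nu^2})|\,du=\max_u u^{2L}e^{-\b_nu^2}=(L/(e\b_n))^L$ on the $B_1$ term in \eqref{eq:EM} --- exactly the Euler--Maclaurin decomposition you already sketch in your item (iii). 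Abandon the Bessel detour, use the integral remainder, and the calculation closes.
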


\begin{proof}
	Using Taylor's Theorem, we obtain for $L\in\N$ 
	\begin{equation*}
		\left(e^\frac w2-e^{-\frac w2}\right)^2 = w^{2\ell}\sum_{\ell=1}^{L-1} \frac{2}{(2\ell)!} + \frac{1}{(2L-1)!}\int_0^w (w-u)^{2L-1}\left(\pd{}{u}\right)^{2L}\left(e^{-\frac u2}-e^{\frac u2}\right)^2 du.
	\end{equation*}
	Letting $w=rZ$ yields the bound
	\begin{equation*}
		\vb{\left(e^\frac{rZ}{2}-e^{-\frac{rZ}{2}}\right)^2 - 2\sum_{\ell=1}^{L-1} \frac{(rZ)^{2\ell}}{(2\ell)!}} \le \frac{2}{(2L-1)!}(r|Z|)^{2L}e^{r\b_n}.
	\end{equation*}
	Note that
	\begin{equation}\label{eq:EHm_extract}
		|\Ec_{L,m}(Z)| \le \frac{2^{L+1}\b_n^{2L}e^{-\left(m+\frac12\right)\b_n}}{(2L-1)!}\sum_{r=1}^\infty r^{2L}e^{-\left(\frac{3r^2}{2}+\left(m+\frac12\right)(r-1)\right)\b_n+r\b_n}. 
	\end{equation}
	Next we estimate the sum in \eqref{eq:EHm_extract}. Since $m\ge0$, we have $\frac{3r^2}{2}+(m+\frac12)(r-1)-r\ge\frac{3r^2}{2}+\frac12(r-1)-r\ge r^2-\frac12$ for $r\ge1$. We deduce that
	\begin{equation}\label{eq:EHm_quadexp}
		\sum_{r=1}^\infty r^{2L}e^{-\left(\frac{3r^2}{2}+\left(m+\frac12\right)(r-1)\right)\b_n+r\b_n} \le e^\frac{\b_n}2\sum_{r=1}^\infty r^{2L}e^{-r^2\b_n}.
	\end{equation}
	The Euler--Maclaurin summation formula \eqref{eq:EM} gives (using that $L>0$)
	\begin{equation}\label{eq:EHm_EMF}
		\sum_{r=1}^\infty r^{2L}e^{-r^2\b_n} = \int_0^\infty u^{2L}e^{-\b_n u^2} du + \int_0^\infty \pd{}{u} \left(u^{2L}e^{-\b_nu^2}\right)B_1(u-\flo{u}) du.
	\end{equation}
	For the first integral in \eqref{eq:EHm_EMF}, we evaluate (see \cite[3.326.2]{GR2007})
	\begin{equation}\label{eq:EHm_EMF1}
		\int_0^\infty u^{2L} e^{-\beta_nu^2} du = \frac{\Gamma\rb{L+\frac 12}}2 \beta_n^{-L-\frac 12}. 
	\end{equation}
	Since $\vb{B_1(u-\lfloor u\rfloor)} \le \frac 12$, the absolute value of the second integral in \eqref{eq:EHm_EMF} is bounded by
	\begin{equation}\label{eq:EHm_EMF2}
		\frac12\int_0^\infty \vb{\frac{\del}{\del u} \left(u^{2L}e^{-\b_nu^2}\right)} du = \left(\frac{L}{e\b_n}\right)^L,
	\end{equation}
	using the Fundamental Theorem of Calculus. Plugging \eqref{eq:EHm_EMF1} and \eqref{eq:EHm_EMF2} into \eqref{eq:EHm_EMF}, we obtain
	\[
		\sum_{r=1}^\infty r^{2L}e^{-r^2\b_n} \le \frac{\Ga\left(L+\frac12\right)}{2}\b_n^{-L-\frac12} + \left(\frac{L}{e\b_n}\right)^L.
	\]
	Putting this into \eqref{eq:EHm_quadexp} then yields
	\[
		\sum_{r=1}^\infty r^{2L}e^{-\left(\frac{3r^2}{2}+\left(m+\frac12\right)(r-1)\right)\b_n+r\b_n} \le e^\frac{\b_n}{2}\left(\frac{\Ga\left(L+\frac12\right)}{2}\b_n^{-L-\frac12}+\left(\frac{L}{e\b_n}\right)^L\right).
	\]
	Finally, inserting this into \eqref{eq:EHm_extract} yields 
	\[
		\vb{\mc E_{L,m}(Z)} \le \frac{2^{L+1} \beta_n^{2L} e^{-\left(m+\frac 12\right)\beta_n+\frac{\beta_n}2}}{(2L-1)!} \rb{\frac{\Gamma\rb{L+\frac 12}}2 \beta_n^{-L-\frac 12} + \rb{\frac{L}{e\beta_n}}^L}.
	\]
	This gives the claim.
\end{proof}

{Thus, we may estimate $H_m(e^{-Z})$ by  $\sum_{\ell=1}^{L-1} \Mc_{\ell,m}(Z)$ up to a known error bound. }
We next estimate the terms $\Mc_{\ell,m}(Z)$. We first recall a result from \cite{LZ2022}.

\begin{proposition}[{\cite[Proposition 2.3]{LZ2022}}]\label{prp:LZ2.3}
	Let $w,Z\in\C$, $\re(Z)>0$, $w\not\in 2\pi i(\Z+\frac 12)$, $\ell\in\N_0$, and $K\in\N$. Then we have
	\[
		\sum_{r=1}^\infty (-1)^{r+1} r^\ell e^{-r^2Z-rw} = (-1)^\ell \sum_{k=0}^{K-1} \frac{(-Z)^k}{k!} \left(\frac{\partial}{\partial w}\right)^{\ell+2k} {\frac{1}{1+e^w}} + Z^K R_K(\ell;w,Z),
	\]
	where
	\begin{equation}\label{eq:LZ2.3_RK}
		R_K(\ell;w,Z) := \frac{(-1)^{\ell+K}}{(K-1)!}\int_0^1 (1-t)^{K-1}\left(\pd{}{w}\right)^{\ell+2K} \sum_{r=1}^\infty (-1)^{r+1}e^{-r^2Zt-rw} dt.
	\end{equation}
\end{proposition}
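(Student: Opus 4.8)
The plan is to derive the identity from Taylor's theorem with integral remainder applied to the single exponential $e^{-r^2Z}$, and then to sum over $r$. The starting point is the elementary expansion, valid for all $a\in\C$ and $K\in\N$,
\[
	e^{a} = \sum_{k=0}^{K-1}\frac{a^k}{k!} + \frac{a^K}{(K-1)!}\int_0^1(1-t)^{K-1}e^{at}\,dt,
\]
which is just the integral form of the Taylor remainder for $t\mapsto e^{at}$ on $[0,1]$. Taking $a=-r^2Z$, multiplying through by $(-1)^{r+1}r^\ell e^{-rw}$, and using $(-r^2Z)^k=r^{2k}(-Z)^k$, this gives for each fixed $r\ge1$
\begin{align*}
	(-1)^{r+1}r^\ell e^{-r^2Z-rw} &= \sum_{k=0}^{K-1}\frac{(-Z)^k}{k!}(-1)^{r+1}r^{\ell+2k}e^{-rw}\\
	&\quad+ \frac{(-Z)^K}{(K-1)!}\int_0^1(1-t)^{K-1}(-1)^{r+1}r^{\ell+2K}e^{-r^2Zt-rw}\,dt.
\end{align*}

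I would then sum over $r\ge1$. For $\re(Z)>0$ the left-hand series converges absolutely, and for $\re(w)>0$ so does each of the finitely many sums $\sum_r(-1)^{r+1}r^{\ell+2k}e^{-rw}$. The one step requiring care is the interchange of $\sum_{r\ge1}$ with $\int_0^1$ in the remainder term; this is legitimate by Fubini's theorem, since for $\re(w)>0$
\[
	\sum_{r\ge1}\int_0^1(1-t)^{K-1}r^{\ell+2K}e^{-r^2t\re(Z)-r\re(w)}\,dt \le \frac1K\sum_{r\ge1}r^{\ell+2K}e^{-r\re(w)} < \infty,
\]
where we dropped the harmless factor $e^{-r^2t\re(Z)}\le1$ to make the bound uniform in $t$. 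This produces, for $\re(Z)>0$ and $\re(w)>0$,
\begin{align*}
	\sum_{r=1}^\infty(-1)^{r+1}r^\ell e^{-r^2Z-rw} &= \sum_{k=0}^{K-1}\frac{(-Z)^k}{k!}\sum_{r=1}^\infty(-1)^{r+1}r^{\ell+2k}e^{-rw}\\
	&\quad+ \frac{(-Z)^K}{(K-1)!}\int_0^1(1-t)^{K-1}\sum_{r=1}^\infty(-1)^{r+1}r^{\ell+2K}e^{-r^2Zt-rw}\,dt.
\end{align*}

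Next I would recognize the inner $r$-sums as $w$-derivatives. For $\re(w)>0$ the geometric series identity $\sum_{r\ge1}(-1)^{r+1}e^{-rw}=\tfrac{1}{1+e^w}$ holds; differentiating $\ell+2k$ times in $w$ (permissible since this series and all its term-by-term derivatives converge locally uniformly on $\re(w)>0$) and using $(-1)^{\ell+2k}=(-1)^\ell$ gives $\sum_{r\ge1}(-1)^{r+1}r^{\ell+2k}e^{-rw}=(-1)^\ell\left(\frac{\partial}{\partial w}\right)^{\ell+2k}\tfrac{1}{1+e^w}$. In the same way, for each fixed $t\in(0,1]$ the Gaussian factor makes $\sum_r(-1)^{r+1}e^{-r^2Zt-rw}$ converge locally uniformly in $w$, so $\sum_{r\ge1}(-1)^{r+1}r^{\ell+2K}e^{-r^2Zt-rw}=(-1)^\ell\left(\frac{\partial}{\partial w}\right)^{\ell+2K}\sum_{r\ge1}(-1)^{r+1}e^{-r^2Zt-rw}$. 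Substituting both identities into the previous display and pulling the sign out of $(-Z)^K=(-1)^KZ^K$ reproduces exactly the asserted formula, with the remainder equal to $Z^KR_K(\ell;w,Z)$ for $R_K$ as in \eqref{eq:LZ2.3_RK}, at least for $\re(Z)>0$ and $\re(w)>0$.

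Finally I would remove the restriction $\re(w)>0$ by analytic continuation: for fixed $Z$ with $\re(Z)>0$ the left-hand side is entire in $w$, while $\sum_{k=0}^{K-1}\frac{(-Z)^k}{k!}\left(\frac{\partial}{\partial w}\right)^{\ell+2k}\tfrac{1}{1+e^w}$ is holomorphic on the connected domain $\C\setminus2\pi i(\Z+\tfrac12)$, hence so is $Z^KR_K(\ell;w,Z)$; since the two sides agree on the half-plane $\re(w)>0$ they agree throughout $\C\setminus2\pi i(\Z+\tfrac12)$ by the identity theorem. One also checks that the explicit integral in \eqref{eq:LZ2.3_RK} continues to represent this function wherever it converges — in particular throughout $\re(w)\ge0$ — by invoking the Jacobi theta transformation of $\sum_r(-1)^{r+1}e^{-r^2s-rw}$, which shows that this series and its $w$-derivatives remain bounded as $\re(s)\to0^+$; since $w$ has nonnegative real part in every application made in this paper, this point is immaterial. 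I expect the only genuinely delicate issue to be precisely this interplay between the sum--integral interchange and the range of validity of the remainder integral — a naive estimate of $\sum_r(-1)^{r+1}r^{\ell+2K}e^{-r^2Zt-rw}$ as $t\to0^+$ fails unless one exploits either $\re(w)>0$ or the modularity of the underlying theta series — whereas all remaining steps are routine Taylor expansion and termwise differentiation of rapidly convergent series.
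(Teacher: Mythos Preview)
Your argument is correct and is the natural one: Taylor's formula with integral remainder applied to $e^{-r^2Z}$, summation over $r$ with Fubini, identification of the inner sums as $w$-derivatives of $\frac{1}{1+e^w}$, and analytic continuation in $w$. Note, however, that the paper does not give its own proof of this proposition at all; it is quoted verbatim from \cite[Proposition~2.3]{LZ2022} and used as a black box, so there is nothing in the present paper to compare your argument against. Your proof is essentially the one a reader would reconstruct, and indeed matches the standard derivation; the only point worth flagging is that your analytic continuation step handles the \emph{identity} for all $w\notin 2\pi i(\Z+\tfrac12)$, but the explicit integral representation \eqref{eq:LZ2.3_RK} for $R_K$ is only justified where the sum--integral interchange is valid --- you acknowledge this and correctly note that every application in the paper has $\re(w)\ge 0$, which is enough.
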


We would like to apply \Cref{prp:LZ2.3} to estimate $\Mc_{\ell,m}(Z)$, following the proof of \cite[Proposition 2.6]{LZ2022}. To this end, we show the following.

\begin{proposition}\label{prop:Cnew11_def}
	Assuming the notation above, we have
	\begin{multline*}
		\vb{\Mc_{\ell,m}(Z)-\frac{2}{(2\ell)!}\sum_{k=0}^{K-1} \frac{\left(-\frac32\right)^kZ^{2\ell+k}}{k!}\left[\left(\pd{}{w}\right)^{2\ell+2k} \frac{1}{1+e^w}\right]_{w=\left(m+\frac12\right)Z}}\\
		\le C_2(\ell,K,J,x_0,\vr)\b_n^{2\ell+K}e^{-\left(m+\frac12\right)\b_n},
	\end{multline*}
	 where the constant $C_2$ is explicitly constructed in the proof.
\end{proposition}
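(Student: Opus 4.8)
The plan is to apply Proposition~\ref{prp:LZ2.3} with $w = (m+\tfrac12)Z$, $\ell \mapsto 2\ell$, and $Z \mapsto \tfrac32 Z$ to the defining sum
\[
	\sum_{r=1}^\infty (-1)^{r+1} r^{2\ell} e^{-\left(\frac{3r^2}{2}+\left(m+\frac12\right)r\right)Z} = \sum_{r=1}^\infty (-1)^{r+1} r^{2\ell} e^{-r^2\left(\frac32 Z\right) - r\left(\left(m+\frac12\right)Z\right)},
\]
so that $\Mc_{\ell,m}(Z) = \frac{2Z^{2\ell}}{(2\ell)!}$ times this sum. The proposition immediately produces the displayed main term $\frac{2}{(2\ell)!}\sum_{k=0}^{K-1}\frac{(-3/2)^k Z^{2\ell+k}}{k!}\left[\left(\pd{}{w}\right)^{2\ell+2k}\frac{1}{1+e^w}\right]_{w=(m+1/2)Z}$, together with an error of the form $\frac{2Z^{2\ell}}{(2\ell)!}\cdot\left(\tfrac32 Z\right)^K R_K\!\left(2\ell; (m+\tfrac12)Z, \tfrac32 Z\right)$. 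So the entire content of the proof is to bound $|R_K|$ explicitly on the major arc, where $|Z| \le \sqrt2\,\beta_n \le \sqrt2\,x_0$ and $\re(Z) = \beta_n$.

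The key step is to control the remainder integral $R_K(2\ell; w, Z')$ from \eqref{eq:LZ2.3_RK} with $w = (m+\tfrac12)Z$, $Z' = \tfrac32 Z$. First I would differentiate the geometric-type series termwise: $\left(\pd{}{w}\right)^{2\ell+2K}\sum_{r\ge1}(-1)^{r+1}e^{-r^2 Z' t - rw} = \sum_{r\ge1}(-1)^{r+1}(-r)^{2\ell+2K}e^{-r^2 Z' t - rw}$, which converges absolutely since $\re(Z' t) = \tfrac32\beta_n t > 0$ for $t\in(0,1]$ (the $t=0$ endpoint is harmless in the integral). Taking absolute values and using $\re(w) = (m+\tfrac12)\beta_n \ge 0$ gives
\[
	\left|\left(\pd{}{w}\right)^{2\ell+2K}\sum_{r\ge1}(-1)^{r+1}e^{-r^2 Z' t - rw}\right| \le e^{-(m+\frac12)\beta_n}\sum_{r\ge1} r^{2\ell+2K} e^{-\frac32\beta_n t r^2 + r\beta_n},
\]
where I have used $\re(e^{-rw}) \le |e^{-rw}| = e^{-r(m+1/2)\beta_n} \le e^{-(m+1/2)\beta_n}$ together with $e^{-r^2 Z' t} \le e^{-\frac32\beta_n t r^2}$ — except one must be slightly careful: the crude bound $e^{-r(m+1/2)\beta_n}\le e^{-(m+1/2)\beta_n}$ loses the decay in $r$ for $r\ge 2$, so instead I would keep a factor $e^{-\frac12 r \beta_n}$ from $e^{-rw}$ to absorb the stray $e^{r\beta_n}$, bounding the tail by $e^{-(m+1/2)\beta_n}\sum_{r\ge1} r^{2\ell+2K}e^{-\frac32\beta_n t r^2 - \frac12\beta_n r + \beta_n r}$; since $-\tfrac12 r\beta_n + r\beta_n = \tfrac12 r\beta_n$ this still needs the Gaussian to dominate. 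The clean route is to note $\tfrac32 t r^2 - \tfrac12 r \ge \tfrac14 r^2 t$ fails for small $t$, so I would instead treat the $t$-integral directly: split off $e^{-r(m+1/2)\beta_n} \le e^{-(m+1/2)\beta_n} e^{-(r-1)\cdot 0}$ and bound $\sum_{r\ge1} r^{2\ell+2K} e^{-\frac32\beta_n t r^2}$ via the Euler--Maclaurin estimate \eqref{eq:EM} exactly as in the proof of Lemma~\ref{lem:ELmz_bound}, giving a bound of shape $\frac{\Gamma(\ell+K+\frac12)}{2}\left(\tfrac32\beta_n t\right)^{-\ell-K-\frac12} + \left(\frac{2(\ell+K)}{3e\beta_n t}\right)^{\ell+K}$.

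Then I would integrate this against $(1-t)^{K-1}$ over $t\in[0,1]$, as dictated by \eqref{eq:LZ2.3_RK}. The resulting integral $\int_0^1 (1-t)^{K-1} t^{-\ell-K-\frac12}\,dt$ converges provided $\ell + K + \tfrac12 < 1$, i.e. only for $\ell = K = 0$, so a naive term-by-term $t$-bound diverges — this is the main obstacle. The fix is to exploit the oscillation/regularity: rather than bounding $|\sum_r(\dots)|$ by $\sum_r|\dots|$ for all $t$, one keeps the $t$-integral inside and uses that $\int_0^1 (1-t)^{K-1} r^2 e^{-\frac32\beta_n t r^2}\,dt$ has an extra power of $r^{-2}$ built in (integrating the Gaussian in $t$ gains a factor $(\beta_n r^2)^{-1}$ up to the incomplete-Gamma tail), which after summing in $r$ and combining with the overall prefactor $\frac{2|Z|^{2\ell}}{(2\ell)!}\left(\tfrac32|Z|\right)^K$ and $|Z| \le \sqrt2\beta_n$ yields the claimed bound $C_2(\ell,K,J,x_0,\vr)\,\beta_n^{2\ell+K}e^{-(m+1/2)\beta_n}$. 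Here the parameter $J$ presumably indexes an inner truncation (e.g. of the Poisson-summation / incomplete-Gamma expansion of the remaining $r$-sum, paralleling \cite[Proposition 2.6]{LZ2022}) and $\vr$ a geometric decay rate controlling its tail; I would follow the structure of \cite[Proposition 2.6]{LZ2022} verbatim, substituting the explicit constants tracked through Lemma~\ref{lem:Bessel_segment}, the $\delta_N,\gamma_M$ bounds, and Lemma~\ref{lem:ELmz_bound}, and simply record the resulting closed-form expression for $C_2$ — the point being explicitness rather than novelty, so the bookkeeping of constants is the real (if routine) work.
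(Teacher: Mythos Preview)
Your opening move is right: applying Proposition~\ref{prp:LZ2.3} with $Z\mapsto\tfrac32 Z$ and $w=(m+\tfrac12)Z$ produces exactly the displayed main term plus $\frac{2Z^{2\ell}}{(2\ell)!}(\tfrac32 Z)^K R_K$, and you correctly identify the obstacle that a naive absolute-value bound on $R_K$ gives $\int_0^1(1-t)^{K-1}t^{-\ell-K-\frac12}\,dt$, which diverges at $t=0$.

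However, your proposed fix does not work. Swapping the $r$-sum and the $t$-integral and using $\int_0^1(1-t)^{K-1}e^{-\frac32\beta_n t r^2}\,dt\ll\min(1,(\beta_n r^2)^{-1})$ still leaves, for $r\gg\beta_n^{-1/2}$, a tail $\sum_r r^{2\ell+2K}\cdot(\beta_n r^2)^{-1}=\beta_n^{-1}\sum_r r^{2\ell+2K-2}$, which diverges. No amount of integration-by-parts in $t$ helps: near $t=0$ the factor $(1-t)^{K-1}$ is essentially $1$, so the $t$-integral is $\asymp(\beta_n r^2)^{-1}$ for large $r$, not $(\beta_n r^2)^{-K}$. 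The divergence is genuine once you drop the alternating sign $(-1)^{r}$; any argument that bounds $\big|\sum_r(-1)^r\cdots\big|$ by $\sum_r|\cdots|$ is doomed here.

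The paper's proof exploits precisely that oscillation, and this is where the parameters $J$ and $\varrho$ actually enter. One applies Lemma~\ref{lem:LZ2.4} with $z=-e^{-w}$ and $h(r)=E_{Zt}(r):=e^{-r^2Zt}$ to rewrite $\sum_{r\ge1}(-1)^r e^{-r^2Zt-rw}$ as a finite sum of boundary terms $\frac{(-1)^j e^{-jw}\Delta^j(E_{Zt}(0))}{(1+e^{-w})^{j+1}}$ for $0\le j<J$, plus a tail involving $\Delta^J(E_{Zt}(r))$. The point is that the $J$-th forward difference carries extra smallness in $t$: a Taylor expansion of $e^{-j^2Zt}$ shows $|\Delta^J(E_{Zt}(r))|\ll (xt)^{J/2}$ for moderate $r$ (see \eqref{eq:DeltaE_bound2}), and this power of $t$ is exactly what cures the $t\to0$ divergence provided $J$ is taken large enough (in practice $J\ge 2\ell+2K+4$). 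The parameter $\varrho>\tfrac12$ is then the exponent at which one splits the $r$-sum, at $r\approx(xt)^{-\varrho}$, to interpolate between this Taylor bound (good for small $r$) and the crude bound \eqref{eq:Cnew02_def2} (good for large $r$). So $J$ is the order of the forward-difference operator, not a Poisson/incomplete-Gamma truncation, and $\varrho$ is an $r$-splitting threshold, not a decay rate; your guesses about both were off, and with them the mechanism that makes the bound finite.
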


{Before beginning the proof, we  also require basic facts about difference operator. To this end for $j\in\N_0$, let $\Delta^j$ be the {\it $j$-th order forward difference operator}: 
\begin{align*}
	\De^0(h(r)) := h(r),\quad \De(h(r)) := h(r+1) - h(r),\quad
	\De^{j+1}(h(r)) := \De^{j-1}(h(r+1)) - \De^{j-1}(h(r)).
\end{align*}
For any arithmetic function $h$, we have the generating function.
\begin{lemma}[{\cite[Lemma 2.4]{LZ2022}}]\label{lem:LZ2.4}
	Let $z\in\C\setminus\{1\}$, and let $h\colon\N_0\to\C$ satisfy $\sum_{r=0}^\infty|h(r)z^r|<\infty$. Then we have, for all $J\in\N$,
	\[
		\sum_{r=0}^\infty h(r)z^r = \sum_{j=0}^{J-1} \frac{\De^j(h(0))z^j}{(1-z)^{j+1}} + \frac{z^J}{(1-z)^J}\sum_{r=0}^\infty \De^J(h(r))z^r.
	\]
\end{lemma}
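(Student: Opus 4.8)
The plan is to prove Lemma~\ref{lem:LZ2.4} by induction on $J$, viewing the identity as an iterated Abel (summation-by-parts) transform. First I would dispose of the degenerate case $z=0$: there both sides collapse to $h(0)$ (on the right only the $j=0$ term survives, with $0^0=1$), so from then on $z\ne1$ can be taken with $z\ne0$, and division by $z$ is harmless. Next I would record the one structural fact that underlies every manipulation below: since $\De^j(h(r))=\sum_{i=0}^j\binom ji(-1)^{j-i}h(r+i)$ is a finite combination of forward shifts of $h$, the hypothesis $\sum_{r\ge0}\vb{h(r)z^r}<\infty$ propagates to $\sum_{r\ge0}\vb{\De^j(h(r))z^r}<\infty$ for every $j\in\N_0$, each shifted series being dominated by $\vb z^{-i}$ times a tail of the original. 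This uniform absolute convergence is what legitimizes the reindexings and term-by-term splittings that follow, and it holds without assuming $\vb z<1$.

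For the base case $J=1$, I would expand $\frac{z}{1-z}\sum_{r\ge0}\De(h(r))z^r=\frac{z}{1-z}\sum_{r\ge0}\rb{h(r+1)-h(r)}z^r$, reindex the first sum by $s=r+1$ to obtain $\frac{1}{1-z}\rb{\sum_{r\ge0}h(r)z^r-h(0)}$, and combine it with $-\frac{z}{1-z}\sum_{r\ge0}h(r)z^r$; after adding the boundary term $\frac{h(0)}{1-z}$ the $h(0)$-contributions cancel and the remaining coefficient $\frac1{1-z}-\frac z{1-z}=1$ leaves exactly $\sum_{r\ge0}h(r)z^r$. For the inductive step, assuming the formula for a given $J\in\N$, I would apply the $J=1$ identity to the function $g(r):=\De^J(h(r))$ (permissible by the convergence remark above), which yields $\sum_{r\ge0}\De^J(h(r))z^r=\frac{\De^J(h(0))}{1-z}+\frac{z}{1-z}\sum_{r\ge0}\De^{J+1}(h(r))z^r$; multiplying through by $\frac{z^J}{(1-z)^J}$ and substituting the result for the tail term $\frac{z^J}{(1-z)^J}\sum_{r\ge0}\De^J(h(r))z^r$ in the induction hypothesis replaces it by $\frac{\De^J(h(0))z^J}{(1-z)^{J+1}}+\frac{z^{J+1}}{(1-z)^{J+1}}\sum_{r\ge0}\De^{J+1}(h(r))z^r$, which is precisely the asserted formula with $J+1$ in place of $J$. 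This closes the induction.

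There is no genuine obstacle here; the ``main obstacle'' in the only sense that applies is the justification of the infinite-sum bookkeeping (reindexing a shifted series, splitting a convergent series, extracting the $r=0$ term), all of which reduce to the uniform absolute-summability of $\De^j(h(r))z^r$ established at the outset. For completeness I would note an alternative, non-inductive route: expand $\De^j(h(0))=\sum_{i=0}^j\binom ji(-1)^{j-i}h(i)$ together with $\frac{z^j}{(1-z)^{j+1}}=\sum_{r\ge j}\binom rj z^r$, interchange the (absolutely convergent, for $\vb z<1$) double sum, and match the coefficient of each $h(r)$ on the two sides via a hockey-stick/Vandermonde cancellation; this works only for $\vb z<1$ and then requires an analytic-continuation remark to recover all $z\ne1$, so I would keep the induction as the primary argument since it is shorter and covers the full range of $z$ directly.
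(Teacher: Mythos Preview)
Your induction argument is correct: the base case $J=1$ is a clean Abel/summation-by-parts computation, and the inductive step (applying the $J=1$ case to $g=\De^Jh$ and substituting) is exactly the right move. Your convergence remark---that $\sum_r\vb{\De^j(h(r))z^r}<\infty$ follows from the hypothesis because $\De^j$ is a finite combination of forward shifts, each dominated by $\vb z^{-i}$ times a tail of the original series---is the key justification and you handle it properly, including for $\vb z\ge1$. The $z=0$ check is a nice touch.

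As for comparison with the paper: the paper does not supply its own proof of this lemma but simply quotes it from \cite{LZ2022}, so there is nothing to compare against here. Your argument is a standard and complete proof of the cited result.
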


\begin{proof}[{Proof of Proposition~\ref{prop:Cnew11_def}}]
	By \Cref{prp:LZ2.3}, we have
	\begin{multline}\label{eq:RK_applied}
		\sum_{r\ge 1} (-1)^{r+1} r^{2\ell} e^{-\rb{\frac{ 3 r^2}{2}+\left(m+\frac 12\right)r}Z} \\
		= \sum_{k=0}^{K-1} \frac{\left(-\frac {3Z}{2}\right)^k}{k!} \left[\left(\frac{\partial}{\partial w}\right)^{2\ell+2k}  {\frac{1}{1+e^w}} \right]_{w = \left(m+\frac 12\right)Z}+ \rb{\frac{3Z}{2}}^K R_K\rb{2\ell;\rb{m+\frac 12}Z,\frac{3Z}{2}}.
	\end{multline}
	Now we bound $R_K(2\ell; bZ,Z)$ for $b\ge 0$, and $Z=x+iy$ with $x\ge 0$, and $\vb{y}\le x$. We start with some technical preparations. For $0\leq x\leq x_0$, $r\in\N$, define $E_Z(r):=e^{-r^2Z}$, let $J\in\N$, and $x_0\le1$. {For $0\leq x\leq x_0$, we would like to bound}
	\begin{equation*}\label{eq:DeltaE_def}
		\De^J(E_Z(r)) = (-1)^J\sum_{j=0}^J (-1)^j\binom Jje^{-(r+j)^2Z}.
	\end{equation*}
	 We do so in two different ways. First assume that $J$ is even and observe that
	\begin{equation}\label{eq:Cnew02_def2}
		\vb{\De^J(E_Z(r))} = e^{-r^2x}\vb{\sum_{j=0}^J (-1)^j\binom Jje^{-\left(2rj+j^2\right)Z}} \le C_{2a}(J,x_0)e^{-r^2x},
	\end{equation}
	where $C_{2a}(J,x_0)$ is a constant such that for all $r\ge0$ and $Z=x+iy$ with $0\le x\le x_0$ and $|y|\le x$
	\begin{equation}\label{eq:Cnew02_def}
		\vb{\sum_{j=0}^J (-1)^j\binom Jje^{-\left(2rj+j^2\right)Z}} \le C_{2a}(J,x_0).
	\end{equation}

	Next we give another bound, which works better if $r$ is small. Note that
	\begin{equation}\label{eq:DeltaE_expand}
		e^{r^2Z}\De^J(E_Z(r)) = \sum_{j=0}^J (-1)^j \binom Jj e^{-\left(2rj+j^2\right)Z}.
	\end{equation}
	By Taylor's Theorem, we obtain
	\begin{equation}\label{eq:DeltaE_Taylor_error}
		\vb{e^{-j^2Z}-\sum_{\nu=0}^{\frac J2-1} \frac{\left(-j^2Z\right)^\nu}{\nu!}} \le \frac{\left(j^2|Z|\right)^\frac J2}{\left(\frac J2\right)!} \le \frac{2^\frac J4j^Jx^\frac J2}{\left(\frac J2\right)!}.
	\end{equation}
	We apply \eqref{eq:DeltaE_Taylor_error} to \eqref{eq:DeltaE_expand} and get 
	\[
		\vb{e^{r^2Z}\De^J(E_Z(r))-\sum_{j=0}^J (-1)^j\binom Jje^{-2rjZ}\sum_{\nu=0}^{\frac J2-1} \frac{(-j^2Z)^\nu}{\nu!}} \le \frac{2^\frac J4x^\frac J2}{\left(\frac J2\right)!}\sum_{j=0}^J \binom Jjj^J.
	\]
	We deduce that
	\begin{align}\nonumber
		\left|e^{r^2Z}\De^J(E_Z(r))\right| &\le \left|\sum_{\nu=0}^{\frac J2-1} \frac{(-Z)^\nu}{\nu!}\sum_{j=0}^J (-1)^j\binom Jjj^{2\nu}e^{-2rjZ}\right| + \frac{2^\frac J4x^\frac J2}{\left(\frac J2\right)!}\sum_{j=0}^J \binom Jj j^J\\
		\label{eq:DeltaE_Taylor}
		&\le \sum_{\nu=0}^{\frac J2-1} \frac{2^\frac\nu2x^\nu}{\nu!}\left|\left[\left(\pd{}{w}\right)^{2\nu} \left(1-e^{-w}\right)^J\right]_{w =2rZ}\right| + \frac{2^\frac J4x^\frac J2}{\left(\frac J2\right)!}\sum_{j=0}^J \binom Jjj^J.
	\end{align}
	For $0\le\nu<\frac J2$, choose $C_{2b}(J,\nu)$ such that for all $w\in\C$ with $\re(w)\ge0$ and $|\im(w)|\le\re(w)$
	\begin{equation}\label{eq:Cnew03_def}
		\vb{w^{2\nu-J}\left(\frac{\del}{\del w}\right)^{2\nu} \left(1-e^{-w}\right)^J} \le C_{2b}(J,\nu).
	\end{equation}

	Plugging \eqref{eq:Cnew03_def} into \eqref{eq:DeltaE_Taylor} gives, using that $|Z|\le\sqrt{2}x$,
	\begin{align}\nonumber
		\vb{e^{r^2Z}\De^J(E_Z(r))} &\le \sum_{\nu=0}^{\frac J2-1} C_{2b}(J,\nu)\frac{2^\frac\nu2x^\nu}{\nu!}|2rZ|^{J-2\nu} + \frac{2^\frac J4x^\frac J2}{\left(\frac J2\right)!}\sum_{j=0}^J \binom Jjj^J\\
		\nonumber
		&\le \sum_{\nu=0}^{\frac J2-1} C_{2b}(J,\nu)\frac{2^{\frac{3J}{2}-\frac{5\nu}{2}}}{\nu!}r^{J-2\nu}x^{J-\nu} + \frac{2^\frac J4x^\frac J2}{\left(\frac J2\right)!}\sum_{j=0}^J \binom Jjj^J\\
		\label{eq:DeltaE_bound2}
		&=: \sum_{\nu=0}^{\frac J2-1} C_{2c}(J,\nu)r^{J-2\nu}x^{J-\nu} + C_{2d}(J)x^\frac J2.
	\end{align}
	Now we estimate $R_K(2\ell;bZ,Z)$. From \eqref{eq:LZ2.3_RK}, we have the trivial bound
	\begin{equation}\label{eq:RK_triv}
		|R_K(2\ell;bZ,Z)| \le \frac{1}{(K-1)!} \int_0^1 \vb{\left[\left(\pd{}{w}\right)^{2\ell+2K} \sum_{r=1}^\infty (-1)^re^{-r^2Zt-rw}\right]_{w=bZ}} dt.
	\end{equation}
	Applying \Cref{lem:LZ2.4} with $z=-e^w$ and $h(r)=E_Z(r)$, we get
	\[
		\sum_{r=1}^\infty (-1)^re^{-r^2Z-rw} = -1 + \sum_{j=0}^{J-1} \frac{(-1)^je^{-jw}\De^j(E_Z(0))}{\left(1+e^{-w}\right)^{j+1}} + \frac{e^{-Jw}}{\left(1+e^{-w}\right)^J}\sum_{r=1}^\infty (-1)^re^{-rw}\De^J(E_Z(r)).
	\]
	Plugging this into \eqref{eq:RK_triv} with $Z\mapsto Zt$ and multiplying both sides by $e^{bZ}$, we get a bound
	\begin{multline}\label{eq:RK_2pieces}
		\vb{e^{bZ}R_K(2\ell;bZ,Z)} \le \frac{1}{(K-1)!}\left(\sum_{j=0}^{J-1} \int_0^1 \vb{e^{bZ}\left[\left(\pd{}{w}\right)^{2\ell+2K}\frac{(-1)^je^{-jw}\De^jE_{Zt}(0)}{\left(1+e^{-w}\right)^{j+1}}\right]_{w=bZ}} dt\right.\\
		\left.{\vphantom{\sum_{j=0}^{J-1}}}+ \int_0^1 \vb{e^{bZ}\left[\left(\pd{}{w}\right)^{2\ell+2K} \left(\frac{e^{-Jw}}{\left(1+e^{-w}\right)^J}\sum_{r=1}^\infty (-1)^re^{-rw}\De^J(E_{Zt}(r))\right)\right]_{w=bZ}} dt\right).
	\end{multline}

	We now consider the first integral in \eqref{eq:RK_2pieces}. For $k\in\N$ and $j\in\N_0$, let $C_{2e}(k,j)$ and $C_{2f}(j,x_0)$ be constants such that for all $w\in\C$ with $\re(w)\ge0$, $|\im(w)|\le\re(w)$, and $Z=x+iy$ with $0\le x\le x_0$, $|y|\le x$, we have
	\begin{equation}\label{eq:C67}
		\vb{e^w\left(\pd{}{w}\right)^k \frac{(-1)^je^{-jw}}{\left(1+e^{-w}\right)^{j+1}}} \le C_{2e}(k,j),\qquad \vb{\De^j(E_Z(0))} \le C_{2f}(j,x_0).
	\end{equation}
	Then we have, for $Z = x+iy$ with $0\le x \le x_0$ and $\vb{y}\le x$, 
	\begin{equation}\label{eq:RKp1_bound}
		\sum_{j=0}^{J-1} \int_0^1 \vb{e^{bZ}\left[\left(\tfrac{\partial}{\partial w}\right)^{2\ell+2K} \tfrac{(-1)^je^{-jw}}{\left(1+e^{-w}\right)^{j+1}}\right]_{w=bZ}}\De^j(E_{Zt}(0)) dt
		 \le \sum_{j=0}^{J-1} C_{2e}(2\ell+2K,j)C_{2f}(j,x_0).
	\end{equation}
	For the second term in \eqref{eq:RK_2pieces}, we use the product rule, and bound it by
	\begin{equation}\label{eq:RK_PR}
		\sum_{j=0}^{2\ell+2K} \binom{2\ell+2K}{j}\vb{e^{bZ}\left[\left(\tfrac{\partial}{\partial w}\right)^{2\ell+2K-j} \tfrac{e^{-Jw}}{\left(1+e^{-w}\right)^J}\right]_{w=bZ}}
		\int_0^1 \left|\sum_{r=1}^\infty (-1)^rr^je^{-rbZ}\De^J(E_{Zt}(r))\right| dt. 
	\end{equation}
	Now we estimate the inner sum in \eqref{eq:RK_PR}. Clearly, we have (as $\re(Z)=x\ge0$)
	\begin{equation}\label{eq:RKp2_inner}
		\Bigg|\sum_{r=1}^\infty (-1)^r r^j e^{-rbZ} \Delta^J (E_{Zt}(r))\Bigg| \le \sum_{r=1}^\infty r^j \vb{\Delta^J (E_{Zt}(r))}.
	\end{equation}
	Let $\vr>\frac12$ (chosen below). We split the sum in \eqref{eq:RKp2_inner} into two parts and write
	\begin{equation}\label{eq:RKp2_decomp}
		\sum_{r=1}^\infty r^j \vb{\Delta^J (E_{Zt}(r))} = \sum_{r=0}^{\pflo{(xt)^{-\varrho}}} r^j \vb{\Delta^J (E_{Zt}(r))} + \sum_{r=\pflo{(xt)^{-\varrho}}+1}^\infty r^j \vb{\Delta^J (E_{Zt}(r))}.
	\end{equation}
	For the first sum in \eqref{eq:RKp2_decomp}, we use \eqref{eq:DeltaE_bound2} to bound it against
	\begin{equation}\label{eq:RKp2_rsmall}
		\sum_{\nu=0}^{\frac J2-1} C_{2c}(J,\nu)(xt)^{J-\nu-\vr(J+j-2\nu+1)}+C_{2d}(J)\left((xt)^{\frac J2-\vr(j+1)}+\d_{j=0}(xt)^\frac J2\right).
	\end{equation}
	We choose $\vr$ such that all the exponents of $xt$ in \eqref{eq:RKp2_rsmall} are non-negative. If $J$ is sufficiently large (say $J\ge2\ell+2K+4$), then we can make this choice. Then \eqref{eq:RKp2_rsmall} is bounded by a constant independent of $x$ and $t$. For the second sum in \eqref{eq:RKp2_decomp}, we use the bound in \eqref{eq:Cnew02_def2} and obtain
	\begin{equation}\label{eq:RKp2_rlargeC2}
		\sum_{r=\pflo{(xt)^{-\varrho}}+1}^\infty r^j \vb{\Delta^J (E_{Zt}(r))} \le C_{2a}(J,x_0) \sum_{r=\pflo{(xt)^{-\varrho}}+1}^\infty r^j e^{-r^2xt}.
	\end{equation}
	Now assume that $(xt)^{-1}\ge(\frac j2)^{\frac12(\vr-\frac12)^{-1}}$. Note that $u^je^{-u^2xt}$ is decreasing for $u>(xt)^{-\vr}$. Thus use the integral comparison criterion, to obtain
	\begin{equation}\label{eq:Cnew08_def}
		\sum_{r=\pflo{(xt)^{-\vr}}+1}^\infty r^je^{-r^2xt} \le \int_{(xt)^{-\vr}}^\infty u^je^{-xtu^2} du \le C_{2g}(j,\vr),
	\end{equation}
	for a constant $C_{2g}(j,\vr)$ independent of $xt$. Plugging \eqref{eq:Cnew08_def} into \eqref{eq:RKp2_rlargeC2} yields
	\begin{equation}\label{eq:RKp2_rlarge}
		\sum_{r=\pflo{(xt)^{-\varrho}}+1}^\infty r^j \vb{\Delta^J (E_{Zt}(r))} \le C_{2a}(J,x_0) C_{2g}(j,\varrho)
	\end{equation}
	for $x_0\le(\frac j2)^{-\frac12(\vr-\frac12)^{-1}}$. Inserting \eqref{eq:RKp2_rsmall} and \eqref{eq:RKp2_rlarge} into \eqref{eq:RKp2_decomp}, we get
	\begin{multline*}
		\sum_{r=0}^\infty r^j \vb{\Delta^J (E_{Zt}(r))} \le \sum_{\nu=0}^{\frac J2-1} C_{2c}(J,\nu ) (x_0t)^{J-\nu  - \varrho(J+j-2\nu +1)}\\
		+ C_{2d}(J) \rb{(x_0t)^{\frac J2 - \varrho(j+1)} + \delta_{j=0} (x_0t)^{\frac J2}} + C_{2a}(J,x_0) C_{2g}(j,\varrho).
	\end{multline*}
	Integrating over $t$, we obtain
	\begin{equation}\label{eq:Cnew09_def}
		\int_0^1 \sum_{r=0}^\infty r^j \vb{\Delta^J ( E_{Zt}(r))} dt \le C_{2h}(J,j,x_0,\varrho),
	\end{equation}
	for $x_0\le(\frac j2)^{-\frac12(\vr-\frac12)^{-1}}$, where
	\begin{multline*}
		C_{2h}(J,j,x_0,\varrho) := \sum_{\nu=0}^{\frac J2-1} \frac{C_{2c}(J,\nu)}{J-\nu+1-\varrho(J+j-2\nu+1)} x_0^{J-\nu-\varrho(J+j-2\nu+1)}\\
	+ \frac{C_{2d}(J)}{\frac J2 + 1-\varrho(j+1)} x_0^{\frac J2 - \varrho(j+1)} +\frac{\delta_{j=0} C_{2d}(J)}{\frac J2 + 1} x_0^{\frac J2} + C_{2a}(J,x_0) C_{2g}(j,\varrho).
	\end{multline*}

	For $k\in\N_0$, let $C_{2i}(J,k)$ be constants such that for $w\in\C$ with $\re(w)\ge0$ and $|\im(w)|\le\re(w)$
	\begin{equation}\label{eq:Cnew10_def}
		\vb{e^w\left(\frac{\del}{\del w}\right)^k {\frac{e^{-Jw}}{\left(1+e^{-w}\right)^J}}} \le C_{2i}(J,k).
	\end{equation}
	Using \eqref{eq:RK_PR}, \eqref{eq:Cnew09_def}, and \eqref{eq:Cnew10_def}, the second integral in \eqref{eq:RK_2pieces} is bounded by 
	\begin{equation}\label{eq:RKp2_bound}
		\sum_{j=0}^{2\ell+2K} \binom{2\ell+2K}j C_{2i}(J, 2\ell+2K-j) C_{2h}(J,j,x_0, \varrho)
	\end{equation}
	for $x_0\le(\ell+K)^{-\frac12(\vr-\frac12)^{-1}}$ (this assumption is needed for \eqref{eq:Cnew09_def} to hold). Combining the error estimates \eqref{eq:RKp1_bound} and \eqref{eq:RKp2_bound}, and plugging into \eqref{eq:RK_2pieces}, we obtain that
	\begin{multline*}
		\vb{e^{bZ} R_K(2\ell;bZ,Z)} \le \frac{1}{(K-1)!} \left(\sum_{j=0}^{J-1} C_{2e}(2\ell+2K,j) C_{2f}(j,x_0)\right.\\
		+ \left.\sum_{j=0}^{2\ell+2K} \binom{2\ell+2K}{j} C_{2i}(J,2\ell+2K-j) C_{2h}(J,j,x_0,\varrho)\right),
	\end{multline*}
	which holds for $Z=x+iy$, $0\le x\le x_0\le(\ell+K)^{-\frac12(\vr-\frac12)^{-1}}$, $|y|\le x$. Plugging back into \eqref{eq:RK_applied}, we obtain, for $\b_n\le x_0\le\frac23(\ell+K)^{-\frac12(\vr-\frac12)^{-1}}$,
	\begin{multline}\label{eq:bound_CCCC}
		\vb{\sum_{r=1}^\infty (-1)^{r+1}r^{2\ell}e^{-\left(\frac{3r^2}{2}+\left(m+\frac12\right)r\right)Z} - \sum_{k=0}^{K-1} \frac{\left(-\frac{3Z}{2}\right)^k}{k!}\left[\left(\frac{\del}{\del w}\right)^{2\ell+2k} {\frac{1}{1+e^w}}\right]_{w=\left(m+\frac12\right)Z}}\\
		\le \frac{3^K\b_n^Ke^{-\left(m+\frac12\right)\b_n}}{2^\frac K2(K-1)!}\left(\sum_{j=0}^{J-1} C_{2e}(2\ell+2K,j)C_{2f}\left(j,\frac{3x_0}{2}\right)\right.\\
		+ \left.\sum_{j=0}^{2\ell+2K} \binom{2\ell+2K}{j}C_{2i}(J,2\ell+2K-j)C_{2h}\rb{J,j,\frac{3x_0}{2},\vr}\right).
	\end{multline}
	Multiplying both sides of \eqref{eq:bound_CCCC} by $\frac{2Z^{2\ell}}{(2\ell)!}$ we obtain, again using that $|Z|\le\sqrt{2}x$
	\begin{align*}
		&\vb{\Mc_{\ell,m}(Z)-\frac{2}{(2\ell)!}\sum_{k=0}^{K-1} \frac{\left(-\frac32\right)^kZ^{2\ell+k}}{k!}\left[\left(\pd{}{w}\right)^{2\ell+2k} \frac{1}{1+e^w}\right]_{w=\left(m+\frac12\right)Z}}\\
		&\hspace{1cm}\le \frac{2^{\ell-\frac K2+1}3^K}{(2\ell)!(K-1)!}\b_n^{2\ell+K} e^{-\left(m+\frac12\right)\b_n}\left(\sum_{j=0}^{J-1} C_{2e}(2\ell+2K,j)C_{2f}\left(j,\frac{3x_0}{2}\right)\right.\\
		&\hspace{6cm}\left.+ \sum_{j=0}^{2\ell+2K} \binom{2\ell+2K}{j}C_{2i}(J,2\ell+2K-j) C_{2h}\left(J,j,\frac{3x_0}{2},\vr\right)\right)\\
		&\hspace{1cm}=: C_2(\ell,K,J,x_0,\vr)\b_n^{2\ell+K}e^{-\left(m+\frac12\right)\b_n}. \qedhere
	\end{align*}
\end{proof}

\subsection{Back to the Circle Method}
{We now combine the above results and use the Circle Method.}
By \eqref{eq:Cauchy_int}, we have a splitting 
\[
	N(m,n) - N(m+1,n) = \mf M + \mf m,
\]
where $\mf M$ and $\mf m$ are given in \eqref{eq:maj_arc_int} and \eqref{eq:min_arc_int}, respectively. From \Cref{prp:min_arc_bound} we know that
\[
	\vb{\mf m} \le 13 e^{\frac {3\mc C_n}2}.
\]
Meanwhile, by \eqref{eq:M1_def}, we have a splitting
\[
	\mf M = \mc M^{[1]} + \mc E^{[1]},
\]
with {\hlb (see \eqref{eq:E1_display})}
\[
	\vb{\mc E^{[1]}} \le e^{\mc C_n}. 
\]

A straightforward calculation then shows the following.

\begin{lemma}\label{lem:magic32}
	Let $n\in\N$. Define $\b_n$ and $\Cc_n$ as in \eqref{eq:bL_def} and let $Z=\b_n+iy$. For $\nu\in\N_0$, we have
	\[
		\int_{\vb{y}\le \beta_n} \vb{y}^\nu  \vb{e^{\left(n-\frac{1}{24}\right)Z+\frac{\pi^2}{6Z}}} dy \le \frac{12^{\frac{\nu +1}{2}}}{\pi^{\nu +1}} \Gamma\rb{\frac{\nu +1}{2}} \beta_n^{\frac 32 (\nu +1)} e^{2\mathcal{C}_n}.
	\]
\end{lemma}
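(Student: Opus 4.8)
The plan is to make the real part of the exponent explicit, reduce the two–sided integral to a one–dimensional Gaussian moment integral by the rescaling $y=\b_n t$, and upgrade the resulting Lorentzian profile $\exp(\Cc_n/(1+t^2))$ to an honest Gaussian so that the $|y|^\nu$ weight produces the sharp power of $\b_n$.

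First I would write $Z=\b_n+iy$ and note that $\re(Z)=\b_n$ and $\re(Z^{-1})=\b_n/(\b_n^2+y^2)$, so that
\[
	\left|e^{\left(n-\frac1{24}\right)Z+\frac{\pi^2}{6Z}}\right|=e^{\left(n-\frac1{24}\right)\b_n}\exp\!\left(\frac{\pi^2\b_n}{6(\b_n^2+y^2)}\right).
\]
Using the identity $\b_n^2=\frac{\pi^2}{6\left(n-\frac1{24}\right)}$, equivalently $\frac{\pi^2}{6\b_n}=\Cc_n$, the first factor is $e^{\Cc_n}$ and the exponent of the second factor simplifies to $\Cc_n/\bigl(1+(y/\b_n)^2\bigr)$. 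Substituting $y=\b_n t$, so that $|y|\le\b_n\iff|t|\le1$, $dy=\b_n\,dt$, and $|y|^\nu=\b_n^\nu|t|^\nu$, transforms the integral into
\[
	\int_{|y|\le\b_n}|y|^\nu\left|e^{\left(n-\frac1{24}\right)Z+\frac{\pi^2}{6Z}}\right|\,dy=\b_n^{\nu+1}e^{\Cc_n}\int_{-1}^1|t|^\nu\exp\!\left(\frac{\Cc_n}{1+t^2}\right)dt.
\]

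The one genuinely necessary idea is the elementary inequality $\frac1{1+t^2}\le1-\frac{t^2}2$ valid for $|t|\le1$ (after clearing denominators it reads $t^2(1-t^2)\ge0$). This is what forces the quadratic decay of $\re(Z^{-1})$ away from the peak to be retained: replacing $\re(Z^{-1})$ by its value at $y=0$ would only give the weaker power $\b_n^{\nu+1}$ in place of $\b_n^{\frac32(\nu+1)}$. With this inequality, $\exp(\Cc_n/(1+t^2))\le e^{\Cc_n}e^{-\Cc_n t^2/2}$, so extending the integral to all of $\R$,
\[
	\int_{-1}^1|t|^\nu\exp\!\left(\frac{\Cc_n}{1+t^2}\right)dt\le e^{\Cc_n}\int_{\R}|t|^\nu e^{-\frac{\Cc_n}2t^2}\,dt=e^{\Cc_n}\,\Gamma\!\left(\tfrac{\nu+1}2\right)\left(\tfrac{\Cc_n}2\right)^{-\frac{\nu+1}2},
\]
where the last equality is the standard Gaussian moment integral (substitute $u=\frac{\Cc_n}2t^2$, cf.\ \cite[3.326.2]{GR2007}). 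Finally, using $\Cc_n=\frac{\pi^2}{6\b_n}$ once more I would rewrite $\left(\frac{\Cc_n}2\right)^{-\frac{\nu+1}2}=\left(\frac{12\b_n}{\pi^2}\right)^{\frac{\nu+1}2}=\frac{12^{(\nu+1)/2}}{\pi^{\nu+1}}\b_n^{(\nu+1)/2}$; collecting the factors $\b_n^{\nu+1}\cdot e^{\Cc_n}\cdot e^{\Cc_n}\cdot\Gamma\!\left(\frac{\nu+1}2\right)\cdot\frac{12^{(\nu+1)/2}}{\pi^{\nu+1}}\b_n^{(\nu+1)/2}$ gives precisely the claimed bound. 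The main obstacle is conceptual rather than computational: one must notice that the Lorentzian bump has to be dominated by a Gaussian in order to extract the correct exponent $\frac32(\nu+1)$; once that is in place everything is a direct calculation requiring only $n\in\N$ (so that $\b_n$ is defined).
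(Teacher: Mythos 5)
Your proof is correct, and since the paper itself omits the argument (calling it only a ``straightforward calculation''), your write-up fills in exactly the standard steps one would expect: computing $\re(Z)=\b_n$ and $\re(Z^{-1})=\b_n/(\b_n^2+y^2)$, rescaling $y=\b_n t$, dominating the Lorentzian peak $e^{\Cc_n/(1+t^2)}$ by the Gaussian $e^{\Cc_n}e^{-\Cc_n t^2/2}$ via the inequality $(1+t^2)^{-1}\le 1-\tfrac{t^2}{2}$ for $|t|\le1$, and evaluating the resulting Gaussian moment. Each step is verified, the constants assemble to the stated bound, and the Gaussian-majorization step is indeed the key input producing the exponent $\tfrac32(\nu+1)$ rather than the weaker $\nu+1$.
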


Next we give a decomposition of $\Mc^{[1]}$ which follows directly from \eqref{eq:M1_def} and \Cref{lem:ELmz_bound,lem:magic32}.

\begin{lemma}\label{lem:M1_split}
For $L\ge 2$, we have a decomposition
\[
	\mc M^{[1]} = \sum_{\ell = 1}^{L-1} \mc M_\ell^{[2]} + \mc E_L^{[2]},
\]
where
\begin{align}\label{eq:M2_def}
	\mc M_\ell^{[2]} &:= \;\frac{1}{(2\pi)^{\frac 32}} \int_{\vb{y}\le \beta_n} \sqrt{Z} e^{\left(n-\frac{1}{24}\right)Z +\frac{\pi^2}{6Z}} \mc M_{\ell,m}(Z) dy,\\
	\mc E_L^{[2]} &:= \;\frac{1}{(2\pi)^{\frac 32}} \int_{\vb{y}\le \beta_n} \sqrt{Z} e^{\left(n-\frac{1}{24}\right)Z +\frac{\pi^2}{6Z}} \mc E_{L,m}(Z) dy. \label{eq:E2_def}
\end{align}
Moreover, the error term $\mc E_L^{[2]}$ satisfies,  for $\beta_n \leq x_0$
\begin{equation}
	\vb{\mc E_L^{[2]}} \le \frac{\sqrt{3}}{2^\frac14\pi^2}C_1(L,x_0)\b_n^{L+\frac32}e^{2\mc C_n-\left(m+\frac12\right)\b_n}
	=: \mc D^{[2]}_L(x_0) \beta_n^{L+\frac 32} e^{2\mathcal{C}_n-\left(m+\frac 12\right)\beta_n}.\label{eq:E2_display}
\end{equation}

\end{lemma}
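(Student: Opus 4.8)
The plan is to obtain the decomposition formally from the definition of $\Ec_{L,m}$ in \eqref{eq:EHm_def}, and then to estimate the resulting remainder integral by feeding the pointwise bound of \Cref{lem:ELmz_bound} into the integral bound of \Cref{lem:magic32}. There is no analytic difficulty; the work is entirely in assembling previously established estimates and tracking constants.

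\textbf{Step 1: the decomposition.} Rearranging \eqref{eq:EHm_def} gives $H_m(e^{-Z}) = \sum_{\ell=1}^{L-1}\Mc_{\ell,m}(Z) + \Ec_{L,m}(Z)$. Substituting this into the formula \eqref{eq:M1_def} for $\Mc^{[1]}$ and writing $\sqrt{Z/(2\pi)} = (2\pi)^{-1/2}\sqrt{Z}$, the integrand of $\Mc^{[1]}$ splits into a sum of $L-1$ terms plus a remainder term. Interchanging the finite sum with the integral, the $\ell$-th summand is exactly $\Mc_\ell^{[2]}$ as defined in \eqref{eq:M2_def}, and the remainder integral is exactly $\Ec_L^{[2]}$ as in \eqref{eq:E2_def}. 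This proves $\Mc^{[1]} = \sum_{\ell=1}^{L-1}\Mc_\ell^{[2]} + \Ec_L^{[2]}$ with no estimates needed; the hypothesis $L\ge 2$ only ensures the sum is nonempty (for $L=1$ one would simply read $\Mc^{[1]}=\Ec_1^{[2]}$).

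\textbf{Step 2: the error bound.} On the major arc $Z=\beta_n+iy$ with $|y|\le\beta_n$, so $|Z|\le\sqrt{2}\,\beta_n$ and hence $|\sqrt{Z}|\le 2^{1/4}\beta_n^{1/2}$. By \Cref{lem:ELmz_bound}, for $\beta_n\le x_0$ we have the pointwise bound $|\Ec_{L,m}(Z)|\le C_1(L,x_0)\,\beta_n^{L-1/2}e^{-(m+1/2)\beta_n}$, uniform in $y$. Pulling both of these factors out of the integral \eqref{eq:E2_def} leaves $\int_{|y|\le\beta_n}\bigl|e^{(n-1/24)Z+\pi^2/(6Z)}\bigr|\,dy$, which by \Cref{lem:magic32} with $\nu=0$ (using $\Gamma(\tfrac12)=\sqrt{\pi}$ and $12^{1/2}=2\sqrt{3}$) is at most $\tfrac{2\sqrt{3}}{\sqrt{\pi}}\,\beta_n^{3/2}e^{2\Cc_n}$. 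Multiplying the three estimates, the powers of $\beta_n$ combine as $\beta_n^{1/2}\cdot\beta_n^{L-1/2}\cdot\beta_n^{3/2}=\beta_n^{L+3/2}$, and the numerical prefactor simplifies to $\tfrac{1}{(2\pi)^{3/2}}\cdot 2^{1/4}\cdot\tfrac{2\sqrt{3}}{\sqrt{\pi}}=\tfrac{\sqrt{3}}{2^{1/4}\pi^2}$, which is exactly the constant appearing in \eqref{eq:E2_display}. This yields the claimed bound $|\Ec_L^{[2]}|\le \tfrac{\sqrt3}{2^{1/4}\pi^2}C_1(L,x_0)\,\beta_n^{L+3/2}e^{2\Cc_n-(m+1/2)\beta_n}$.

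\textbf{Main obstacle.} There is no genuine obstacle: this is a bookkeeping lemma that glues \Cref{lem:ELmz_bound} and \Cref{lem:magic32} together. The only place where care is required is in the constant and the power of $\beta_n$ — in particular correctly accounting for the $\sqrt{Z}$ factor (which contributes $2^{1/4}\beta_n^{1/2}$) and applying \Cref{lem:magic32} with the right index $\nu=0$.
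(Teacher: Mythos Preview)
Your proof is correct and follows exactly the approach indicated in the paper, which simply states that the lemma follows directly from \eqref{eq:M1_def} together with \Cref{lem:ELmz_bound} and \Cref{lem:magic32}. Your careful tracking of the constant $\tfrac{1}{(2\pi)^{3/2}}\cdot 2^{1/4}\cdot\tfrac{2\sqrt{3}}{\sqrt{\pi}}=\tfrac{\sqrt{3}}{2^{1/4}\pi^2}$ and of the exponent $L+\tfrac32$ of $\beta_n$ matches the statement precisely.
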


From \Cref{prop:Cnew11_def} and \Cref{lem:magic32} we obtain {the following lemma}. 

\begin{lemma}\label{lem:M2_split}
For $\ell,K\in\N$, we have a decomposition
\ba
	\mc M_\ell^{[2]} = \sum_{k=0}^{K-1} \mc M_{\ell,k}^{[3]} + \mc E_{\ell,K}^{[3]},
\ea
where
\begin{equation*}
	\mc M_{\ell,k}^{[3]} := \frac{2 \left(-\frac 32\right)^k}{(2\pi)^{\frac 32}(2\ell)! k!} \int_{\vb{y}\le \beta_n} Z^{2\ell+k+\frac 12} e^{\rb{n-\frac 1{24}}Z+\frac{\pi^2}{6Z}} \sqb{\rb{\frac{\partial}{\partial w }}^{2\ell+2k} {\frac{1}{1+e^{w}}}}_{w = \left(m+\frac 12\right)Z} dy.
\end{equation*}
The error term $\mc E_{\ell,K}^{[3]}$ satisfies, for $\b_n\le x_0\le\frac23(\ell+K)^{-\frac12(\vr-\frac12)^{-1}}$
{\begin{equation*}
\scalebox{0.95}{$\displaystyle\vb{\mc E_{\ell,K}^{[3]}} \le \frac{\sqrt{3}}{2^\frac14\pi^2}C_2(\ell,K,J,x_0,\vr) \b_n^{2\ell+K+2}e^{2\mc C_n-\left(m+\frac12\right)\b_n} =: \mc D^{[3]}_{\ell,K}(J,x_0,\varrho) \beta_n^{2\ell+K+2} e^{2\Cc_n - \left(m+\frac 12\right)\beta_n}.$} 
\end{equation*}}
\end{lemma}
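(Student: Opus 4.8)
The plan is to feed the pointwise approximation of $\mc M_{\ell,m}(Z)$ furnished by Proposition~\ref{prop:Cnew11_def} into the integral \eqref{eq:M2_def} defining $\mc M_\ell^{[2]}$, read off the main terms as the $\mc M_{\ell,k}^{[3]}$, and then estimate the remaining integral using Lemma~\ref{lem:magic32}.

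First I would record that on the major arc $Z=\b_n+iy$ with $\vb{y}\le\b_n$, so $\vb{Z}\le\sqrt2\,\b_n$, and that the hypothesis $\b_n\le x_0\le\frac23(\ell+K)^{-\frac12(\vr-\frac12)^{-1}}$ is exactly the condition under which Proposition~\ref{prop:Cnew11_def} applies; in particular its remainder estimate is valid uniformly for all $Z$ on the major arc. Writing $\mc M_{\ell,m}(Z)$ as its degree-$(K-1)$ approximation from Proposition~\ref{prop:Cnew11_def} plus a remainder $\Rc_{\ell,K,m}(Z)$ with $\vb{\Rc_{\ell,K,m}(Z)}\le C_2(\ell,K,J,x_0,\vr)\,\b_n^{2\ell+K}e^{-(m+\frac12)\b_n}$, and substituting into \eqref{eq:M2_def}, linearity of the integral immediately produces the claimed splitting $\mc M_\ell^{[2]}=\sum_{k=0}^{K-1}\mc M_{\ell,k}^{[3]}+\mc E_{\ell,K}^{[3]}$, where
\[
	\mc E_{\ell,K}^{[3]}=\frac{1}{(2\pi)^{\frac32}}\int_{\vb{y}\le\b_n}\sqrt Z\,e^{\left(n-\frac1{24}\right)Z+\frac{\pi^2}{6Z}}\,\Rc_{\ell,K,m}(Z)\,dy.
\]
Here one uses that $\sqrt Z\cdot\frac{2}{(2\ell)!}\cdot\frac{\left(-\frac32\right)^kZ^{2\ell+k}}{k!}=\frac{2\left(-\frac32\right)^k}{(2\ell)!\,k!}Z^{2\ell+k+\frac12}$, which is precisely the integrand appearing in the definition of $\mc M_{\ell,k}^{[3]}$.

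It remains to bound $\mc E_{\ell,K}^{[3]}$. I would estimate the integrand pointwise by $\vb{\sqrt Z}\le 2^{1/4}\b_n^{1/2}$ together with the uniform bound on $\vb{\Rc_{\ell,K,m}(Z)}$, pull out the resulting constant, and apply Lemma~\ref{lem:magic32} with $\nu=0$, i.e. $\int_{\vb{y}\le\b_n}\vb{e^{\left(n-\frac1{24}\right)Z+\frac{\pi^2}{6Z}}}\,dy\le\frac{2\sqrt3}{\sqrt\pi}\,\b_n^{3/2}e^{2\mc C_n}$ (using $12^{1/2}=2\sqrt3$ and $\Gamma(\tfrac12)=\sqrt\pi$). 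Collecting powers of $\b_n$ gives $\b_n^{1/2}\cdot\b_n^{2\ell+K}\cdot\b_n^{3/2}=\b_n^{2\ell+K+2}$, while the numerical constant is $\frac{2^{1/4}}{(2\pi)^{3/2}}\cdot\frac{2\sqrt3}{\sqrt\pi}=\frac{\sqrt3}{2^{1/4}\pi^2}$, yielding exactly the asserted estimate with $\mc D^{[3]}_{\ell,K}(J,x_0,\vr):=\frac{\sqrt3}{2^{1/4}\pi^2}C_2(\ell,K,J,x_0,\vr)$.

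I do not expect a real obstacle: the argument is essentially bookkeeping, with Proposition~\ref{prop:Cnew11_def} supplying the pointwise approximation of $\mc M_{\ell,m}(Z)$ and Lemma~\ref{lem:magic32} supplying the integral bound over the major arc. The only point deserving care is to check that the constraint on $x_0$ in the statement of this lemma matches \emph{verbatim} the one under which Proposition~\ref{prop:Cnew11_def} was proved, so that its remainder bound is legitimately valid for every $Z$ on the major arc and may therefore be taken inside the integral.
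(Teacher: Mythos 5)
Your proposal is correct and matches the paper's approach exactly: the paper simply states that Lemma~\ref{lem:M2_split} follows from Proposition~\ref{prop:Cnew11_def} and Lemma~\ref{lem:magic32}, and you have supplied the straightforward bookkeeping (insert the pointwise approximation of $\mc M_{\ell,m}(Z)$ into \eqref{eq:M2_def}, bound $\vb{\sqrt Z}\le 2^{1/4}\b_n^{1/2}$, apply Lemma~\ref{lem:magic32} with $\nu=0$, and collect constants to get $\frac{2^{1/4}}{(2\pi)^{3/2}}\cdot\frac{2\sqrt3}{\sqrt\pi}=\frac{\sqrt3}{2^{1/4}\pi^2}$). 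Your remark about verifying that the $x_0$-constraint in the lemma is identical to the one under which Proposition~\ref{prop:Cnew11_def}'s remainder bound was established is exactly the right thing to check.
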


Next we give a decomposition of $\mc M_{\ell,k}^{[3]}$.

\begin{lemma}\label{lem:M3_split}
For $\ell,R\in\N$, $k\in\N_0$, we have a decomposition
\[
	\mc M_{\ell,k}^{[3]} = \sum_{r=0}^{R-1} \mc M_{\ell,k,r}^{[4]} + \mc E_{\ell,k,R}^{[4]},
\]
where
\begin{equation}\label{eq:cMlkv2_def}
	\Mc_{\ell,k,r}^{[4]} \hspace{-1mm}:=\hspace{-1mm} \tfrac{2\left(-\frac32\right)^k\left(m+\frac12\right)^r}{\sqrt{2\pi}(2\ell)!k!r!}\hspace{-1mm} \left[\left(\tfrac{\partial}{\partial w}\right)^{2\ell+2k+r}\hspace{-1mm} \tfrac{1}{1+e^w}\right]_{w=\left(m+\frac12\right)\b_n}\hspace{-1mm}
	 \tfrac{1}{2\pi}\int_{|y|\le\b_n}\hspace{-2mm} Z^{2\ell+k+\frac12}e^{\left(n-\frac{1}{24}\right)Z+\frac{\pi^2}{6Z}}(iy)^r dy.
\end{equation}
The error term $\mc E_{\ell,k,R}^{[4]}$ satisfies
\begin{align}\nonumber
	\vb{\mc E_{\ell,k,R}^{[4]}} &\le \frac{2^{\ell-\frac k2+R+\frac34}3^{k+\frac{R+1}{2}} \Ga\left(\frac{R+1}{2}\right)\left(m+\frac12\right)^R}{\pi^{R+\frac52}(2\ell)!k!(R-1)!} C_{2e}(2\ell+2k+R,0)\b_n^{2\ell+k+\frac{3R}{2}+2}e^{2\Cc_n-\left(m+\frac12\right)\b_n}.\\
	&=: \mc D_{\ell,k,R}^{[4]} {\textstyle \left(m+\frac 12\right)^R} \beta_n^{2\ell+k+\frac{3R}{2}+2} e^{2\mathcal{C}_n - \left(m+\frac 12\right)\beta_n}.\label{eq:E4_display}
\end{align}
\end{lemma}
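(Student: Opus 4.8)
The plan is to read this decomposition as a Taylor expansion, in the variable $Z$ along the vertical segment $Z=\b_n+iy$ of the major arc, of the slowly varying factor
\[
	g(Z) := \sqb{\rb{\pd{}{w}}^{2\ell+2k} \frac{1}{1+e^{w}}}_{w=\left(m+\frac12\right)Z}
\]
occurring in the integrand of $\Mc_{\ell,k}^{[3]}$, expanded about the centre $Z=\b_n$, so that the expansion variable is $iy=Z-\b_n$. Since $\frac1{1+e^{w}}$ is holomorphic on $\{\re(w)>0\}$, the chain rule gives $g^{(r)}(Z)=(m+\frac12)^{r}\sqb{(\pd{}{w})^{2\ell+2k+r}(1+e^{w})^{-1}}_{w=(m+\frac12)Z}$ for all $r\in\N_0$. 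First I would apply Taylor's theorem with integral remainder to the function $t\mapsto g(\b_n+tiy)$ on $[0,1]$, evaluated at $t=1$, to obtain
\[
	g(\b_n+iy) = \sum_{r=0}^{R-1} \frac{(iy)^r}{r!}\, g^{(r)}(\b_n) + \frac{(iy)^R}{(R-1)!}\int_0^1 (1-t)^{R-1}\, g^{(R)}(\b_n+tiy)\,dt.
\]
Substituting the Taylor polynomial into the integral defining $\Mc_{\ell,k}^{[3]}$ in \Cref{lem:M2_split}, and plugging in the explicit value of $g^{(r)}(\b_n)$, reproduces the $r$-th summand $\Mc_{\ell,k,r}^{[4]}$ of \eqref{eq:cMlkv2_def}; checking this is just a matter of matching the numerical prefactor, using $\frac{2(-3/2)^k}{(2\pi)^{3/2}\,r!}=\frac{2(-3/2)^k}{\sqrt{2\pi}\,r!}\cdot\frac1{2\pi}$. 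One then \emph{defines} $\Ec_{\ell,k,R}^{[4]}$ to be the contribution of the remainder term.

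The substantive part, and the step I expect to be the main obstacle, is a uniform estimate for $g^{(R)}$ on the segment $w=(m+\frac12)(\b_n+tiy)$ with $t\in[0,1]$ and $|y|\le\b_n$. On this segment one has $\re(w)=(m+\frac12)\b_n\ge0$ and $|\im(w)|=(m+\frac12)t|y|\le(m+\frac12)\b_n=\re(w)$, so $w$ lies in the sector where \eqref{eq:C67} is available. Since $2\ell+2k+R\ge1$, the identity $(\pd{}{w})^{2\ell+2k+R}\frac1{1+e^{w}}=-(\pd{}{w})^{2\ell+2k+R}\frac1{1+e^{-w}}$ holds, and \eqref{eq:C67} with $j=0$ and order $2\ell+2k+R$, together with $|e^{w}|^{-1}=e^{-\re(w)}=e^{-(m+\frac12)\b_n}$, yields
\[
	\vb{\sqb{\rb{\pd{}{w}}^{2\ell+2k+R}\frac1{1+e^{w}}}_{w=(m+\frac12)(\b_n+tiy)}} \le e^{-(m+\frac12)\b_n}\,C_{2e}(2\ell+2k+R,0),
\]
hence $\vb{g^{(R)}(\b_n+tiy)}\le(m+\frac12)^R e^{-(m+\frac12)\b_n}C_{2e}(2\ell+2k+R,0)$ uniformly in $t$ and $y$. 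This is the point that requires genuine care: one must stay inside the admissible sector for \eqref{eq:C67} while simultaneously extracting the exponential decay $e^{-(m+\frac12)\b_n}$ demanded by the target bound, which is precisely why passing from $\frac1{1+e^{w}}$ to $\frac1{1+e^{-w}}$ is needed. Note also that, unlike the earlier lemmas, no smallness hypothesis on $\b_n$ enters, because this estimate is uniform over the whole sector and $|Z|\le\sqrt2\,\b_n$ always holds on the major arc.

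The remaining estimates are routine bookkeeping. I would bound $\int_0^1(1-t)^{R-1}\,dt\le1$, use $|Z|\le\sqrt2\,\b_n$ to get $|Z|^{2\ell+k+\frac12}\le 2^{\ell+\frac k2+\frac14}\b_n^{2\ell+k+\frac12}$, and apply \Cref{lem:magic32} with $\nu=R$ to control $\int_{|y|\le\b_n}|y|^R\vb{e^{(n-\frac1{24})Z+\frac{\pi^2}{6Z}}}\,dy$. Collecting the powers of $2$, $3$, and $\pi$ (using $12^{\frac{R+1}2}=2^{R+1}3^{\frac{R+1}2}$, and noting that the Taylor remainder leaves $(R-1)!$, rather than $R!$, in the denominator) then reproduces exactly the stated bound on $\vb{\Ec_{\ell,k,R}^{[4]}}$ with $\Dc^{[4]}_{\ell,k,R}$ as displayed; as throughout the paper, these constants are not claimed to be optimal.
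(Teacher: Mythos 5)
Your proposal is correct and follows essentially the same route as the paper: Taylor-expand the factor $\sqb{(\partial/\partial w)^{2\ell+2k}(1+e^w)^{-1}}_{w=(m+\frac12)Z}$ in $Z$ about $\b_n$ with integral remainder, bound the remainder using $C_{2e}(\cdot,0)$ after the (implicit in the paper, but worth spelling out as you do) passage from $(1+e^w)^{-1}$ to $(1+e^{-w})^{-1}$ to pull out $e^{-(m+\frac12)\b_n}$, and finish with $|Z|\le\sqrt2\,\b_n$ and Lemma~\ref{lem:magic32} with $\nu=R$.
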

\begin{proof}
For $j\in\N_0$, we have Taylor series expansion in $Z$ at $Z=\b_n$
\begin{multline}\label{eq:pdu_Taylor}
	\left[\left(\pd{}{w}\right)^j \frac{1}{1+e^w}\right]_{w=\left(m+\frac12\right)Z} = \sum_{r=0}^{R-1} \left[\left(\pd{}{w}\right)^{j+r} \frac{1}{1+e^w}\right]_{w=\left(m+\frac12\right)\b_n} \frac{\left(m+\frac12\right)^r(iy)^r}{r!}\\
	+ \frac{i^R\left(m+\frac12\right)^R}{(R-1)!}\int_0^y (y-u)^{R-1}\left[\left(\pd{}{w}\right)^{j+R} \frac{1}{1+e^w}\right]_{w=\left(m+\frac12\right)(\b_n+iu)} du.
\end{multline}
It follows by \eqref{eq:C67} that the error term in \eqref{eq:pdu_Taylor} is bounded by
\begin{equation}\label{eq:Cnew06_Taylor}
	\frac{\left(m+\frac12\right)^R |y|^R}{(R-1)!}C_{2e}(j+R,0)e^{-\left(m+\frac12\right)\b_n}.
\end{equation}
The lemma then follows from \eqref{eq:Cnew06_Taylor} and \Cref{lem:magic32}.
\end{proof}

{We need yet another decomposition.} 

\begin{lemma}\label{lem:M4_split}
For $\ell,\Lambda\in\N$ and $k,r\in\N_0$, we have a decomposition 
\[
	\Mc_{\ell,k,r}^{[4]} = \sum_{l=\pcei{\frac r2}}^{\La-1} \Mc_{\ell,k,r,l}^{[5]} + \Ec_{\ell,k,r,\La}^{[5]},
\]
where
\begin{multline}\label{eq:M5_def}
	\mc M_{\ell,k,r,l}^{[5]} := \frac{(-1)^l \left(-\frac 32\right)^k 3^{l+\frac 12}}{\pi^{2l+2} (2\ell)!k!r!} a_l^{[r ]} \left(-2\ell-k-\tfrac 32\right) \left(m+\tfrac 12\right)^r   \\
	\times \left[\left(\frac{\partial}{\partial w}\right)^{2\ell+2k+r}{\frac{1}{1+e^w}}\right]_{w = \left(m+\frac 12\right)\beta_n} \beta_n^{2\ell+k+r +l+2} e^{2\mathcal{C}_n},
\end{multline}
and the error term $\mc E_{\ell,k,r,\La}^{[5]}$ satisfies for $\b_n\le x_0$ and some constant $\Dc_{\ell,k,r,\La}^{[5]}(x_0)$
\begin{equation}\label{eq:E5_display}
	\vb{\Ec_{\ell,k,r,\La}^{[5]}} \le \mc D_{\ell,k,r ,\Lambda}^{[5]}(x_0) { \left(m+\tfrac 12\right)}^r  \beta_n^{2\ell+k+r +\Lambda+2} e^{2\mathcal{C}_n-\left(m+\frac 12\right)\beta_n}.
\end{equation}

\end{lemma}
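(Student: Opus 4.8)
The plan is to follow the same Taylor-expansion-plus-integral-remainder template used in the proofs of Lemmas~\ref{lem:M1_split}, \ref{lem:M2_split}, and especially \ref{lem:M3_split}, now applied to the remaining $Z$-dependent factor inside $\Mc_{\ell,k,r}^{[4]}$, namely the integral
\[
	\frac{1}{2\pi}\int_{|y|\le\b_n} Z^{2\ell+k+\frac12} e^{\left(n-\frac{1}{24}\right)Z+\frac{\pi^2}{6Z}}(iy)^r\, dy,
\]
where $Z=\b_n+iy$. First I would complete the contour $|y|\le\b_n$ to the full vertical segment and recognize the resulting integral, after the substitution $Z=\b_n w^{-1}$ (or a rescaling matching the definition of $\Ic_\k$), as producing an $I$-Bessel function of order $-2\ell-k-\frac32$; more precisely, applying the Euler--Maclaurin/saddle-point identification used implicitly to get \Cref{lem:magic32}, the integral equals $\b_n^{2\ell+k+r+2}$ times (a constant multiple of) $\Ic_{2\ell+k+\frac12}\!\left(2\Cc_n\right)$-type quantity with an exponentially small error controlled by \Cref{lem:Bessel_segment}. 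Then I would insert the asymptotic expansion \eqref{eq:Besselv_asymp} for $I_s^{[r]}(w)$ with $s=-2\ell-k-\frac32$ and $w=2\Cc_n$, truncating the expansion after $\La$ terms; the main term of order $l$ in that expansion is exactly the coefficient $a_l^{[r]}\!\left(-2\ell-k-\frac32\right)$ appearing in \eqref{eq:M5_def}, and the factors $3^{l+\frac12}/\pi^{2l+2}$, $\b_n^{2\ell+k+r+l+2}$, $e^{2\Cc_n}$ are produced by unwinding $\b_n=\pi/\sqrt{6(n-\frac1{24})}$ and $\Cc_n=\frac{\pi}{\sqrt6}\sqrt{n-\frac1{24}}$. \Cref{prp:Besselv_asymp} is what guarantees that the expansion effectively starts at $l=\pcei{r/2}$, which is why the sum in the statement runs from that index.

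The key steps, in order: (i) fix $Z=\b_n+iy$ and rewrite $Z^{2\ell+k+\frac12}e^{(n-\frac1{24})Z+\pi^2/(6Z)}(iy)^r$ so that closing up the contour and passing to the Bessel integral representation is transparent — this is the analogue of the reduction behind \Cref{lem:magic32}; (ii) identify $\frac{1}{2\pi}\int_{|y|\le\b_n}\cdots dy$ with $c\,\b_n^{2\ell+k+r+2}\,I^{[r]}_{-2\ell-k-\frac32}(2\Cc_n)$ up to the exponentially small tail bounded via \Cref{lem:Bessel_segment} (the $(iy)^r$ insertion is what turns a single $I$-Bessel into the finite linear combination $I^{[r]}_s$, exactly as the operator $I_s^{[\nu]}$ was set up in the preliminaries); (iii) substitute the truncated expansion \eqref{eq:Besselv_asymp}, collect the term of each order $l$ with $\pcei{r/2}\le l\le\La-1$ into $\Mc_{\ell,k,r,l}^{[5]}$, and invoke \Cref{prp:Besselv_asymp} to drop the vanishing low-order terms; (iv) bound the remainder: the tail of the $e^{w}$-part of \eqref{eq:Besselv_asymp} contributes via $\delta_\La^{[r]}$ and $\b_n^{\La}e^{2\Cc_n}$, the $e^{-w}$-part of \eqref{eq:Besselv_asymp} is exponentially smaller (absorbed into the constant), and the Bessel-segment error from step (ii) is $O(e^{3\Cc_n/2})$, also negligible against $e^{2\Cc_n}$; finally the prefactor $\frac{2(-\frac32)^k(m+\frac12)^r}{\sqrt{2\pi}(2\ell)!k!r!}[(\partial_w)^{2\ell+2k+r}(1+e^w)^{-1}]_{w=(m+\frac12)\b_n}$ multiplies through, producing the claimed shape with $\Dc^{[5]}_{\ell,k,r,\La}(x_0)$ assembled explicitly from $\Ga(\frac{\La+1}{2})$, the binomial/combinatorial constants, $C_{2e}(2\ell+2k+r,0)$, and the explicit constants in the $\delta_N$, $\delta_N^{[\nu]}$ bounds. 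Collecting all these constants into one symbol is routine bookkeeping.

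The main obstacle will be step (ii): cleanly matching the truncated contour integral $\frac{1}{2\pi}\int_{|y|\le\b_n}$ against a genuine $I$-Bessel function (whose integral representation runs over an infinite or closed contour) while keeping the error fully explicit — one must extend the segment to the full line or a Hankel contour, then apply \Cref{lem:Bessel_segment} to control the discarded arc, and simultaneously track how the factor $(iy)^r$, rather than a monomial in $Z$, reorganizes into the combination $I^{[r]}_s$ via the binomial identity defining $I^{[\nu]}_s$. Once that identification is in place with an explicit exponentially-small error, the rest is a mechanical substitution of the known Bessel asymptotics and a (tedious but standard) aggregation of constants, exactly parallel to the previous four lemmas; I would present it in that compressed style, deferring the explicit formula for $\Dc^{[5]}_{\ell,k,r,\La}(x_0)$ to the displayed line at the end of the proof.
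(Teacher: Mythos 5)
Your proposal follows essentially the same route as the paper: change variables $Z\mapsto\b_n Z$ to rewrite the inner integral in \eqref{eq:cMlkv2_def} as $\b_n^{2\ell+k+r+\frac32}\sum_{j=0}^r\binom rj(-1)^{r+j}\mc I_{-2\ell-k-j-\frac32}(2\Cc_n)$, replace each $\mc I_\kappa$ by $I_\kappa$ via \Cref{lem:Bessel_segment}, insert the truncated expansion \eqref{eq:Besselv_asymp} of $I_s^{[r]}$ (which starts at $l=\pcei{r/2}$ by \Cref{prp:Besselv_asymp}), and multiply by the prefactor bounded via $C_{3c}$. The only clarification needed is that what you flag as your ``main obstacle''---matching the truncated segment integral against a genuine $I$-Bessel function---is already precisely the content of \Cref{lem:Bessel_segment}, so no further contour completion or Hankel-type argument is required; and the prefactor here is evaluated at the real point $w=(m+\tfrac12)\b_n$, so the paper uses $C_{3c}$ rather than $C_{2e}$.
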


\begin{proof}
	We make the change of variable $Z\mapsto\b_nZ$ to obtain that the integral in \eqref{eq:cMlkv2_def} (including the previous $\frac{1}{2\pi}$) equals
	\begin{equation}\label{eq:cMlkv2_decomp}
		\b_n^{2\ell+k+r+\frac32}\sum_{j=0}^r \binom rj(-1)^{r+j}\mc I_{-2\ell-k-j-\frac32}(2\Cc_n).
	\end{equation}
	Applying \Cref{lem:Bessel_segment}, we can approximate the sum in \eqref{eq:cMlkv2_decomp} with $I$-Bessel functions. To be more precise, we have, for $\beta_n \leq x_0$
	\begin{multline}\label{eq:Cnew12_def}
		\b_n^{2\ell+k+r+\frac32}\vb{\sum_{j=0}^r \binom rj(-1)^{r+j}\left(\Ic_{-2\ell-k-j-\frac32}(2\Cc_n)-I_{-2\ell-k-j-\frac32}(2\Cc_n)\right)}\\
		\le \frac{\b_n^{2\ell+k+r+\frac32}}{\pi}\sum_{j=0}^r \binom rj2^{\ell+\frac k2+\frac j2+\frac14}\left(\frac{\Ga\left(2\ell+k+j+\frac32\right)}{\Cc_n^{2\ell+k+j+\frac32}}+2e^\frac{3\Cc_n}{2}\right) \le C_{3a}(\ell,k,r,x_0)e^\frac{3\Cc_n}{2}
	\end{multline}
	for some constant $C_{3a}(\ell,k,r,x_0)$. Now we apply \eqref{eq:Besselv_asymp} (with $M=0$) and obtain for $\La\in\N$ 
	\begin{align}\nonumber
		&\b_n^{2\ell+k+r+\frac32} \left|I_{-2\ell-k-\frac32}^{[r]}(2\Cc_n) - \frac{e^{2\Cc_n}}{2\sqrt{\pi\Cc_n}} \sum_{l=\pcei{\frac r2}}^{\Lambda-1} (-1)^l a_l^{[r]}\left(-2\ell-k-\tfrac32\right)(2\Cc_n)^{-l}\right|\\
		&\hspace{1.5cm}\le \b_n^{2\ell+k+r+\frac32} \left(\frac{e^{2\Cc_n}}{2\sqrt{\pi\Cc_n}} \left|\d_\Lambda^{[r]}\left(-2\ell-k-\tfrac32,2\Cc_n\right)\right| + \frac{e^{-2\Cc_n}}{2\sqrt{\pi\Cc_n}}\left|\g_0^{[r]}\left(-2\ell-k-\tfrac32,2\Cc_n\right)\right|\right)\nonumber\\
		&\hspace{1.5cm} \le C_{3b}(\ell,k,r,\Lambda,x_0)\b_n^{2\ell+k+r+\Lambda+2}e^{2\Cc_n}\label{eq:Cnew13_def}
	\end{align}
	for $\b_n\le x_0$ and some constant $C_{3b}(\ell,k,r,\La,x_0)$. Plugging this back into \eqref{eq:cMlkv2_decomp}, we conclude that
	\begin{multline}\label{eq:C1213_bound}
		\vb{\frac{1}{2\pi}\int_{|y|\le\b_n} Z^{2\ell+k+\frac12} e^{\left(n-\frac{1}{24}\right)Z+\frac{\pi^2}{6Z}}(iy)^r dy - \sum_{l=\pcei{\frac r2}}^{\La-1} \frac{(-1)^l3^{l+\frac12}}{\sqrt{2}\pi^{2l+\frac32}}a_l^{[r]}\left(-2\ell-k-\frac32\right)\b_n^{2\ell+k+r+l+2}e^{2\Cc_n}}\\
		\le C_{3b}(\ell,k,r,\La,x_0)\b_n^{2\ell+k+r+\La+2}e^{2\Cc_n} + C_{3a}(\ell,k,r,x_0)e^\frac{3\Cc_n}{2}
	\end{multline}
	for $\beta_n\le x_0$, by \eqref{eq:Cnew12_def} and \eqref{eq:Cnew13_def}.  { It then follows from \eqref{eq:C1213_bound} that 
	\begin{align*}\nonumber
		\vb{\mc E_{\ell,k,r,\Lambda}^{[5]}} &\le \frac{2\left(\frac32\right)^k\left(m+\frac12\right)^r}{\sqrt{2\pi}(2\ell)!k!r!} C_{3c}(2\ell+2k+r)\\
		&\times \left(C_{3b}(\ell,k,r,\La,x_0)\b_n^{2\ell+k+r+\La+2}e^{2\Cc_n-\left(m+\frac12\right)\b_n} + C_{3a}(\ell,k,r,x_0)e^{\frac32\Cc_n-\left(m+\frac12\right)\b_n}\right)
	\end{align*}
	for $\b_n\le x_0$, where} $C_{3c}(j)$ is a constant such that, for $w\in\R_0^+$, 
	\begin{equation}\label{eq:Cnew14_def}
		\vb{e^w \left(\frac{ \partial}{\partial w}\right)^j {\frac{1}{1+e^w}}} \le C_{3c}(j).
	\end{equation}
	Now let $\mc D_{\ell,k,r ,\Lambda}^{[5]}(x_0)$ be constants such that
	\begin{align*}
		&\frac{2 \left(\frac 32\right)^k \left(m+\frac 12\right)^r }{\sqrt{2\pi} (2\ell)!k!r !} C_{3c}(2\ell+2k+r )\\
		&\hspace{15mm}\times \left(C_{3b}(\ell,k,r,\Lambda,x_0) + C_{3a}(\ell,k,r ,x_0) e^{-\frac{\Cc_n}{2}} \beta_n^{-(2\ell+k+r+\Lambda+2)}\right) \beta_n^{2\ell+k+r+\Lambda+2} e^{2\Cc_n-\left(m+\frac 12\right)\beta_n}\\
		&\hspace{75mm} \le \mc D_{\ell,k,r ,\Lambda}^{[5]}(x_0) {\textstyle \left(m+\frac 12\right)}^r  \beta_n^{2\ell+k+r +\Lambda+2} e^{2\mathcal{C}_n-\left(m+\frac 12\right)\beta_n}
	\end{align*}
	for $\b_n\le x_0$. It is straightforward to see that $\Dc_{\ell,k,r,\La}^{[5]}(x_0)$ exists.  The lemma now follows.
\end{proof}

\subsection{An extra decomposition}
{We conclude this section with one final decomposition we require in the proof of our main result.}
In this subsection, assume that $m\ll\b_n^{-1}$. For $j\in\N_0$, consider
\begin{equation}\label{fjdefn}
	f_j(u) := e^u \left(\frac{\partial}{\partial u}\right)^j {\frac{1}{1+e^u}}. 
\end{equation}
By Taylor's Theorem, for $\g\ge0$, there is a constant $C_4(j,N,\g)$ such that, for $0\le u\le\g$,
\begin{equation}\label{eq:Cnew15_def}
	\left|f_j(u)-\sum_{\nu=0}^{N-1} f_j^{(\nu)}(0)\frac{u^\nu}{\nu!}\right| \le C_4(j,N,\g)u^N.
\end{equation}
Using the Taylor expansion above, we immediately obtain the following lemma.
\begin{lemma}\label{lem:M5_split}
Let $\ell,N\in\N$, $k,r,l\in\N_0$ Then we have a decomposition
\[
	\mc M_{\ell,k,r,l}^{[5]} = \sum_{\nu=0}^{N-1} \mc M_{\ell,k,r,l,\nu}^{[6]} + \mc E_{\ell,k,r ,l,N}^{[6]},
\]
where
\begin{align}\nonumber
	\Mc_{\ell,k,r,l,\nu}^{[6]} &:= \frac{(-1)^l\left(-\frac32\right)^k3^{l+\frac12}}{\pi^{2l+2}(2\ell)!k!r!\nu!} a_l^{[r]}\left(-2\ell-k-\tfrac32\right)\left(m+\tfrac12\right)^{r+\nu}f_{2\ell+2k+r}^{(\nu)}(0) \b_n^{2\ell+k+r+l+\nu+2}\\
	&\hspace{10.5cm}\times e^{2\Cc_n-\left(m+\frac12\right)\b_n},\label{eq:M6_def}\\
	\Ec_{\ell,k,r,l,N}^{[6]} &:= \frac{\left(\tfrac32\right)^k3^{l+\tfrac12}}{\pi^{2l+2}(2\ell)!k!r!} \vb{a_l^{[r]}\left(-2\ell-k-\tfrac32\right)}\left(m+\tfrac12\right)^{r+N}\nonumber\\
	&\hspace{-1cm}\times \left(f_{2\ell+2k+r}\left(\left(m+\tfrac12\right)\b_n\right)-\sum_{\nu=0}^{N-1} f_{2\ell+2k+r}^{(\nu)}(0)\frac{\left(m+\frac12\right)^\nu\b_n^\nu}{\nu!}\right) \b_n^{2\ell+k+r+l+N+2}e^{2\Cc_n-\left(m+\frac12\right)\b_n}.\nonumber
\end{align}
For $\gamma>0$ and $(m+\frac 12)\beta_n \le \gamma$, the error term $\mc E_{\ell,k,r,l,N}^{[6]}$ satisfies
\begin{align}\nonumber
	\vb{\Ec_{\ell,k,r,l,N}^{[6]}} &\le \frac{\left(\tfrac32\right)^k3^{l+\frac12}}{\pi^{2l+2}(2\ell)!k!r!} \vb{a_l^{[r]}\left(-2\ell-k-\tfrac32\right)}\left(m+\tfrac12\right)^{r+N}C_4(2\ell+2k+r,N,\g)\\
	&\hspace{3cm} \times \b_n^{2\ell+k+r+l+N+2}e^{2\Cc_n-\left(m+\frac12\right)\b_n}\nonumber\\
	&=: \Dc_{\ell,k,r,l,N}^{[6]}(\g) \b_n^{2\ell+k+r+l+N+2} e^{2\Cc_n-\left(m+\frac12\right)\b_n}\label{eq:E6_display}.
\end{align}
\end{lemma}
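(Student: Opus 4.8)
The decomposition itself is a tautology once $\mc E^{[6]}_{\ell,k,r,l,N}$ is defined as the difference $\mc M^{[5]}_{\ell,k,r,l}-\sum_{\nu=0}^{N-1}\mc M^{[6]}_{\ell,k,r,l,\nu}$; the substance of the lemma is the bound \eqref{eq:E6_display}, and the plan is to derive it directly from the Taylor estimate \eqref{eq:Cnew15_def}. The first step is to rewrite the derivative occurring in \eqref{eq:M5_def} in terms of the functions $f_j$ of \eqref{fjdefn}. Since $\left(\tfrac{\partial}{\partial w}\right)^j\tfrac{1}{1+e^w}=e^{-w}f_j(w)$, evaluation at $w=\left(m+\tfrac12\right)\beta_n$ gives
\[
	\left[\left(\tfrac{\partial}{\partial w}\right)^{2\ell+2k+r}\tfrac{1}{1+e^w}\right]_{w=\left(m+\frac12\right)\beta_n}=e^{-\left(m+\frac12\right)\beta_n}\,f_{2\ell+2k+r}\!\left(\left(m+\tfrac12\right)\beta_n\right),
\]
which already accounts for the exponential factor $e^{-\left(m+\frac12\right)\beta_n}$ appearing in the definitions of $\mc M^{[6]}_{\ell,k,r,l,\nu}$ and $\mc E^{[6]}_{\ell,k,r,l,N}$.

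Next I would apply \eqref{eq:Cnew15_def} with $j=2\ell+2k+r$, with $u=\left(m+\tfrac12\right)\beta_n$, and with the constant $\gamma$ from the statement; this is legitimate precisely because $\left(m+\tfrac12\right)\beta_n\le\gamma$, which holds in the regime $m\ll\beta_n^{-1}$ of this subsection. This writes $f_{2\ell+2k+r}\!\left(\left(m+\tfrac12\right)\beta_n\right)$ as $\sum_{\nu=0}^{N-1}f_{2\ell+2k+r}^{(\nu)}(0)\,\frac{\left(m+\frac12\right)^\nu\beta_n^\nu}{\nu!}$ plus a remainder of absolute value at most $C_4\!\left(2\ell+2k+r,N,\gamma\right)\left(m+\tfrac12\right)^N\beta_n^N$. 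Substituting this into \eqref{eq:M5_def} and distributing the prefactor together with the factor $e^{-\left(m+\frac12\right)\beta_n}$: the $\nu$-th term of the finite sum becomes exactly $\mc M^{[6]}_{\ell,k,r,l,\nu}$ as in \eqref{eq:M6_def}, and the remainder term becomes $\mc E^{[6]}_{\ell,k,r,l,N}$.

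Finally, replacing the remainder by its $C_4$-bound and passing to absolute values in the prefactor of \eqref{eq:M5_def} (so that $\left(-\tfrac32\right)^k\mapsto\left(\tfrac32\right)^k$ and $a_l^{[r]}(\cdot)\mapsto\left|a_l^{[r]}(\cdot)\right|$) yields \eqref{eq:E6_display} with $\mc D^{[6]}_{\ell,k,r,l,N}(\gamma)$ the displayed constant. There is no genuine obstacle in this argument; the only point requiring care is the bookkeeping, namely checking that the powers $\beta_n^{2\ell+k+r+l+\nu+2}$ (resp.\ $\beta_n^{2\ell+k+r+l+N+2}$), the powers of $\left(m+\tfrac12\right)$ (the extra $\left(m+\tfrac12\right)^\nu$ resp.\ $\left(m+\tfrac12\right)^N$ multiplying the $\left(m+\tfrac12\right)^r$ already present in \eqref{eq:M5_def}), and the factorial factors $\tfrac1{\nu!}$ match up correctly across \eqref{eq:M5_def}, \eqref{eq:M6_def} and \eqref{eq:E6_display}.
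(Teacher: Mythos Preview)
Your approach is correct and is precisely the paper's own argument: the paper simply states that the lemma follows ``immediately'' from the Taylor estimate \eqref{eq:Cnew15_def}, and you have spelled out exactly how --- rewrite the derivative in \eqref{eq:M5_def} as $e^{-(m+\frac12)\beta_n}f_{2\ell+2k+r}\!\left((m+\tfrac12)\beta_n\right)$ via \eqref{fjdefn}, insert the Taylor expansion with remainder, and read off $\mc M^{[6]}$ and the bound on $\mc E^{[6]}$. One small bookkeeping remark: the displayed formula for $\mc E^{[6]}_{\ell,k,r,l,N}$ in the statement already carries absolute values on the prefactor and an extra factor $(m+\tfrac12)^N\beta_n^N$, so it is not literally equal to $\mc M^{[5]}-\sum_\nu\mc M^{[6]}$; but this is an artifact of the paper's presentation, and your derivation of the actual content, the bound \eqref{eq:E6_display}, is unaffected.
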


\section{Numerical computations}\label{section:num_com}

In this section, we prove the following theorem which establishes our main result for small $m$. Recall the definitions of $\b_n$ and $\La_n$ from \eqref{eq:bL_def}.

\begin{theorem}\label{thm:m_small}
	Let $n\ge925276$. Then we have, for {$m\le 3^{-1}\b_n^{-\frac32}-\frac 12$}, 
\begin{equation}
		N(m,n) - N(m+1,n) \ge 0.\label{rankdiff}
\end{equation}
\end{theorem}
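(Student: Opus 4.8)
The plan is to combine all the estimates assembled in Section~\ref{section:cm} into a single asymptotic expansion for $N(m,n)-N(m+1,n)$ with a completely explicit error bound, and then to verify by a finite computation that the leading positive term dominates all error contributions once $n\ge 925276$ and $m\le 3^{-1}\b_n^{-3/2}-\tfrac12$. Concretely, I would start from the splitting $N(m,n)-N(m+1,n)=\mf M+\mf m$, absorb $\mf m$ into the error via Proposition~\ref{prp:min_arc_bound} (it is $O(e^{3\Cc_n/2})$, exponentially smaller than the main term $e^{2\Cc_n}$), and then unwind the chain of decompositions $\mf M = \Mc^{[1]}+\Ec^{[1]}$, $\Mc^{[1]} = \sum_\ell \Mc_\ell^{[2]}+\Ec_L^{[2]}$, $\Mc_\ell^{[2]}=\sum_k\Mc_{\ell,k}^{[3]}+\Ec_{\ell,K}^{[3]}$, and so on through $\Mc^{[4]}$, $\Mc^{[5]}$, $\Mc^{[6]}$ from Lemmas~\ref{lem:M1_split}--\ref{lem:M5_split}. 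Each layer strips off one more source of error whose size is controlled by an explicit constant ($\Dc^{[2]}_L$, $\Dc^{[3]}_{\ell,K}$, $\Dc^{[4]}_{\ell,k,R}$, $\Dc^{[5]}_{\ell,k,r,\La}$, $\Dc^{[6]}_{\ell,k,r,l,N}$) times a power of $\b_n$ times $e^{2\Cc_n-(m+\frac12)\b_n}$.

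The key observation is that the leading surviving term of $\Mc_{\ell,k,r,l,\nu}^{[6]}$ comes from $\ell=1$, $k=r=\nu=0$, $l=1$ (recall from Proposition~\ref{prp:Besselv_asymp} that $a_l^{[r]}(\k)=0$ for $l<r/2$, and $a_0(\k)=1$ forces the $l=0$ term to cancel against pieces of the expansion since $f_2(0)\ne 0$ but the combinatorics of the $\De$-operator kills low-order terms), producing a main term proportional to $\b_n^{5}e^{2\Cc_n-\b_n/2}$ with a strictly positive coefficient. Here $f_2(u)=e^u(\partial_u)^2(1+e^u)^{-1}$, and one computes $f_2(0)$ explicitly; the sign works out because $H_m(e^{-Z})$ begins at order $Z^2$ with a positive coefficient (from the $r=1$ term $(q^{-1/2}-q^{1/2})^2$). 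So after collecting terms I expect an identity of the shape
\[
N(m,n)-N(m+1,n) = c\,\b_n^{\,5}\,e^{2\Cc_n-(m+\frac12)\b_n}\bigl(1 + \mathcal R(m,n)\bigr),
\]
with $c>0$ an explicit rational multiple of a power of $\pi$, and $\mathcal R(m,n)$ a finite sum of explicitly bounded terms, each of which is either a higher power of $\b_n$ (coming from $L,K,R,\La,N$ chosen large enough — the proof of Proposition~\ref{prop:Cnew11_def} shows $J\ge 2\ell+2K+4$ suffices and $\vr>\tfrac12$ can be pinned down) or an extra factor of $(m+\tfrac12)\b_n^{3/2}\le 1$ (from the $(m+\tfrac12)^r$ and $(m+\tfrac12)^{r+\nu}$ growth, which under the hypothesis $m\le 3^{-1}\b_n^{-3/2}-\tfrac12$ is uniformly bounded), or exponentially smaller ($e^{-\Cc_n/2}$ from $\mf m$, $\Ec^{[1]}$, and the $C_{3a}$ pieces).

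The implementation: fix explicit truncation parameters $L$, $K$, $R$, $\La$, $N$, $J$, $x_0$, $\vr$, $\g$ (these are chosen once and for all — I would take them moderately large, enough that every leftover $\b_n$-power exceeds $\b_n^5$ by the needed margin), compute each of the finitely many constants $\Dc^{[\bullet]}$ either symbolically or with rigorous interval arithmetic, and then reduce the inequality $1+\mathcal R(m,n)>0$ to an elementary estimate: $|\mathcal R(m,n)| < 1$ holds provided $\b_n$ is below an explicit threshold and $(m+\tfrac12)\b_n^{3/2}\le 1/3$. Since $\b_n = \pi/\sqrt{6(n-1/24)}$, the first condition becomes $n\ge n_0$ for an explicit $n_0$; the numeric value $925276$ is precisely the $n_0$ that falls out after plugging in the chosen parameters. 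The main obstacle is bookkeeping: one must track all the nested constants through five layers of decomposition without losing factors, and verify that the exponent accounting in each $\Dc^{[\bullet]}$ really does leave a gap of at least $\b_n^{1}$ (one clean power) over the main term after the $(m+\tfrac12)^{r}\le (3\b_n^{3/2})^{-r}\cdot(\text{const})$ trade-off — equivalently, that choosing $R,N$ large enough makes the $m$-dependence harmless. I do not expect any genuinely new analytic difficulty beyond what Lemmas~\ref{lem:M1_split}--\ref{lem:M5_split} already provide; the work is to assemble them and certify the constants, which is exactly what Section~\ref{section:num_com} is set up to do, with the final elementary monotonicity check in $n$ completing the proof.
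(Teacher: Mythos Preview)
Your overall architecture is correct --- assemble the chain of decompositions from Section~\ref{section:cm}, isolate a positive leading term, and certify numerically that all remaining contributions are strictly smaller --- and this is exactly what the paper does. However, there are two concrete problems.

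First, your identification of the leading term is off. You claim $f_2(0)\ne 0$, but in fact
\[
f_2(0) = \left[\left(\tfrac{\partial}{\partial u}\right)^2\tfrac{1}{1+e^u}\right]_{u=0} = 0,
\]
so the term $\Mc_{1,0,0,1,0}^{[6]}$ you single out (indices $l=1$, $\nu=0$) vanishes identically. The actual leading contribution in the small-$m$ regime is $\Mc_{1,0,0,0,1}^{[6]}$ (indices $l=0$, $\nu=1$), which carries $f_2^{(1)}(0)=\tfrac18\ne 0$ and the factor $(m+\tfrac12)$. This is a bookkeeping slip but it matters: without the correct main term you cannot set up the ratio test.

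Second, and more seriously, a single fixed parameter list $(L,K,R,\La,N,J,x_0,\vr,\g)$ cannot cover the entire range $m+\tfrac12\le 3^{-1}\b_n^{-3/2}$. The sixth layer (Lemma~\ref{lem:M5_split}) requires the hypothesis $(m+\tfrac12)\b_n\le\g$ for a \emph{fixed} $\g$; but under the theorem's hypothesis this quantity ranges up to $\tfrac13\b_n^{-1/2}\to\infty$, so no fixed $\g$ suffices. The paper therefore splits into two regimes. For $m+\tfrac12\le\b_n^{-1}$ it sets $\g=1$ and uses $\Mc^{[6]}$ with main term $\Mc_{1,0,0,0,1}^{[6]}\asymp (m+\tfrac12)\b_n^5 e^{2\Cc_n-(m+\frac12)\b_n}$. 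For $\b_n^{-1}\le m+\tfrac12\le 3^{-1}\b_n^{-3/2}$ it \emph{skips} the $\Mc^{[6]}$ layer entirely and works directly with $\Mc^{[5]}$: here the main term $\Mc_{1,0,0,0}^{[5]}$ involves $f_2((m+\tfrac12)\b_n)$ evaluated at an argument $\ge 1$, where $f_2$ is bounded below by an explicit positive constant, yielding a main term $\asymp\b_n^4 e^{2\Cc_n-(m+\frac12)\b_n}$. The two regimes also use different values of $R$ (namely $R=1$ versus $R=4$) to keep the $(m+\tfrac12)^R$ factors in $\Ec^{[4]}$ under control. Your proposal, as written, does not account for this split and would leave the range $\b_n^{-1}<m+\tfrac12\le 3^{-1}\b_n^{-3/2}$ unproven.
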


{The strategy is to apply the decompositions above, and choose parameters such that the sizes of the error terms are quite small. The parameters we choose also depend on the size of $m$. Precisely, for the proof of the theorem, we pick $(x_0,L,K,R,\Lambda,N,J) = (\frac{1}{750},5,4,1,2,2,50)$ for $m+\frac 12 \le \beta_n^{-1}$, and $(x_0,L,K,R,\Lambda,J) = (\frac{1}{750},5,4,4,2,50)$ for $\frac{1}{\beta_n} \le m+\frac 12 \le 3^{-1}\b_n^{-\frac32}$. Once these parameters are chosen, we need to compute the following constants to obtain the error bounds:} 
\begin{itemize}[leftmargin=*]
	\item $C_{2a}$: We need $C_{2a}(50,\frac{1}{500})$.
	
	\item $C_{2b}$: We need $C_{2b}(50,\nu)$ for $0\le \nu\le 24$. 
	
	\item $C_{2e}$: We need $C_{2e}(k,j)$ for $k \in \cb{10,12,14,16}$, $0\le j \le 49$, as well as $C_{2e}(k,0)$ for $2\le k \le 18$. 
	
	\item $C_{2f}$: We need $C_{2f}(j,\frac{1}{500})$ for $0\le j \le 49$.
	
	\item $C_{2g}$: We need $C_{2g}(j,\vr)$ for $0\le j\le16$. Note that at this stage the parameter $\vr$ is not specified. In principle, we want to choose $\vr$ such that the overall error is as small as possible. This is however quite tedious. Therefore we instead pick the parameter $\vr$ such that the overall error is quite small. Hiding the trial-and-error steps, we are interested in $C_{2g}(j,\vr)$ for $0\le j\le16$, and $\vr\in\{{ 0.66021,0.66950,0.67773,0.68510}\}$.
	
	\item $C_{2i}$: We need $C_{2i}(50,k)$ for $0\le k \le 16$.
	
	\item $C_{3a}$: We need $C_{3a}(\ell,k,r,\frac{1}{750})$ for $1\le \ell \le 4$, $0\le k \le 3$, $0 \le r \le 3$.
	
	\item $C_{3b}$: We need $C_{3b}(\ell,k,r,2,\frac{1}{750})$ for $1\le \ell \le 4$, $0\le k \le 3$, $0\le r \le 3$.
	
	\item $C_{3c}$: We need $C_{3c}(j)$ for $2\le j \le 17$.
	
	\item $C_4$: We need to compute $C_4(j,2,1)$ for $2\le j \le 14$.
\end{itemize}

The constants $C_1, C_2, C_{2c},C_{2d}$, and $C_{2h}$ are defined explicitly, possibly in terms of other constants. The constants $C_{2b},C_{2e},C_{2f},C_{2g},C_{2i},C_{3a},C_{3b},C_{3c},$ and $C_4$ are found using one-dimensional optimization. We now give an outline on how to compute these.
\begin{enumerate}[label={(\arabic*)},wide,labelwidth=!,labelindent=0pt]
	\item For $C_{2b}$, we have to compute $C_{2b}(50,\nu)$ for $0\le\nu\le24$. As the defining function \eqref{eq:Cnew03_def} decays as $\re(w)\to\infty$, it suffices to consider the function over a (sufficiently large) compact region in $Z$. By the maximum modulus principle, the maximal modulus of the function occurs at the boundary of the region. So we may find an optimal value for $C_{2b}(50,\nu)$ using a one-dimensional plot along the lines $|\im(w)|=\re(w)$. The same argument applies for $C_{2e}$ and $C_{2i}$ (see \eqref{eq:C67} and \eqref{eq:Cnew10_def}) if the parameters are fixed. For $C_{2f}$ (see \eqref{eq:C67}), the region under consideration is by definition bounded, so we do not need to consider limits.
	
	\item For $C_{2g}$, $C_{3a}$, $C_{3b}$, $C_{3c}$, and $C_4$, we can find their optimal values (if the parameters are fixed) using real-valued functions (see \eqref{eq:Cnew08_def}, \eqref{eq:Cnew12_def}, \eqref{eq:Cnew13_def}, \eqref{eq:Cnew14_def}, and \eqref{eq:Cnew15_def}, respectively).
\end{enumerate}

It remains to compute $C_{2a}(50,\frac{1}{500})$, which is computationally more difficult. We recall the definition in \eqref{eq:Cnew02_def}. We show that we may take
\begin{equation}\label{eq:Cnew02_numeric}
	C_{2a} \left(50,\tfrac{1}{500}\right) = \vb{\sum_{j=0}^{50} \binom{50}{j}(-1)^je^{-\left(2\cdot563j+j^2\right)\frac{1+i}{500}}} \approx 30.02166509.
\end{equation}
We start with some observations.
\begin{enumerate}[label={(\arabic*)},wide,labelwidth=!,labelindent=0pt]
	\item If $r$ is fixed, then
	\begin{equation*}
		\sum_{j=0}^{50} \binom {50}j (-1)^j e^{-\left(2rj+j^2\right)Z}
	\end{equation*}
	is  holomorphic in $Z$. By the maximum modulus principle, the left-hand side of \eqref{eq:Cnew02_def} obtains its maximum on the boundary of $0\le \vb{y} \le x \le \frac{1}{500}$. That is, \eqref{eq:Cnew02_def} is maximal if $y=\pm x$, or if $0\le \vb{y} \le x = \frac{1}{500}$. As \eqref{eq:Cnew02_def} is invariant under complex conjugation, we may assume that $y\ge 0$.
	
	\item First assume that $rx\ge\frac32$. Then $|e^{-2rZ}|\le e^{-3}$, and we have the trivial bound
	\ba
		\vb{\sum_{j=0}^{50} \binom{50}j (-1)^j e^{-\left(2rj+j^2\right)Z}} \le \sum_{j=0}^{50} \binom{50}j e^{-3j} = \rb{1+e^{-3}}^{50} \approx 11.35170075.
	\ea
	So the function satisfies the bound \eqref{eq:Cnew02_numeric} if $rx\ge\frac32$. It suffices to consider the case where $0\le rx\le\frac32$.
\end{enumerate}

First consider the easier case $0\le y\le x=\frac{1}{500}$ from observation (1). By observation (2) we only have to consider the case $r\le\frac32\cdot500=750$. As $r\in\N_0$, we can verify that our constant works in this case, with $x=\frac{1}{500}$ and $0\le r\le750$ fixed, and let $0\le y\le\frac{1}{500}$ vary.

In the trickier case $0 \le y = x \le \frac{1}{500}$ from observation (1), \eqref{eq:Cnew02_def} becomes
\begin{equation}\label{eq:Cnew02_xy}
	\vb{\sum_{j=0}^{50} \binom {50}j (-1)^j e^{-\left(2rjx+j^2x\right)(1+i)}} \le C_{2a}(J,x_0).
\end{equation}
Let 
\ba
	f(\rho,x) := \sum_{j=0}^{50} \binom {50}j (-1)^j e^{-\left(2j\rho+j^2x\right)(1+i)}.
\ea
The function on the left-hand side of \eqref{eq:Cnew02_xy} is $|f(rx,x)|$. By observation (2) we only have to consider the compact region $0\le\rho\le\frac32$, $0\le x\le\frac{1}{500}$. We split $f(\rho,x)$ into real and imaginary parts
\ba
	f(\rho,x) = \sum_{j=0}^{50} \binom {50}j (-1)^j e^{-\left(2j\rho+j^2x\right)} \rb{\cos\rb{2j\rho+j^2x} - i\sin\rb{2j\rho+j^2x}},
\ea
and use this to obtain an expression of $\vb{f(\rho,x)}^2$
\ba
	\scalebox{0.9}{$\displaystyle \vb{f(\rho,x)}^2 = \rb{\sum_{j=0}^{50} \binom {50}j (-1)^j e^{-\left(2j\rho+j^2x\right)} \cos\rb{2j\rho+j^2x}}^2 + \rb{\sum_{j=0}^{50} \binom {50}j (-1)^j e^{-\left(2j\rho+j^2x\right)} \sin\rb{2j\rho+j^2x}}^2.$}
\ea
We consider the Taylor series expansion for $\vb{f(\rho,x)}$ at $x=0$ as a function of $x$
\ba
	\vb{f(\rho,x)} = \sum_{\ell=0}^{L-1} \frac{c_\ell(\rho)}{\ell!} x^\ell + R_L.
\ea
Here $c_\ell$ is a function of $\rho$, and for $x\ge0$, $R_L$ is bounded by
\ba
	\vb{R_L} \le \frac{x^L}{L!} \max_{0\le u\le x}\vb{\pd{^L}{u^L} \vb{f(\rho,u)}} \le \frac{x^L}{L!} \sum_{j=0}^{50} \binom{50}j j^{2L}.
\ea
Let $d_\ell := \max\cb{c_\ell(\rho) \;:\; 0\le\rho\le\frac 32}$. Then we have for $0\le \rho \le \frac 32$ and $x\ge 0$
\begin{equation}\label{eq:frx_ub}
	\vb{f(\rho,x)} \le \sum_{\ell=0}^{L-1} \frac{d_\ell}{\ell!} x^\ell + \frac{x^L}{L!} \sum_{j=0}^{50} \binom{50}j j^{2L}.
\end{equation}

We check that
\ba
	d_0&\le 28.68558614, & d_1 &\le 1220.781415, & d_2&\le 99722.52362, & d_3 &\le 12254165.90,\\
	d_4&\le 2040711473, & d_5 &\le 4.319895682\cdot 10^{11}, & d_6&\le 1.111629852\cdot 10^{14},\\
	d_7 &\le 3.373057489\cdot 10^{16}, & d_8&\le 1.194739620\cdot 10^{19}, & d_9 &\le 5.092346787\cdot 10^{21}.
\ea
Plugging into to \eqref{eq:frx_ub}, we compute that 
\ba
	\vb{f(\rho,x)} \le 29.70793207 \text{ for } 0\le \rho \le \tfrac 32, \; 0 \le x \le \tfrac{1}{6000}.
\ea
So it remains to check the region $\frac{1}{6000}\le x\le\frac{1}{500}$. Since we may assume from observation (2) that $0\le rx\le\frac32$, and $r\in\N_0$, we can verify that our constant works in this case by considering $0\le r\le9000$ fixed, and let $0\le x\le\frac{1}{500}$ vary.

\begin{remark}
	For our calculations, we always pick the optimal values for these named constants, computed using numerical methods. As usual, values used for error bounds are always rounded up, and values used for main terms are always rounded down. For the convenience to the reader, we also sometimes display the approximated values for the constants. These approximations are marked with the approximation sign $\approx$.
\end{remark}
{With these observations in mind we now prove the main result of this section.}
\begin{proof}[Proof of \Cref{thm:m_small}]
	First suppose that $m+\frac 12\le \frac1{\beta_n}$, i.e., we set $\gamma = 1$. We pick $x_0 = \frac1{750}$, and $L=5$, $K=4$, $R=1$, $\Lambda=2$, $N=2$, and $J=50$. Since the constants $C_j$ are now determined, we are able to compute the error bounds $\Dc^{[j]}$. From \eqref{eq:M6_def}, the main term is given by
	\[
		\Mc_{1,0,0,0,1}^{[6]} \ge 0.1082531754 \left(m+\tfrac12\right)\b_n^5e^{2\La_n-\left(m+\frac12\right)\b_n}. 
	\]
	Next we compute the error terms. We ``optimize'' the error $\Dc_{\ell,K}^{[3]}$ with respect to the parameter $\vr$
	\begin{align*}
		\Dc_{1,4}^{[3]}(x_0,J,{ 0.66021}) &\approx { 6.448427833}\cdot10^6, &\Dc_{2,4}^{[3]}(x_0,J,{ 0.66950}) &\approx {1.424375158}\cdot10^{10},\\
		\Dc_{3,4}^{[3]}(x_0,J,{0.67773}) &\approx { 1.394394143}\cdot10^{13}, &\Dc_{4,4}^{[3]}(x_0,J,{ 0.68510}) &\approx { 8.019576913}\cdot10^{15}.
	\end{align*}
	Using these bounds, we compute for $\beta_n \le x_0$ that the contribution from $\mc M_{\ell,k,r,l,\nu}^{[6]}$ for $(\ell,k,r,l,\nu)\ne (1,0,0,0,1)$ and all the error terms is bounded by
	\[
		\scalebox{0.98}{$\Dc_a := \left({ 0.02839882501}\b_n^5 + 0.01275733148\left(m+\frac12\right)\b_n^5 + 0.01145145086\left(m+\frac12\right)^2\b_n^6\right) e^{2\Cc_n-\left(m+\frac12\right)\b_n}.$}
	\]
	As $m\ge 0$ and $(m+\frac 12)\beta_n \le 1$, we conclude that	for $\b_n\le x_0$, and $m+\frac12\le\frac1{\b_n}$,
	\[
		\frac{\Dc_a}{\Mc_{1,0,0,0,1}^{[6]}} \le \frac{2\cdot{0.02839882501}+0.01275733148+0.01145145086}{0.1082531754} \le { 0.7483053690}.
	\]

	Now suppose $\frac1{\b_n}\le m+\frac12\le c{\b_n^{-\frac32}}$. Again pick $x_0=\frac{1}{750}$. This time we use $L=5$, $K=4$, $R=4$, and $\La=2$. {Recalling the definition of $f_j(u)$ in \eqref{fjdefn}, we} compute that $f_2(u)\ge0.2469769644$ for $u\ge1$. It follows that for $m+\frac12\ge\b_n^{-1}$,
	\[
		\mc M_{1,0,0,0}^{[5]} \ge 0.02167141828 \cdot \beta_n^4 e^{2\mathcal{C}_n-\left(m+\frac 12\right)\beta_n}.
	\]
	 Using the constant $C_{3c}$, we could bound the size of other terms $\Mc_{\ell,k,r,l}^{[5]}$ and treat them as error. We compute for $\b_n\le x_0$ that the contribution from $\Mc_{\ell,k,r,l}^{[5]}$ and all the error terms is bounded by
	\begin{multline*}
		\Dc_b(c) := \left({ 0.0004276419495}+0.004001738087c+0.01799670557c^2+0.1370655930c^3\right.\\
		\left.+0.8767499596c^4\right)\b_n^4e^{2\Cc_n-\left(m+\frac12\right)\b_n}.
	\end{multline*}
	Picking $c = \frac 13$, we compute that for $\beta_n \le x_0$, and $\frac1{\beta_n} \le m+\frac 12 \le {3^{-1}\beta_n^{-\frac 32}}$,
	\[
		\frac{\Dc_b\left(\frac13\right)}{\Mc_{1,0,0,0}^{[5]}} \le { 0.9072671370}.
	\]

	We conclude that for $\beta_n \le \frac{1}{750}$ and $m+\frac 12\le {3^{-1}\beta_n^{-\frac 32}}$, \eqref{rankdiff} holds. Using \eqref{eq:bL_def}, the condition $\beta_n \le \frac{1}{750}$ translates into $n\ge 925276$. Hence the theorem is established.
\end{proof}

\section{Partition inequalities and proof of \Cref{thm:main}}\label{FinalSection}

{In this section, we prove our main theorem, and thus establish Stanton's Conjecture. In the previous section, we prove positivity of rank differences for small values of $m$.
The following theorem treats unimodality of the rank function  if $m$ is large.

\begin{theorem}\label{thm:m_large}
Let $n\ge 11523$, and $2\sqrt{n}\le m \le n-3$. Then we have 
\begin{equation}\label{RankUniIneq}
N(m,n)-N(m+1,n) \ge 0.
\end{equation}
\end{theorem}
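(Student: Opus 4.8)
The plan is to reduce $N(m,n)-N(m+1,n)$ to an alternating sum of second differences of $p$ and then control it with the sandwich bounds of \Cref{lem:sandwich}. Starting from \eqref{eq:diff_gf} and writing $(q^{-r/2}-q^{r/2})^2=q^{-r}(1-q^r)^2$, so that $H_m(q)=\sum_{r\ge1}(-1)^{r+1}q^{\frac{3r^2}{2}+(m-\frac12)r}(1-q^r)^2$, I would extract coefficients against $\frac{1}{(q;q)_\infty}=\sum_{j\ge0}p(j)q^j$ to obtain
\[
	N(m,n)-N(m+1,n)=\sum_{r\ge1}(-1)^{r+1}P\!\left(A_r,r\right),\qquad A_r:=n-mr-\tfrac{3r(r+1)}{2},
\]
with $P(n,r)$ as in \eqref{eq:wp_def} and the convention $p(k)=0$ for $k<0$. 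The sequence $(A_r)_{r\ge1}$ is strictly decreasing, the terms with $A_r<0$ vanish, and the sum is finite.

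Next, since $p$ is convex on $\N_0$ --- that is, $(p(k+1)-p(k))_{k\ge0}$ is non-decreasing, which follows from \Cref{lem:partition_asymp} for large $k$ together with a finite check --- one has $P(N,r)\ge0$ for all $N\ge0$ and $r\ge1$. Hence the sum above is alternating with non-negative terms, so by the Leibniz criterion it is $\ge0$ as soon as $r\mapsto P(A_r,r)$ is non-increasing (assigning the value $0$ once $A_r<0$, consistently with the vanishing just noted). Thus the theorem reduces to proving $P(A_r,r)\ge P(A_{r+1},r+1)$ for every $r\ge1$, under $2\sqrt n\le m\le n-3$ and $n\ge11523$.

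For this I would use $A_r-A_{r+1}=m+3(r+1)$: whenever $A_{r+1}\ge0$ this gives $A_r\ge m+3(r+1)\ge2\sqrt n+6>100$, so \Cref{lem:sandwich} yields $P(A_r,r)\ge\frac{r^2e^{\pi\sqrt{\frac23(A_r-1/24)}}}{6(A_r+2r-1/24)^2}$. The right-hand side I would bound by cases: if $A_{r+1}<0$ there is nothing to do; if $A_{r+1}\ge100$, apply the upper bound in \Cref{lem:sandwich}; if $0\le A_{r+1}<100$, use instead $P(A_{r+1},r+1)\le p(A_{r+1}+2r+2)\le p(2r+101)$ and \eqref{eq:partition_ubound} (the sandwich upper bound is useless here because of its $(A_{r+1}-1/24)^{-2}$ factor). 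In every case the exponents are compared through the gap $A_r-(A_{r+1}+2r+2)=m+r+1\ge2\sqrt n$, via $\sqrt{a+b}-\sqrt a\ge\frac{b}{2\sqrt{a+b}}$. The structural point is that whenever the ratio of polynomial prefactors is large --- which happens precisely when $A_{r+1}$ is small relative to $m$ --- the quantity $\sqrt{A_r}-\sqrt{A_{r+1}+2r+2}$ is then of order $\sqrt m$, so the exponential factor $e^{\pi\sqrt{2/3}\,(\sqrt{A_r}-\sqrt{A_{r+1}+2r+2})}$ overwhelms it. Tracking constants, the inequality holds for $n\ge11523$; the tightest configuration is $r=1$ with $m$ close to $2\sqrt n$, where the exponential gain is only a fixed constant ($e^{\pi\sqrt{2/3}}\approx13$) and must beat a prefactor ratio $\approx\pi^2/\sqrt3\approx5.7$, which is what pins down the threshold.

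The main obstacle is this last step: assembling \Cref{lem:sandwich}, \eqref{eq:partition_ubound}, and elementary square-root estimates into one clean inequality valid for all $r\ge1$ and all admissible $(m,n)$. The sub-case $0\le A_{r+1}<100$ needs extra care since the sandwich upper bound degenerates there; one falls back on $P\le p(A_{r+1}+2r+2)$ and \eqref{eq:partition_ubound}, and must verify that $A_{r+1}<100$ together with $m\ge2\sqrt n$ (and $n\ge11523$) still forces the left-hand lower bound --- which grows like $e^{\pi\sqrt{2m/3}}/m^2$ --- to dominate the constant $p(2r+101)$.
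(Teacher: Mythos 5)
Your starting identity (your $A_r$ is the paper's $\omega_r$) and the reliance on \Cref{lem:sandwich} match the paper, but your strategy after that is genuinely different. You want to show $r\mapsto P(\omega_r,r)$ is non-negative (via convexity of $p$) and non-increasing, and then conclude by the Leibniz/alternating-series criterion. The paper instead splits the sum at $\omega_r\ge\frac{3n}{4}+\frac{1}{24}$, pairs consecutive terms $(2r-1,2r)$ in the main range, proves each pair contributes non-negatively via \Cref{lem:sandwich} (after discarding at most one positive term), and bounds the small-$\omega_r$ tail sum crudely by $4p\!\left(\left\lfloor\frac{3n}{4}+2\sqrt{\frac{2n}{3}}+\frac{1}{24}\right\rfloor\right)$ times the number of indices, showing this is dominated by the first pair. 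The paper's cutoff is chosen precisely so that the polynomial prefactor ratio $\frac{(\omega_{2r-1}+2(2r-1))^2}{\omega_{2r}^2}<\frac{16}{9}$ stays bounded; your route must instead handle arbitrarily large prefactor ratios when $\omega_{r+1}$ is near $100$, compensating with the exponential gain, which is plausible but more delicate to assemble. In exchange, if the full monotonicity were established, your argument would be conceptually cleaner and would avoid the explicit tail estimate.

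However, there is a concrete gap at the end of the sum. You write ``the terms with $A_r<0$ vanish'' and later ``assigning the value $0$ once $A_r<0$.'' This is not what the identity says: $P(\omega_r,r)=p(\omega_r)-2p(\omega_r+r)+p(\omega_r+2r)$ with $p(k)=0$ for $k<0$, and this vanishes only when $\omega_r+2r<0$. Since $\omega_r-\omega_{r+1}=m+3(r+1)$ is large, there is exactly one boundary index $r^\ast$ with $\omega_{r^\ast}<0\le\omega_{r^\ast}+2r^\ast$, and for it $P(\omega_{r^\ast},r^\ast)$ is not zero; moreover your convexity argument, which requires all three arguments to be $\ge0$, does not show it is non-negative (indeed $P(-1,1)=-1$, although in your range $m\le n-3$ forces $r^\ast\ge2$). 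Replacing that term by $0$ means the alternating sum to which you apply the Leibniz criterion is \emph{not} $N(m,n)-N(m+1,n)$, so the conclusion does not follow as written. To repair this you would either need to prove $P(\omega_{r^\ast},r^\ast)\ge0$ and incorporate it into the monotone chain, or bound that boundary term separately, which is effectively what the paper's tail estimate accomplishes. The convexity observation $P(N,r)\ge0$ for $N\ge0$ is correct and a nice way to cover the window $0\le\omega_r<100$ where \Cref{lem:sandwich} does not apply, but it does not extend to negative arguments.
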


\begin{proof}
	Suppose that $n\ge11523$ and $m\ge2\sqrt{n}$. Define
	\[
		\om_r = \om_r(m,n) := n - \tfrac{3r^2}{2} - \left(m+\tfrac32\right)r.
	\]
	Using \eqref{eq:diff_gf} and the generating function for partitions it follows immediately that
	\begin{equation} \label{eq:Ndiff_partition}
		N(m,n) - N(m+1,n) = \sum_{r=1}^\infty (-1)^{r+1} (p(\omega_r)-2p(\omega_r+r)+p(\omega_r+2r)) = \sum_{r=1}^\infty (-1)^{r+1}P(\omega_r,r),
	\end{equation}
	where $P(\om_r,r)$ is given in \eqref{eq:wp_def}. Note that the sum above is actually finite.
	
	Now we split the theorem into a number of cases, which we consider individually.
	\begin{enumerate}[label={(\arabic*)},wide,labelwidth=!,labelindent=0pt]
		\item Suppose that $n-102 \le m \le n-3$. Then we have
		\[
			N(m,n) - N(m+1,n) = P(\om_1,1).
		\]
		We have $\om_1\le99$ and we check directly that $P(k,1)\ge0$ for $0\le k\le99$.
		
		\item Suppose that $\frac n2-2 \le m \le n-103$. Again we have
		\[
			N(m,n) - N(m+1,n) = P(\om_1,1).
		\]
		In this case we have $\om_1\ge100$, so \Cref{lem:sandwich} applies and we have
		\[
			P(\om_1,1) \ge \frac{e^{\pi\sqrt{\frac23\left(\om_1-\frac{1}{24}\right)}}}{6\left(\om_1+2-\frac{1}{24}\right)^2} > 0.
		\]
		
		\item Suppose that $\frac n2-54 \le m \le \frac n2-\frac 52$. Then we have
		\[
			N(m,n) - N(m+1,n) = P(\om_1,1) - P(\om_2,2).
		\]
		We have $\om_1\ge100$ and $\om_2\le99$ for $n\ge11523$. Applying \Cref{lem:sandwich} to $P(\om_1,1)$, we obtain
		\[
			P(\om_1,1) \ge \frac{e^{\pi\sqrt{\frac23\left(\om_1-\frac{1}{24}\right)}}}{6\left(\om_1+2-\frac{1}{24}\right)^2} \ge \frac{e^{\pi\sqrt{\frac23\left(\frac n2-\frac12-\frac{1}{24}\right)}}}{6\left(\frac n2+53-\frac{1}{24}\right)^2}.
		\]
		Meanwhile, we directly compute that $|P(\om_2,2)|\le11516573$ for $\om_2\le99$. Using these bounds, {we check the claimed inequality  \eqref{RankUniIneq}} {for $n\ge11523$ by a computer verification.} 
		
		\item Suppose that $\frac n3-\frac{11}{3} \le m \le \frac n2 - \frac{109}2$. Again we have 
		\[
			N(m,n) - N(m+1,n) = P(\om_1,1) - P(\om_2,2).
		\]
		In this case, we have $\om_1,\om_2\ge100$ so \Cref{lem:sandwich} applies for both $P(\om_1,1)$ and $P(\om_2,2)$. We obtain 
		\begin{align*}
			P(\om_1,1) &\ge \frac{e^{\pi\sqrt{\frac23\left(\om_1-\frac{1}{24}\right)}}}{6\left(\om_1+2-\frac{1}{24}\right)^2} \ge \frac{e^{\pi\sqrt{\frac23\left(\frac n2+\frac{103}{2}-\frac{1}{24}\right)}}}{6\left(\frac{2n}{3}+\frac83-\frac{1}{24}\right)^2},\\
			P(\om_2,2) &\le \frac{\pi^2e^{\pi\sqrt{\frac23\left(\om_2+4-\frac{1}{24}\right)}}}{6\sqrt{3}\left(\om_2-\frac{1}{24}\right)^2} \le \frac{\pi^2e^{\pi\sqrt{\frac23\left(\frac n3+\frac73-\frac{1}{24}\right)}}}{6\sqrt{3}\left(100-\frac{1}{24}\right)^2}.
		\end{align*}
		Using these bounds, {we numerically check \eqref{RankUniIneq} for $n\ge11523$.}
	\end{enumerate}
	
	We repeat these computations with $m$ in the ranges given below. The strategy for   $\om_r\ge100$ is as follows. We recall that we aim to give a lower bound on \eqref{eq:Ndiff_partition}, an alternating sum of the numbers $P(\om_r,r)$. Due to the presence of these alternating signs, we either require an upper or lower bound on $P(\om_r,r)$, which depends on the parity of $r$. \Cref{lem:sandwich} gives both an upper and lower bound on $P(\om_r,r)$, and for each value of $r$ we use the appropriate one. That is, we use the lower bound for odd values of $r$ and we use the upper bound there for the even values of $r$. For the smaller values $\om_r\le99$, we compute $\max_{-2r\le k\le99}|P(\om_r,r)|$, and treat them as error terms (regardless of the parity of $r$). Under the assumption that $n\ge11523$, we check by computer that the following ranges are all nonempty, and that there is at most one term for which \Cref{lem:sandwich} does not apply. Then {we check \eqref{RankUniIneq} by computer} for $n\ge11523$ for these cases. 
	\begin{enumerate}[label={(\arabic*)},wide,labelwidth=!,labelindent=0pt]
		\item If $\frac n3-39\le m\le\frac n3-4$, then \eqref{eq:Ndiff_partition} has $3$ terms, and \Cref{lem:sandwich} is not applicable to $P(\om_3,3)$.
		
		\item If $\frac n4-\frac{21}{4}\le m\le\frac n3-\frac{118}{3}$, then \eqref{eq:Ndiff_partition} has $3$ terms, and \Cref{lem:sandwich} is always applicable.
		
		\item If $\frac n4-\frac{129}{4}\le m\le\frac n4-\frac{11}{2}$, then \eqref{eq:Ndiff_partition} has $4$ terms, and \Cref{lem:sandwich} is not applicable to $P(\om_4,4)$.
		
		\item If $\frac n5-\frac{34}{5}\le m\le\frac n4-\frac{65}{2}$, then \eqref{eq:Ndiff_partition} has $4$ terms, and \Cref{lem:sandwich} is always applicable.
		
		\item If $\frac n5-\frac{144}{5}\le m\le\frac n5-7$, then \eqref{eq:Ndiff_partition} has $5$ terms, and \Cref{lem:sandwich} is not applicable to $P(\om_5,5)$.
		
		\item If $\frac n6-\frac{25}{3}\le m\le\frac n5-29$, then \eqref{eq:Ndiff_partition} has $5$ terms, and \Cref{lem:sandwich} is always applicable.
		
		\item If $\frac n6-27\le m\le\frac n6-\frac{17}{2}$, then \eqref{eq:Ndiff_partition} has $6$ terms, and \Cref{lem:sandwich} is not applicable to $P(\om_6,6)$.
		
		\item If $\frac n7-\frac{69}{7}\le m\le\frac n6-\frac{163}{6}$, then \eqref{eq:Ndiff_partition} has $6$ terms, and \Cref{lem:sandwich} is always applicable.
		
		\item If $\frac n7-\frac{183}{7}\le m\le\frac n7-10$, then \eqref{eq:Ndiff_partition} has $7$ terms, and \Cref{lem:sandwich} is not applicable to $P(\om_7,7)$.
		
		\item If $\frac n8-\frac{91}{8}\le m\le\frac n7-\frac{184}{7}$, then \eqref{eq:Ndiff_partition} has $7$ terms, and \Cref{lem:sandwich} is always applicable.
	\end{enumerate}
	For all the cases above, {we verify \eqref{RankUniIneq} by computer} if the said conditions are satisfied.

	It remains to consider the case where $2\sqrt{n} \le m\le\frac n8-\frac{23}{2}$. We split the sum \eqref{eq:Ndiff_partition} into two parts:
	\begin{equation}\label{eq:partition_split}
		\sum_{r=1}^\infty (-1)^{r+1}P(\om_r,r) = \sum_{\substack{r\ge1\\\om_r\ge\frac{3n}{4} + \frac 1{24}}} (-1)^{r+1}P(\om_r,r) + \sum_{\substack{r\ge1\\\om_r<\frac{3n}{4} + \frac 1{24}}} (-1)^{r+1}P(\om_r,r).
	\end{equation}
	Consider the terms (and discarding at most one positive term), we have that
	\begin{align}\label{eq:partition_paired}
		\sum_{\substack{r\ge1\\\om_r\ge\frac{3n}{4}+\frac{1}{24}}} (-1)^{r+1}P(\om_r,r) &\ge \sum_{\substack{r\ge1\\\om_{2r}\ge\frac{3n}{4}+\frac{1}{24}}} (P(\om_{2r-1},2r-1)-P(\om_{2r},2r))\\
		\nonumber
		&\ge \sum_{\substack{r\ge1\\\om_{2r}\ge\frac{3n}{4}+\frac{1}{24}}} \left(\frac{(2r-1)^2e^{\pi\sqrt{\frac23\left(\om_{2r-1}-\frac{1}{24}\right)}}} {6\left(\om_{2r-1}+4r-2-\frac{1}{24}\right)^2} - \frac{\pi^2(2r)^2e^{\pi\sqrt{\frac23\left(\om_{2r}+4r-\frac{1}{24}\right)}}} {24\sqrt{3}\left(\om_{2r}-\frac{1}{24}\right)^2}\right),
	\end{align}
	applying \Cref{lem:sandwich} for the last inequality. Note that $\om_2=n-2m-9\ge\frac{3n}{4}+4\ge\frac{3n}{4}+\frac{1}{24}$, so this sum is always nonempty. In order to have each summand being positive, we need 
	\begin{equation}\label{eq:partition_comparer}
		e^{\pi\sqrt{\frac23}\rb{\sqrt{\om_{2r-1}-\frac{1}{24}}-\sqrt{\om_{2r}+4r-\frac{1}{24}}}} \ge \frac{\pi^2}{4\sqrt{3}} \frac{(2r)^2}{(2r-1)^2} \frac{\left(\om_{2r-1}+2(2r-1)-\frac{1}{24}\right)^2}{\left(\om_{2r}-\frac{1}{24}\right)^2}.
	\end{equation}
	To see this, note that
	\begin{equation}\label{eq:169}
		\frac{\left(\om_{2r-1}+2(2r-1)-\frac{1}{24}\right)^2}{\left(\om_{2r}-\frac{1}{24}\right)^2} < \frac{16}{9}.
	\end{equation}
	{Since $\frac{(2r)^2}{(2r-1)^2} \le 4$ for $r\ge 1$, we deduce that the right-hand side of \eqref{eq:partition_comparer} is bounded from above by $\frac{16\pi^2}{9\sqrt{3}}$.}
	On the other hand, using the definition of $\om_r$, we compute
	\begin{equation}\label{eq:rmr}
		\sqrt{\omega_{2r-1}-\frac 1{24}} - \sqrt{\omega_{2r}+4r-\frac 1{24}} \ge \frac{2r+m}{2\sqrt{n}}.
	\end{equation}
	Hence, if $m\ge 2\sqrt{n}$, then the left-hand side of \eqref{eq:partition_comparer} is bounded from below by {$e^{\pi\sqrt{\frac23}}$. Meanwhile, by direct computation we have that $e^{\pi\sqrt{\frac23}} > \frac{16\pi^2}{9\sqrt{3}}$. So the inequality \eqref{eq:partition_comparer} holds.} 

	Now we look at the second sum in \eqref{eq:partition_split}. In the following we show that this sum is bounded by the term $r=1$ on the right-hand side in \eqref{eq:partition_paired}. This then implies the non-negativity of \eqref{eq:partition_split}, and proves the theorem in this case. We first make some observations.
	\begin{enumerate}[label={(\arabic*)},wide, labelwidth=!,labelindent=0pt]
		\item If $n\ge11523$, $m\ge2\sqrt{n}$ and $\om_r+2r>0$, then we have $r\le\sqrt{\frac{2n}{3}}$.
		
		\item The partition function $p(n)$ is monotonically increasing.
	\end{enumerate}
	Using the observations above, we obtain a trivial bound
	\begin{align}
		\sum_{\substack{r\ge 1\\ \omega_r < \frac {3n}4+\frac 1{24}}} \vb{P(\omega_r,r)} &\le \sum_{\substack{r\ge 1\\ \omega_r < \frac {3n}4+\frac 1{24}}} \rb{p(\omega_r) + {2}p(\omega_r+r) + p(\omega_r+2r)}\nonumber\\
		&\le \sqrt{\frac{2n}{3}} \cdot 4 \cdot p\rb{\left\lfloor\tfrac{3n}{4}+2\sqrt{\tfrac{2n}{3}} + \tfrac 1{24} \right\rfloor} \le \frac{\sqrt{2n}\cdot e^{\pi\sqrt{\frac23\left(\frac{3n}{4}+2\sqrt{\frac{2n}{3}}\right)}}}{3\left(\frac{3n}{4} + 2\sqrt{\frac{2n}{3}}\right)},\label{eq:Psum_prebound}
	\end{align}
	the final inequality following from \eqref{eq:partition_ubound}.
	
	Now we show that this error is bounded by the term $r=1$ on the right-hand side in \eqref{eq:partition_paired}. Using \eqref{eq:169} and \eqref{eq:rmr}, the first term on the right-hand side of \eqref{eq:partition_paired} is bounded below by
	\[
		\frac{e^{\pi\sqrt{\frac23\left(\om_1-\frac{1}{24}\right)}}}{6\left(\om_1+2- \frac{1}{24}\right)^2} \left(1-\frac{16\pi^2}{9\sqrt{3}}e^{-\pi\sqrt{\frac23}}\right).
	\]
	Now recall the assumption $2\sqrt{n}\le m\le\frac n8-\frac{23}{2}$. By minimizing the exponent and maximizing the denominator, we obtain a lower bound
	\[
		\frac{e^{\pi\sqrt{\frac23\left(\om_1-\frac{1}{24}\right)}}}{6\left(\om_1+2- \frac{1}{24}\right)^2} \left(1-\frac{16\pi^2}{9\sqrt{3}}e^{-\pi\sqrt{\frac23}}\right) \ge \frac{e^{\pi\sqrt{\frac23\left(\frac{7n}{8}+\frac{17}{2}-\frac{1}{24}\right)}}} {6\left(n-2\sqrt{n}-1-\frac{1}{24}\right)^2} \left(1-\frac{16\pi^2}{9\sqrt{3}}e^{-\pi\sqrt{\frac23}}\right).
	\]
	By direct computation, we check that for $n\ge 11523$,
	\[
		\frac{\sqrt{2n}\cdot e^{\pi\sqrt{\frac{{\frac{3n}{4}+2\sqrt{\frac{2n}{3}}}}{3}}}}{3\left(\frac{3n}{4}+ 2\sqrt{\frac{2n}{3}}\right)} \le \frac{e^{\pi\sqrt{\frac23\left(\frac{7n}{8}+\frac{17}{2}- \frac{1}{24}\right)}}}{6\left(n-2\sqrt{n}-1-\frac{1}{24}\right)^2} \left(1-\frac{16\pi^2}{9\sqrt{3}}e^{-\pi\sqrt{\frac23}}\right).
	\]
	 Hence the second sum in \eqref{eq:partition_split} is bounded by the first sum in \eqref{eq:partition_split}, which necessarily exists (see the line after \eqref{eq:partition_paired}) and is positive. This yields the claim.
\end{proof}

We now combine the small and large $m$ bounds to prove \Cref{thm:main}.

\begin{proof}[Proof of \Cref{thm:main}]
	 { By \Cref{thm:m_small} the claim holds for $n\ge 925276$ and} { $0\le m \le  {3^{-1}\beta_n^{-\frac 32}} - \frac 12$. On the other hand, by \Cref{thm:m_large} for $n\ge 11523$ and $2\sqrt{n}\le m \le n-3$. Observing that $2\sqrt{n}\le{3^{-1}\b_n^{-\frac32}}-\frac12$ for $n\ge925276$, we see that it remains to check the following cases:}
	\begin{enumerate}[label={(\arabic*)},wide, labelwidth=!,labelindent=0pt]
		\item $n\le 11522$, $0\le m \le n-3$,
		
		\item $11523 \le n \le 925275$, $0\le m < 2\sqrt{n}$.
	\end{enumerate}
{This is a large, but finite number of cases to check. This can be performed as follows.} An efficient way to do this is via the identity \eqref{eq:Ndiff_partition}, which computes the difference $N(m,n)-N(m+1,n)$ in $O(n^\frac12)$ operations from the partition function $p(n)$, which is widely accessible as a list and easily computable. {Performing this computer check completes the proof of the theorem.}
\end{proof}


\providecommand{\bysame}{\leavevmode\hbox to3em{\hrulefill}\thinspace}
\providecommand{\MR}{\relax\ifhmode\unskip\space\fi MR }
\providecommand{\MRhref}[2]{%
  \href{http://www.ams.org/mathscinet-getitem?mr=#1}{#2}
}
\providecommand{\href}[2]{#2}

\end{document}